\numberwithin{equation}{section}
\numberwithin{figure}{section}
\theoremstyle{plain}
\newtheorem{theorem}{Theorem}[section]
\newtheorem{lemma}[theorem]{Lemma}
\newtheorem{corollary}[theorem]{Corollary}
\newtheorem{proposition}[theorem]{Proposition}
\theoremstyle{remark}
\newtheorem{remark}[theorem]{Remark}
\newcommand{\dint}{\textup{d}}
\newcommand{\cl}{\mathop{\mathrm{cl}}\nolimits}
\def\BB{\mathbb{B}}
\def\CC{\mathbb{C}}
\def\EE{\mathbb{E}}
\def\NN{\mathbb{N}}
\def\PP{\mathbb{P}}
\def\RR{\mathbb{R}}
\def\SS{\mathbb{S}}
\def\cC{\mathcal{C}}
\def\cD{\mathcal{D}}
\def\cF{\mathcal{F}}
\def\cL{\mathcal{L}}
\newcommand{\dd}{{\rm d}}
\newcommand{\conv}{\mathop{\mathrm{conv}}\nolimits}
\newcommand{\pow}{\mathop{\mathrm{pow}}\nolimits}
\newcommand{\dist}{\mathop{\mathrm{dist}}\nolimits}
\newcommand{\aff}{\mathop{\mathrm{aff}}\nolimits}
\newcommand{\proj}{\mathop{\mathrm{proj}}\nolimits}
\newcommand{\inter}{\operatorname{int}}
\newcommand{\Vol}{\operatorname{Vol}}
\newcommand{\VV}{\operatorname{\mathbb{V}ar}}
\renewcommand{\Re}{\operatorname{Re}}
\begin{document}

\title{\bfseries The $\beta$-Delaunay tessellation III:\\ Kendall's problem and limit theorems in high dimensions}

\author{Anna Gusakova\footnotemark[1],\; Zakhar Kabluchko\footnotemark[2],\; and Christoph Th\"ale\footnotemark[3]}

\date{}
\renewcommand{\thefootnote}{\fnsymbol{footnote}}
\footnotetext[1]{Ruhr University Bochum, Germany. Email: anna.gusakova@rub.de}

\footnotetext[2]{M\"unster University, Germany. Email: zakhar.kabluchko@uni-muenster.de}

\footnotetext[3]{Ruhr University Bochum, Germany. Email: christoph.thaele@rub.de}

\maketitle

\begin{abstract}
The $\beta$-Delaunay tessellation in $\RR^{d-1}$ is a generalization of the classical Poisson-Delaunay tessellation. As a first result of this paper we show that the shape of a weighted typical cell of a $\beta$-Delaunay tessellation, conditioned on having large volume, is close to the shape of a regular simplex in $\RR^{d-1}$. This generalizes earlier results of Hug and Schneider about the typical (non-weighted) Poisson-Delaunay simplex. Second, the asymptotic behaviour of the volume of weighted typical cells in high-dimensional $\beta$-Delaunay tessellation is analysed, as $d\to\infty$. In particular, various high dimensional limit theorems, such as quantitative central limit theorems as well as moderate and large deviation principles, are derived.  

\bigskip

\noindent {\bf Keywords}. {Beta-Delaunay tessellation, central limit theorem, cumulant method, large deviations, moderate deviations, mod-phi convergence, Kendall's problem, stochastic geometry, typical cell, weighted typical cell.}\\
{\bf MSC(2010)}. 52A22, 52A40, 60D05, 60F05, 60F10.
\end{abstract}

\section{Introduction}

The classical Poisson-Delaunay tessellation in $\RR^{d-1}$, which is widely used in stochastic modelling of spatial random structures \cite{AurenhammerKlein,BlaszEtAl,Haenggi,OkabeEtAl,PreparataShamos,SKM}, is constructed as follows. The starting point is a stationary Poisson point process $\eta$ in $\RR^{d-1}$ of intensity $\gamma>0$. For $d$ distinct points $x_1,\ldots,x_d$ of $\eta$ we consider the almost surely uniquely determined ball having $x_1,\ldots,x_d$ on its boundary. If this ball contains no points of $\eta$ in its interior, the convex hull $\conv(x_1,\ldots,x_d)$ of $x_1,\ldots,x_d$ becomes a so-called Delaunay simplex. The collection of all Delaunay simplices is a stationary random simplicial tessellation of $\RR^{d-1}$,  which is called Poisson-Delaunay tessellation. To explain the construction of a $\beta$-Delaunay tessellation, we fix a parameter $\beta>-1$ and consider a Poisson point process $\eta_\beta$ in the height- or time-augmented space $\RR^{d-1}\times\RR_{+}$ whose intensity measure $\mu_\beta$ is the Lebesgue measure in the spatial coordinate $\RR^{d-1}$ and has density $h^\beta$ in the height or time coordinate $\RR_{+}$, up to an intensity parameter $\gamma c_{d,\beta}$. Formally, $\mu_\beta$ satisfies
$$
\int_{\RR^{d-1}\times\RR_{+}}f(v,h)\,\mu_\beta(\dint(v,h)) = \gamma c_{d,\beta}\int_{\RR^{d-1}}\int_{\RR_{+}}f(v,h)\,h^\beta\,\dint h\dint v
$$
for every non-negative measurable function $f:\RR^{d-1}\times\RR_{+}\to\RR$. Now, given $d$ distinct points $x_1=(v_1,h_1),\ldots,x_d=(v_d,h_d)$ of $\eta_\beta$, there is an almost surely unique translate of the standard downward paraboloid $\left\{(v,h)\in\RR^{d-1}\times\RR\colon h\leq -\|v\|^2\right\}$ containing the points $x_1,\ldots,x_d$ on its boundary. The random simplex $\conv(v_1,\ldots,v_d)$ in $\RR^{d-1}$, which is formed by the spatial projection of the points $x_1,\ldots,x_d$, is a $\beta$-Delaunay simplex if and only if the interior of the downward paraboloid determined by $x_1,\ldots,x_d$ does not contain any point of $\eta_{\beta}$. The collection of all $\beta$-Delaunay simplices is again a stationary random simplicial tessellation, which is called the $\beta$-Delaunay tessellation in $\RR^{d-1}$ and was introduced and studied in part I of this series of articles \cite{Part1}. {As alreday mentioned in \cite{Part1}, the $\beta$-Delaunay tessellation describes the local asymptotic structure of so-called beta random polytopes in the $d$-dimensional unit ball close to its boundary. Namely, after a suitable rescaling the boundary of the unit ball locally `looks' like $\RR^{d-1}$ and the boundary of the beta random polytope, projected to the unit sphere, locally `looks' like the $\beta$-Delaunay tessellation. We also mention that the class of beta random polytopes has recently been given special attention in the stochastic geometry literature as they provide a common link between a number of stochastic geometry models, see e.g.\ \cite{KTT,KTZ20} and the references given therein.}

In the focus of this paper is the $\nu$-weighted typical cell $Z_{\beta,\nu}$ of the $\beta$-Delaunay tessellation, which is almost surely a $(d-1)$-dimensional simplex, the so-called $\nu$-weighted typical $\beta$-Delaunay simplex, where $\nu>-1$ is a weight parameter. We recall that, on an intuitive level, the typical cell is a random simplex picked uniformly at random from the infinite collection of all $\beta$-Delaunay simplices, regardless of size and shape. If the cells are weighted according to the $\nu$th power of their volume and a simplex is now chosen according to these weights, one arrives at the $\nu$-weighted typical $\beta$-Delaunay simplex. We will make this mathematically rigorous using the concept of Palm distribution in Section \ref{sec:TypCellDef} below. Let us remark that for $\nu=0$ we get back the typical $\beta$-Delaunay simplex, while for $\nu=1$ the $\nu$-weighted typical cell has, up to translation, the same distribution as the almost surely uniquely determined cell containing the origin of $\RR^{d-1}$ (i.e., the zero cell).

In the first part of this paper we study the following question: What is the 'shape' of the $\nu$-weighted typical cell $Z_{\beta,\nu}$, conditionally on the event that its volume is large? For the stationary and isotropic Poisson line tessellation in the plane an analogous question goes back to D.G.\ Kendall (see, for example, the preface of \cite{SKM}) and has first been studied by Kovalenko \cite{Kovalenko}. Subsequently, this has triggered substantial interest in stochastic geometry and a number of variations of Kendall's problem have been investigated for various random tessellation models, we refer to \cite{HugSchneiderGAFA} for a rather general result, and the survey articles \cite{HugReitznerIntroSG,SchneiderSurvey} for an overview. {Most relevant in our context are the works of Hug and Schneider \cite{HugSchneiderDelaunay,HSDelaunay2}, where (general versions of) Kendall's problem has been studied for the typical cell of a classical Poisson-Delaunay tessellation in $\RR^{d-1}$.} It has been proven there that the shape of the typical Poisson-Delaunay simplex, conditionally on having a large volume, is close to that of a regular simplex in $\RR^{d-1}$. Our first result shows, in a quantitative way, that the same phenomenon can be observed for the broad class of $\nu$-weighted typical $\beta$-Delaunay simplices $Z_{\beta,\nu}$. For example, if $\rho(Z_{\beta,\nu})$ stands for a suitable measurement for the distance of $Z_{\beta,\nu}$ to a $(d-1)$-dimensional simplex, which we formally introduce in Section \ref{sec:KendallBeta} below, we show in Theorem \ref{thm:Kendall} that
$$
\PP(\rho(Z_{\beta,\nu})\geq\varepsilon\,|\,\Vol(Z_{\beta,\nu})\geq a) \leq C\,\exp\Big\{{-c\,\varepsilon^2\,a^{d+1+2\beta\over d-1}}\Big\}
$$
for any $\varepsilon\in(0,1)$ and suitable constants $C,c\in(0,\infty)$. In particular, formally taking $\beta=-1$ and $\nu=0$, we recover the main result of \cite{HugSchneiderDelaunay} for the typical Poisson-Delaunay simplex. {We also study in Theorem \ref{thm:VolumeTails} the tail behaviour of the random variable $\Vol(Z_{\beta,\nu})$ at zero and infinity, which turns out to be describable by a suitable polynomial and an exponential function, respectively.}

In the second part of this paper we study the logarithmic volume of the $\nu$-weighted typical $\beta$-Delaunay simplex $Z_{\beta,\nu}$ in high dimensions, that is, as the dimension $d$ of the ambient space increases, namely $d\to\infty$. The motivation for such a study is driven by the fact that probabilistic limit theorems for convex bodies in high dimensions is a central theme in the branch of mathematics called Asymptotic Geometric Analysis. The behaviour of the logarithmic volume of \textit{random} convex bodies and especially of random simplices in high dimensions has recently been studied in stochastic geometry in \cite{OberwolfachSimplices,GKT17,GusakovaThaeleDelaunay}. We continue this line of research by providing central limit theorems as well as moderate and large deviation principles for $Y_{\beta, \nu, d}:=\log\Vol(Z_{\beta,\nu})$, as $d\to\infty$. For example, for fixed $\nu\geq -1$, $\beta>-1$ and as $d\to\infty$, we show that
\begin{align*}
	\EE Y_{\beta,\nu,d}&=-d\log d+{3+2\nu\over 4}\log d+{d\over 2}+O(1),\\
	\VV Y_{\beta,\nu,d}&={1\over 2}\log d+C_{\nu}+O(1/d),
\end{align*}
for some explicit constant $C_\nu\in(0,\infty)$ only depending on $\nu$, see Corollary \ref{cor:CumulantsTypicalBeta}. Moreover, putting
$$
\widetilde Y_{\beta, \nu, d}:=\frac{Y_{\beta, \nu, d}-\EE Y_{\beta, \nu, d}}{\sqrt{\VV Y_{\beta, \nu, d}}},
$$
the following quantitative central limit theorem holds, where $c\in(0,\infty)$ is some constant depending on the model parameters $\beta$ and $\nu$, see Theorem \ref{thm:Main}:
$$
\sup_{y\in\RR}|\PP(\widetilde Y_{\beta, \nu, d}\leq y)-\Phi(y)| \leq {c\over\sqrt{\log d}}.
$$
We emphasize that in the special case $\beta=-1$, which corresponds to the classical Poisson-Delaunay tessellation, this covers previous results from \cite{GusakovaThaeleDelaunay}. We also remark in this context that central limit theorems for other functionals of $\beta$-Delaunay tessellations for fixed space dimensions but in increasing observation windows will be derived in part IV of this paper.

\bigskip

The remaining parts of the text are structured as follows. In Section \ref{sec:Preliminaries} we recall some necessary background material in order to make this paper self-contained. In particular, we rephrase there the construction of the $\beta$-Delaunay tessellation as well as definition of typical weighted $\beta$-Delaunay simplices. Kendall's problem for such random simplices is studied in Section \ref{sec:KendallBeta}, whereas in Section \ref{sec:LimitTheorems} we concentrate on central limit theorems and moderate and large deviations for the logarithmic volume of typical weighted $\beta$-Delaunay simplices in high dimensions.

\section{Preliminaries about the $\beta$-Delaunay tessellation}\label{sec:Preliminaries}

\subsection{Notation and set-up}

In this paper we use the following notation. Given a set $A\subset\RR^{d-1}$, $d\ge 2$ we denote by $\conv(A)$ its convex hull and by ${\rm int}(A)$ its topological interior. A centred closed Euclidean unit ball in $\RR^{d-1}$ is denoted by $\BB^{d-1}$ and its volume is given by
$
\kappa_d:=\frac{\pi^{d/2}}{\Gamma(1+{d\over 2})}.
$
By $\RR_{+}:=[0,\infty)$ we denote the set of all non-negative real numbers. We shall represent points $x\in\RR^d$ in the form $x=(v,h)$ with $v\in\RR^{d-1}$ (called \textit{spatial coordinate}) and $h\in\RR$ (called \textit{height}, \textit{weight} or \textit{time coordinate}). 

We start by recalling the definition and the main properties of $\beta$-Delaunay tessellations. The first description of $\beta$-Delaunay tessellation was given in \cite{Part1}. Fundamental facts about Poisson point processes and tessellations, which we will omit here, can be found in \cite{SW, LP}. Consider a Poisson point process $\eta_{\beta}$ in $\RR^{d-1}\times \RR_{+}$ with intensity measures having density
$$
\gamma\,c_{d,\beta}\,h^{\beta},\qquad c_{d,\beta}={\Gamma({d\over 2}+\beta+1)\over \pi^{d\over 2}\Gamma(\beta+1)},\, \, \gamma > 0,
$$
with respect to the Lebesgue measure on $\RR^{d-1}\times\RR_{+}$.

According to \cite{Part1} there exist two alternative ways to construct $\beta$-Delaunay tessellations based on the Poisson processes $\eta_\beta$. The first construction uses the notion of Laguerre tessellations and it is more convenient for defining the tessellation and investigating the properties of the typical cell. The second construction is defined via the paraboloid hull process, introduced in \cite{CSY13, SY08}, and it is used for studying the convergence of $\beta$-tessellations as $\beta\to\infty$ (see \cite{Part2}) and its mixing properties in part IV of this paper. We will consider only the approach based on Laguerre tessellations here, for more details regarding the second approach we refer reader to \cite[Section 3.5]{Part1} and \cite[Section 3.1]{Part2}.  

\subsection{Construction}\label{sec:Laguerre_tess}

One of the most well studied type of tessellations is the classical Voronoi tessellation, see, for example, \cite{OkabeEtAl,SKM,SW}. A Laguerre tessellation can be considered as a generalized (or weighted) version of a Voronoi tessellation and was intensively studied in \cite{LZ08, Ldoc, Sch93}. In this subsection we only briefly recall some facts about Laguerre tessellations. For more details we refer the reader to part I of this paper, especially to \cite[Section 3.2 - 3.4]{Part1}.

The construction of a Laguerre diagram is based on the notion of a power function. For $v,w \in \RR^{d-1}$ and $W\in\RR$ we define the power of $w$ with respect to the pair $(v,W)$ as
\[
\pow (w,(v,W)):=\|w-v\|^2+W.
\]
In this situation $W$ is refereed as a weight of the point $v$. Let $X\subset\RR^{d-1}\times\RR$ be a countable set of marked points in $\RR^{d-1}$ such that $\min_{(v,W)\in X}\pow(w,(v,W))$ exists for each $w\in\RR^{d-1}$. Then the Laguerre cell of $(v,W)\in X$ is defined as
\[
C((v,W),X):=\{w\in\RR^{d-1}\colon \pow(w,(v,W))\leq \pow(w,(v',W'))\text{ for all }(v',W')\in X\}.
\]
We emphasize that it is not necessarily the case that a Laguerre cell is non-empty or that it contains interior points. The collection of all Laguerre cells of $X$ having non-vanishing topological interior is called the Laguerre diagram
\[
\cL(X):=\{C((v,W),X)\colon (v,W)\in X, {\rm int}(C((v,W),X))\neq\varnothing\}.
\]
It should be mentioned that Laguerre diagram is not necessarily a tessellation, since the latter strongly depends on the geometric properties of the set $X$. However, it was shown in \cite{Part1} that $\cL(\eta_{\beta})$ for $\beta>-1$ is indeed random normal tessellation. 

Let $\cL^*(\eta_{\beta})$ be the dual tessellation of $\cL(\eta_{\beta})$. This tessellation arises from $\cL(\eta_{\beta})$ by including for distinct points $x_1=(v_1,h_1),\ldots,x_d=(v_d,h_d)$ of $\eta_{\beta}$ the simplex $\conv(v_1,\ldots,v_d)$ in $\cL^*(\eta_{\beta})$ if and only if the Laguerre cells corresponding to $(v_1, h_1),\ldots,(v_d, h_d)$ all have non-empty interior and share a common point. In our case $\cL^*(\eta_{\beta})$ is almost surely a stationary random simplicial tessellation, and moreover it can be regarded as a Laguerre tessellation of the random set 
\begin{equation}\label{eq:ApexProcess}
\eta_{\beta}^{*}:=\left\{(z,K_{z})\in\RR^{d-1}\times\RR\colon z\in \cF_0(\cL(\eta_{\beta}))\right\},
\end{equation}
where $\cF_0(\cL(\eta_{\beta}))$ denote a set of vertices of the tessellation $\cL(\eta_{\beta})$ and $K_{z}$ is a constant, such that $z\in \cF_0(C((v,h),\eta_{\beta}))$ if and only if $\pow(z, (v,h))=K_{z}$ and there is no $(v,h)\in\eta_{\beta}$ with $\pow(z, (v,h))<K_{z}$, see \cite{Part1} for details. The random tessellation $\cD_\beta:=\cL^*(\eta_{\beta})$, $\beta>-1$ is called the \textbf{$\beta$-Delaunay tessellation} in $\RR^{d-1}$, see Figure \ref{fig:betatess} for two simulations.

\begin{figure}
\centering
\includegraphics[width=0.45\columnwidth]{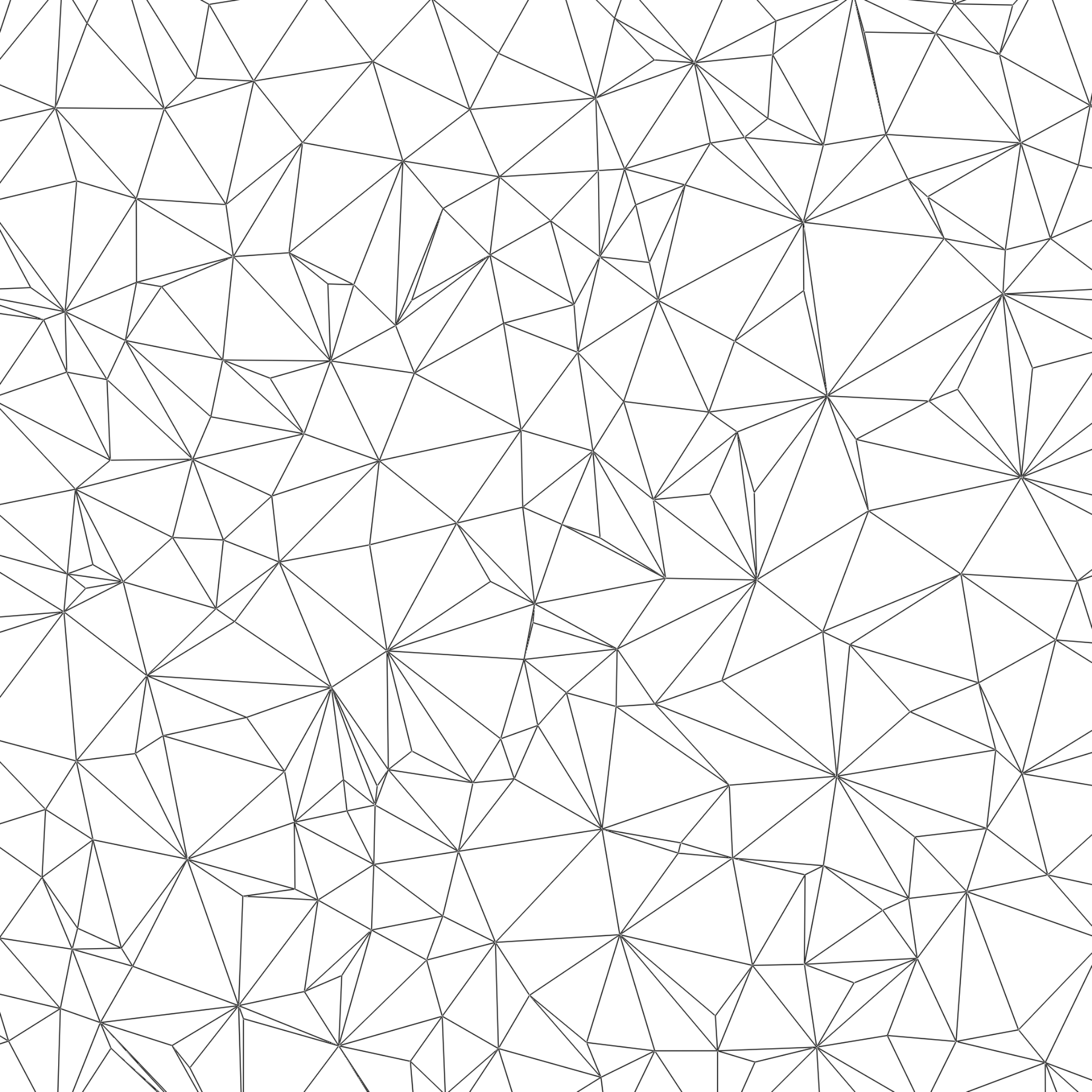}\qquad
\includegraphics[width=0.45\columnwidth]{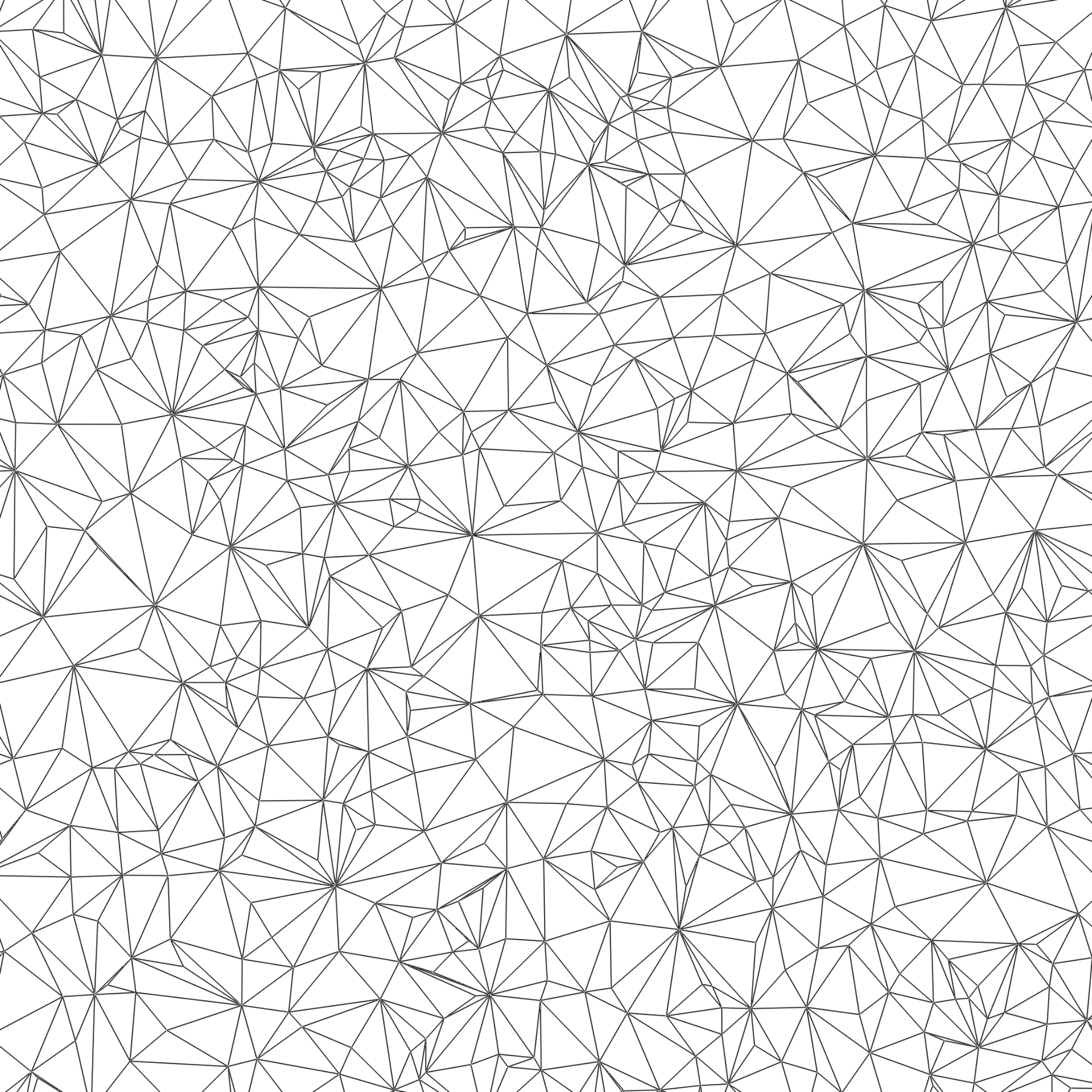}
\caption{Simulation of $\beta$-Delaunay tessellations in the plane with $\beta=5$ (left) and $\beta=15$ (right).}
\label{fig:betatess}
\end{figure}

\subsection{The $\nu$-weighted typical cell}\label{sec:TypCellDef}

The formal definition of the $\nu$-weighted typical cell for the family of random tessellation $\cL^{*}(\xi)$, where $\xi$ is a Poisson point process satisfying some natural assumptions was given in \cite{Part1}. These requirements are met, for example, by the Poisson point processes $\eta_\beta$ for $\beta>-1$. The definition relies on the concept of generalized centre functions and Palm calculus for marked point processes (see \cite[p.\ 116]{SW} and \cite[Section 4.3]{SWGerman}), which is a standard approach in this case. In this subsection we will briefly recall this definition as well as some results regarding the distribution of the $\nu$-weighted typical cells of the $\beta$-Delaunay tessellation. More details can be found in \cite[Section 4]{Part1}.

As explained in the previous section, the random tessellation $\cD_{\beta}$ coincides with the Laguerre tessellation of the random set $\eta_\beta^{*}$, described by \eqref{eq:ApexProcess}. 
We consider the random marked point process
\[
\chi_{\beta}:=\sum\limits_{(v,h)\in\eta_\beta^{*}}\delta_{(v,M)},\qquad M:=C((v,h), \eta_\beta^{*})-v,
\]
in $\RR^{d-1}$, whose marks are the associated and suitably centred Laguerre cells. For a given parameter $\nu\in\RR$ we now define a probability measure $\PP_{\beta,\nu}$ on the space $\cC'$ of non-empty compact subsets of $\RR^{d-1}$ as follows:
\[
\PP_{\beta,\nu}(\,\cdot\,) := {1\over \lambda_{\beta,\nu}}\EE\sum_{(v,M)\in\chi_{\beta}}{\bf 1}(M\in\,\cdot\,){\bf 1}_{[0,1]^{d-1}}(v)\Vol(M)^\nu,
\]
where  $\lambda_{\beta,\nu}\in[0,\infty]$ is the normalizing constant given by
\[
\lambda_{\beta,\nu}:= \EE\sum_{(v,M)\in\chi_{\beta}}{\bf 1}_{[0,1]^{d-1}}(v)\Vol(M)^\nu.
\]
Note that $\lambda_{\beta,\nu}$ might be infinite for some values of $\nu$. For any $\nu$ with $\lambda_{\beta,\nu}<\infty$, a random simplex $Z_{\beta,\nu}$ with distribution $\PP_{\beta,\nu}$ is called the {\bf  $\Vol^{\nu}$-weighted} (or just {\bf $\nu$-weighted}) {\bf typical cell} of the tessellation $\cD_{\beta}$. The following two special cases are of particular interest:
\begin{itemize}
\item[(i)]  $Z_{\beta,0}$ coincides with the classical typical cell of $\cD_{\beta}$;
\item[(ii)]  $Z_{\beta,1}$ coincides with the volume-weighted typical cell of $\cD_{\beta}$, which has the same distribution as the almost surely uniquely determined cell containing the origin, up to translation.
\end{itemize}

The next result is an explicit description of the distribution of the $\nu$-weighted typical cell of a $\beta$-Delaunay tessellation taken from  \cite[Theorem 4.5]{Part1}.

\begin{lemma}[Distribution of the $\nu$-weighted typical cell]\label{theo:typical_cell_stoch_rep}
Fix $d\geq 2$, $\nu\ge-1$, $\beta>-1$ and $\gamma >0$. Then for any Borel set $A\subset \cC'$ we have that
\begin{align*}
\PP_{\beta,\nu}(A)
&=
\alpha_{d,\beta,\nu}\int_{(\BB^{d-1})^d}\dd y_1\ldots \dd y_d \, \int_{0}^{\infty}\dd r\,{\bf 1}_A(\conv(ry_1,\ldots,ry_d)) r^{2d\beta+d^2+\nu(d-1)}\notag\\
&\qquad\times e^{-m_{d,\beta} r^{d+1+2\beta}}
\Delta_{d-1}(y_1,\ldots,y_d)^{\nu+1}\prod_{i=1}^d(1-\|y_i\|^2)^{\beta},
\end{align*}
where $\Delta_{d-1}(y_1,\ldots,y_d)$ is the volume of $\conv(y_1,\ldots,y_d)$ and $\alpha_{d,\beta,\nu}$ and $m_{d,\beta}$ are constants given by
\begin{align}
\label{eq:DefMdbeta} m_{d,\beta}&={\gamma\Gamma({d\over 2})\over 2\sqrt{\pi}\Gamma({d+1\over 2})},\\
\alpha_{d,\beta,\nu}&=\pi^{d(d-1)\over 2}\,{((d-1)!)^{\nu+1}(d+1+2\beta)\Gamma({d(d+\nu+2\beta)-\nu+1\over 2})\over \Gamma({d(d+\nu+2\beta)\over 2}+1)\Gamma(d+{(\nu-1)(d-1)\over d+2\beta+1})}\Big({\gamma\,\Gamma({d\over 2}+\beta+1)\over \sqrt{\pi}\Gamma({d+1\over 2}+\beta+1)}\Big)^{d+{(\nu-1)(d-1)\over d+2\beta+1}}\notag\\
&\qquad\qquad\times{\Gamma({d+\nu\over 2}+\beta+1)^d\over \Gamma(\beta+1)^d}\prod\limits_{i=1}^{d-1}{\Gamma({i\over 2})\over \Gamma({i+\nu+1\over 2})}.\notag
\end{align}
\end{lemma}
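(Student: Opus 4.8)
The plan is to realise the measure $\PP_{\beta,\nu}$ through the multivariate Mecke (Slivnyak) equation for the Poisson process $\eta_\beta$, and then to pass from the $d$ generating points to a single scale parameter and a shape configuration in $\BB^{d-1}$ by a parabolic Blaschke--Petkantschin type change of variables. First I would rewrite the defining sum. By the paraboloid characterisation recalled in the introduction, the cells of $\cD_\beta=\cL^{*}(\eta_\beta)$ are in bijection with the unordered $d$-tuples $\{x_1,\dots,x_d\}\subset\eta_\beta$, $x_i=(v_i,h_i)$, whose generated downward paraboloid $\Pi(x_1,\dots,x_d)$ has interior free of points of $\eta_\beta$; the associated simplex is $\conv(v_1,\dots,v_d)$ and the centre function selects a unique location $v$. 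Hence $\lambda_{\beta,\nu}\,\PP_{\beta,\nu}(\,\cdot\,)$ equals $\tfrac1{d!}$ times the expected sum over ordered $d$-tuples of distinct points of the product of $\mathbf{1}_A(\conv(v_1,\dots,v_d))$, the indicator $\mathbf{1}_{[0,1]^{d-1}}(v)$, the weight $\Vol(\conv(v_1,\dots,v_d))^{\nu}$ and the emptiness indicator. Since the intensity of $\eta_\beta$ has density $\gamma c_{d,\beta}h^{\beta}$ and the $d$ added points lie on $\partial\Pi$ (hence not in $\inter\Pi$), Mecke's equation turns this into
\[
\frac{(\gamma c_{d,\beta})^{d}}{d!}\int_{(\RR^{d-1}\times\RR_{+})^{d}}\mathbf{1}_A(\conv(v_1,\dots,v_d))\,\mathbf{1}_{[0,1]^{d-1}}(v)\,\Vol(\conv(v_1,\dots,v_d))^{\nu}\,e^{-\mu_\beta(\inter\Pi)}\prod_{i=1}^{d}h_i^{\beta}\,\dd(v_i,h_i).
\]

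Second, I would compute the void exponent $\mu_\beta(\inter\Pi)$. Writing the paraboloid apex as $(z,\tau)$, so that $\inter\Pi=\{(v,h)\colon 0\le h<\tau-\|v-z\|^{2}\}$, a direct integration of $h^{\beta}$ in polar coordinates over this region gives $\mu_\beta(\inter\Pi)=m_{d,\beta}\,\tau^{(d+1+2\beta)/2}$ for an explicit constant; setting $r:=\sqrt{\tau}$ this is exactly $m_{d,\beta}r^{d+1+2\beta}$, which produces the claimed exponential factor.

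Third -- and this is the heart of the matter -- I would change variables from $(v_1,h_1,\dots,v_d,h_d)$ to $(z,r,y_1,\dots,y_d)$, where $(z,\tau=r^{2})$ is the apex and $y_i:=(v_i-z)/r\in\BB^{d-1}$, so that $v_i=z+ry_i$ and, since the points lie on the paraboloid, $h_i=r^{2}(1-\|y_i\|^{2})$. Both sides have dimension $d^{2}$, the map is a bijection for generic tuples, and the key point is its Jacobian, the parabolic analogue of the affine Blaschke--Petkantschin formula: a determinant computation (which I verified directly for $d=2$, where it equals $4r^{4}|y_1-y_2|$) yields that it equals a dimensional constant times $r^{d^{2}}\Delta_{d-1}(y_1,\dots,y_d)$. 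Substituting, the height factor becomes $r^{2d\beta}\prod_{i}(1-\|y_i\|^{2})^{\beta}$, the weight becomes $\Vol(\conv(ry_1,\dots,ry_d))^{\nu}=r^{\nu(d-1)}\Delta_{d-1}(y_1,\dots,y_d)^{\nu}$, so the powers of $r$ collect to the exponent $2d\beta+d^{2}+\nu(d-1)$ and the two powers of $\Delta_{d-1}$ combine to $\Delta_{d-1}^{\nu+1}$, matching the stated integrand.

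Finally, the apex $z$ enters only through $\mathbf{1}_{[0,1]^{d-1}}(v)$, and since the centre function is translation covariant the $z$-integration contributes the Lebesgue measure of the unit cube, namely $1$. This isolates the asserted integral up to the global constant. To pin down $\alpha_{d,\beta,\nu}$ I would evaluate the total mass (taking $\mathbf{1}_A\equiv1$): the radial integral $\int_{0}^{\infty}r^{a}e^{-m_{d,\beta}r^{b}}\dd r$ with $a=2d\beta+d^{2}+\nu(d-1)$ and $b=d+1+2\beta$ is a Gamma integral whose exponent $(a+1)/b$ simplifies to $d+\tfrac{(\nu-1)(d-1)}{d+2\beta+1}$ (exactly the exponent visible in $\alpha_{d,\beta,\nu}$), while the remaining integral over $(\BB^{d-1})^{d}$ of $\Delta_{d-1}^{\nu+1}\prod_i(1-\|y_i\|^{2})^{\beta}$ is a Dirichlet/beta-type moment with a known closed form; dividing by $\lambda_{\beta,\nu}$ and assembling the Gamma factors produces $\alpha_{d,\beta,\nu}$. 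I expect the main obstacle to be the Jacobian of the parabolic Blaschke--Petkantschin substitution together with the bookkeeping of the Gamma-function constants (and a careful tracking of the normalisation of the radial variable), whereas the probabilistic reduction via Mecke's formula and the void-probability computation are comparatively routine.
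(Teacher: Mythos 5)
This paper does not actually prove Lemma \ref{theo:typical_cell_stoch_rep}; it imports it verbatim from Theorem 4.5 of \cite{Part1}, so there is no internal proof to compare against. Your proposal reconstructs what is essentially the argument of that reference: the multivariate Mecke equation applied to the empty-paraboloid characterisation of the cells, the void-measure computation producing the exponential factor, the apex--radius--shape change of variables, and normalisation of the total mass via the known moment formula for beta simplices. That architecture is sound, and your power counting is consistent with the stated integrand. Two substantive remarks. First, the Jacobian you flag as the main obstacle is easier than you suggest if you factor the substitution: for fixed $v_1,\dots,v_d$ the map $(h_1,\dots,h_d)\mapsto(z,\tau)$ defined by $h_i=\tau-\|v_i-z\|^2$ becomes affine in $(z,\tau-\|z\|^2)$ with Jacobian $2^{d-1}(d-1)!\,\Delta_{d-1}(v_1,\dots,v_d)$, and then the substitution $v_i=z+ry_i$, $\tau=r^2$ at fixed $(z,r)$ contributes $2r\cdot r^{d(d-1)}$ and turns $\Delta_{d-1}(v_1,\dots,v_d)$ into $r^{d-1}\Delta_{d-1}(y_1,\dots,y_d)$; collecting powers gives $2^{d}(d-1)!\,r^{d^2}\Delta_{d-1}(y_1,\dots,y_d)$, which matches your $d=2$ check and closes the one genuine gap in the write-up. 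Second, your void computation yields $\mu_\beta(\inter\Pi)=\frac{\gamma\,\Gamma(\frac d2+\beta+1)}{\sqrt\pi\,\Gamma(\frac{d+1}2+\beta+1)}\,r^{d+1+2\beta}$, and this is exactly the constant forced by consistency with the base of the power $d+\frac{(\nu-1)(d-1)}{d+2\beta+1}$ appearing in $\alpha_{d,\beta,\nu}$ and with Lemma \ref{lm:VolumeMomBeta}; the value displayed in \eqref{eq:DefMdbeta} appears to be a misprint, not an error on your side. One small correction: in your Mecke display the indicator of $A$ must be applied to the recentred simplex $\conv(v_1,\dots,v_d)-v$, since the mark $M$ in the definition of $\PP_{\beta,\nu}$ is the centred cell; only then does the apex integration factor out as the volume of the unit cube.
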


\begin{remark}\label{rem:rep_typical}
In more probabilistic terms, the $\nu$-weighted typical cell of the $\beta$-Delaunay tessellation $\cD_\beta$ has the same distribution as the random simplex $\conv(RY_1,\ldots,RY_d)$, where
\begin{enumerate}
\item[(a)] $R$ is a random variable whose density is proportional to $r^{2d\beta+d^2+\nu(d-1)}e^{-m_{d,\beta} r^{d+1+2\beta}}$ on $(0,\infty)$;
\item[(b)] $(Y_1,\ldots,Y_d)$ are $d$ random points in the unit  ball $\BB^{d-1}$ whose joint density is proportional to
$$
\Delta_{d-1}(y_1,\ldots,y_d)^{\nu+1}   \prod\limits_{i=1}^d(1-\|y_i\|^2)^{\beta},
\qquad y_1\in \BB^{d-1},\ldots, y_d\in \BB^{d-1};
$$
\item[(c)] $R$ is independent of $(Y_1,\ldots,Y_d)$.
\end{enumerate}
\end{remark}

\begin{remark}
It follows from the proof of Theorem 4.5 in \cite{Part1} that the normalizing constant $\lambda_{{\beta},\nu}$ is finite for any $\nu \ge -1$. It was also conjectured that it is possible to enlarge the diapason of possible values for $\nu$ to $\nu>-2$.
\end{remark}

\begin{remark}\label{rm:BetaToPoisson}
Let us point out that in the limiting case $\beta\to -1$ the beta distribution with density $c_{d-1,\beta}(1-\|x\|^2)^{\beta}{\bf 1}_{\BB^{d-1}}(x)$ weakly converges to the uniform distribution on the unit sphere $\SS^{d-2}$. Thus, $\PP_{\beta,\nu}$ for fixed $\nu\ge-1$ and $\gamma>0$ weakly converges to a probability measure $\PP_{-1,\nu}$, which coincides with the the distribution of the $\nu$-weighted typical cell of Poisson-Delaunay tessellation in $\RR^{d-1}$ corresponding to the intensity $\gamma\omega_d^{-1}$ of underlying Poisson point process, where $\omega_d$ is the surface area of unit $(d-1)$-dimensional sphere, see \cite[Theorem 2.3]{GusakovaThaeleDelaunay} for general $\nu$ and \cite[Theorem 10.4.4]{SW} for the case $\nu=-1$.
\end{remark}

The following lemma contains an explicit formula for the moments of the random variables $\Vol(Z_{\beta,\nu})$ and is taken from \cite[Theorem 5.1]{Part1}. 
We will use it in Section \ref{sec:LimitTheorems} when we study the asymptotic behaviour of the log-volume of $Z_{\beta,\nu}$ in high dimensions.

\begin{lemma}[Volume moments of the $\nu$-weighted typical cell]\label{lm:VolumeMomBeta}
Let $Z_{\beta,\nu}$ be the $\nu$-weighted typical cell of a $\beta$-Delaunay tessellation with $\beta\geq -1$ and $\nu\ge-1$. Then, for any $s\ge-\nu-1$, we have
\begin{align*}
\EE \Vol(Z_{\beta,\nu})^s &= {1\over ((d-1)!)^s}\Big({ \sqrt{\pi}\Gamma({d+1\over 2}+\beta+1)\over \gamma\Gamma({d\over 2}+\beta+1)}\Big)^{{s(d-1)\over d+2\beta+1}}{\Gamma({d(d+2\beta)+\nu(d-1)+1\over 2})\over\Gamma({d(d+2\beta)+(\nu+s)(d-1)+1\over 2})}{\Gamma({d(d+\nu+s +2\beta)\over 2}+1)\over\Gamma({d(d+\nu +2\beta)\over 2}+1)}\\
&\qquad\times{\Gamma(d+{(\nu+s-1)(d-1)\over d+2\beta+1})\over\Gamma(d+{(\nu-1)(d-1)\over d+2\beta+1})}{\Gamma({d+\nu\over 2}+\beta +1)^d\over\Gamma({d+\nu+s\over 2}+\beta +1)^d}\prod\limits_{i=1}^{d-1}{\Gamma({i+\nu+s+1\over 2})\over \Gamma({i+\nu+1\over 2})}.
\end{align*}
\end{lemma}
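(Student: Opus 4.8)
The plan is to read off the moment formula directly from the explicit distribution in Lemma~\ref{theo:typical_cell_stoch_rep}, exploiting the fact that raising the volume to the power $s$ only shifts the weight parameter $\nu$ to $\nu+s$. Using the scaling relation $\Vol(\conv(ry_1,\ldots,ry_d))=r^{d-1}\Delta_{d-1}(y_1,\ldots,y_d)$, Lemma~\ref{theo:typical_cell_stoch_rep} gives
\begin{align*}
\EE\Vol(Z_{\beta,\nu})^s
&=\alpha_{d,\beta,\nu}\int_{(\BB^{d-1})^d}\dd y_1\ldots \dd y_d\int_0^\infty \dd r\, r^{2d\beta+d^2+(\nu+s)(d-1)}e^{-m_{d,\beta}r^{d+1+2\beta}}\\
&\qquad\times\Delta_{d-1}(y_1,\ldots,y_d)^{(\nu+s)+1}\prod_{i=1}^d(1-\|y_i\|^2)^\beta.
\end{align*}
The integral on the right is exactly the one appearing in Lemma~\ref{theo:typical_cell_stoch_rep} for the weight parameter $\nu+s$ in place of $\nu$, with $A=\cC'$ (so that the indicator is identically $1$). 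Since $\PP_{\beta,\nu+s}$ is a probability measure whenever $\nu+s\ge-1$, that integral equals $1/\alpha_{d,\beta,\nu+s}$, and hence
$$
\EE\Vol(Z_{\beta,\nu})^s=\frac{\alpha_{d,\beta,\nu}}{\alpha_{d,\beta,\nu+s}}.
$$
Note that the hypothesis $s\ge-\nu-1$ is precisely the condition $\nu+s\ge-1$ that makes $\Delta_{d-1}^{(\nu+s)+1}$ integrable and $\alpha_{d,\beta,\nu+s}$ well defined.

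It then remains to substitute the explicit expression for $\alpha_{d,\beta,\nu}$ and simplify the quotient. Several factors are independent of the weight parameter and cancel (the prefactor $\pi^{d(d-1)/2}$, the factor $d+1+2\beta$, the denominators $\Gamma(\beta+1)^d$, and the products $\prod_{i=1}^{d-1}\Gamma(i/2)$); the ratio $((d-1)!)^{\nu+1}/((d-1)!)^{\nu+s+1}$ produces the prefactor $((d-1)!)^{-s}$; and the $\gamma$-dependent power term collapses to $(\sqrt\pi\,\Gamma(\tfrac{d+1}{2}+\beta+1)/(\gamma\,\Gamma(\tfrac{d}{2}+\beta+1)))^{s(d-1)/(d+2\beta+1)}$. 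The two `large' Gamma factors are matched using the identity $d(d+\nu+2\beta)-\nu+1=d(d+2\beta)+\nu(d-1)+1$, and the remaining Gamma quotients reproduce the stated formula. In this route there is no genuine obstacle: the work is the purely mechanical bookkeeping of Gamma functions in the ratio of the two complicated constants.

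As a more self-contained alternative, which also explains the structure of the answer, I would instead use the stochastic representation of Remark~\ref{rem:rep_typical}. Writing $Z_{\beta,\nu}\stackrel{d}{=}\conv(RY_1,\ldots,RY_d)$ with $R$ independent of $(Y_1,\ldots,Y_d)$ gives $\Vol(Z_{\beta,\nu})=R^{d-1}\Delta_{d-1}(Y_1,\ldots,Y_d)$, and hence the factorisation $\EE\Vol(Z_{\beta,\nu})^s=\EE[R^{s(d-1)}]\cdot\EE[\Delta_{d-1}(Y_1,\ldots,Y_d)^s]$. The radial moment is elementary: the substitution $u=m_{d,\beta}r^{d+1+2\beta}$ turns it into a power of $m_{d,\beta}$ times a ratio of Gamma values with arguments $d+\tfrac{(\nu+s-1)(d-1)}{d+2\beta+1}$ and $d+\tfrac{(\nu-1)(d-1)}{d+2\beta+1}$.

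The main obstacle in this second route is the shape moment $\EE[\Delta_{d-1}(Y_1,\ldots,Y_d)^s]$, which is a quotient of beta-type integrals $\int_{(\BB^{d-1})^d}\Delta_{d-1}^{\tau+1}\prod_i(1-\|y_i\|^2)^\beta\,\dd y$ evaluated at $\tau=\nu+s$ and $\tau=\nu$. Evaluating such an integral requires the affine Blaschke--Petkantschin formula to separate the volume of the point configuration from its shape, followed by iterated (Selberg-type) one-dimensional beta integrals; this is where the product $\prod_{i=1}^{d-1}$, the factors $\Gamma(\tfrac{d+\nu}{2}+\beta+1)^d$ and the `large' Gamma terms arise, and where the Gamma-function bookkeeping (and the nontrivial cancellation reconciling the radial scale $m_{d,\beta}$ with the beta weights) is heaviest. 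The first route sidesteps this entirely by importing the already-computed normalisation $\alpha_{d,\beta,\nu}$, which is why I would present it as the primary argument.
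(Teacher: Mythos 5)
Your first route is correct, and I verified the bookkeeping: multiplying the density in Lemma~\ref{theo:typical_cell_stoch_rep} by $\Vol^s=r^{s(d-1)}\Delta_{d-1}^s$ produces exactly the (unnormalised) density of $\PP_{\beta,\nu+s}$, so $\EE\Vol(Z_{\beta,\nu})^s=\alpha_{d,\beta,\nu}/\alpha_{d,\beta,\nu+s}$, and this quotient does reduce to the stated formula (the key identity $d(d+\nu+2\beta)-\nu+1=d(d+2\beta)+\nu(d-1)+1$ and the collapse of the $\gamma$-power exponent to $-s(d-1)/(d+2\beta+1)$ both check out). Note that the present paper gives no proof at all --- it imports the lemma from Theorem~5.1 of \cite{Part1} --- so your argument is a genuine addition: it derives the moment formula as a one-line corollary of the distributional representation rather than redoing the Blaschke--Petkantschin/beta-integral computation, which is exactly what your second route would amount to and what presumably underlies the cited result. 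Two small caveats. First, the lemma is stated for $\beta\ge-1$ while Lemma~\ref{theo:typical_cell_stoch_rep} only covers $\beta>-1$; the boundary case $\beta=-1$ needs the weak-convergence argument of Remark~\ref{rm:BetaToPoisson} (together with a moment-convergence or dominated-convergence justification), or a direct appeal to the Poisson--Delaunay formula in \cite{GusakovaThaeleDelaunay}. Second, the statement ``$\PP_{\beta,\nu+s}$ is a probability measure whenever $\nu+s\ge-1$'' silently uses that $\lambda_{\beta,\nu+s}<\infty$ in that range, i.e.\ that the constant $\alpha_{d,\beta,\nu+s}$ really is the reciprocal of a finite integral; this is asserted in the remark following Lemma~\ref{theo:typical_cell_stoch_rep}, so it is fine to cite, but it is the one place where your argument leans on more than the displayed formula.
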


\section{Kendall's problem for weighted $\beta$-Delaunay simplices}\label{sec:KendallBeta}

\subsection{Statement and proof of the main theorem}

Consider for $\beta>-1$ the $\beta$-Delaunay tessellation $\cD_\beta$ in $\RR^{d-1}$. For fixed $\nu\ge-1$ we are interested in the shape of a $\nu$-weighed typical cell $Z_{\beta,\nu}$ of $\cD_\beta$, given that $Z_{\beta,\nu}$ has large volume. Previously, this question, also known as Kendall's problem as we explained in the introduction, was addressed for the typical cell of the classical Poisson-Delaunay tessellation by Hug and Schneider \cite{HugSchneiderDelaunay}, who showed that the shape of such cell is close to the shape of a regular simplex. Here, we extend this result in two directions, namely to the case of $\beta$-Delaunay tessellations and at the same time to arbitrary weights $\nu\ge-1$. For a simplex $S\subset\RR^{d-1}$ we denote by $\cF_0(S)$ the set of vertices of $S$. To measure the distance between two simplices $S_1,S_2\subset\RR^{d-1}$ we let
$$
\rho(S_1,S_2):=\inf\{s\geq 0:\text{ for all } v\in\cF_0(S_1)\text{ there exists } w\in\cF_0(S_2)\text{ with }\|v-w\|\leq s\}
$$
be the smallest $s\geq 0$ with the property that for each vertex $v$ of $S_1$ there exists a vertex $w$ of $S_2$ such that their Euclidean distance is bounded by $s$. If $S\subset\RR^{d-1}$ is a simplex, we let $z(S)$ be the centre and $r(S)$ be the radius of its circumsphere, and define
$$
\rho(S) := \inf\{\rho\big(r(S)^{-1}(S-z(S)),T\big):T\text{ a regular simplex with vertices on }\SS^{d-2}\}.
$$
In particular, if the value of $\rho(S)$ is small, we can say that the shape of the simplex $S$ is close to that of a regular simplex.

\begin{theorem}[Kendall's problem for the $\nu$-weighted typical cell]\label{thm:Kendall}
	Fix $d\geq 2$, $\beta>-1$ and $\nu\ge-1$, {such that $d+2\beta+\nu\ge 0$}. Let $\varepsilon\in(0,1)$ and $I=[a,b)$ be an interval, where $\infty\geq b>a\geq a_0$ for some $a_0>0$. Then there exists a constant $c\in(0,\infty)$ only depending on $d$ and $\beta$ and a constant $C\in(0,\infty)$ only depending on $d$, $\beta$, $\varepsilon$, $\nu$ and $a_0$, such that
	$$
	\PP(\rho(Z_{\beta,\nu})\geq\varepsilon\,|\,\Vol(Z_{\beta,\nu})\in I) \leq C\,\exp\Big\{{-c\,\varepsilon^2\,a^{d+1+2\beta\over d-1}}\Big\}.
	$$
	In particular,
	$$
	\lim_{a\to\infty}\PP(\rho(Z_{\beta,\nu})\geq\varepsilon\,|\,\Vol(Z_{\beta,\nu})\geq a) = 0.
	$$
\end{theorem}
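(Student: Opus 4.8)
The plan is to reduce the conditional probability to a ratio of two integrals obtained from the explicit stochastic representation of $Z_{\beta,\nu}$ in Remark \ref{rem:rep_typical}. By that representation, $Z_{\beta,\nu}\stackrel{d}{=}\conv(RY_1,\ldots,RY_d)$ with $R$ independent of the shape variables $(Y_1,\ldots,Y_d)$. Since scaling by $R$ does not change the (scale-invariant) shape functional $\rho$, the event $\{\rho(Z_{\beta,\nu})\ge\varepsilon\}$ depends only on $(Y_1,\ldots,Y_d)$, whereas the volume $\Vol(Z_{\beta,\nu})=R^{d-1}\Delta_{d-1}(Y_1,\ldots,Y_d)$ couples the two blocks. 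The first step is therefore to write
$$
\PP(\rho(Z_{\beta,\nu})\ge\varepsilon\,|\,\Vol(Z_{\beta,\nu})\in I)
=\frac{\EE\big[{\bf 1}(\rho\ge\varepsilon)\,{\bf 1}(R^{d-1}\Delta_{d-1}\in I)\big]}{\EE\big[{\bf 1}(R^{d-1}\Delta_{d-1}\in I)\big]},
$$
and then, for fixed values of $(Y_1,\ldots,Y_d)$, to integrate out $R$ against its density $\propto r^{2d\beta+d^2+\nu(d-1)}e^{-m_{d,\beta}r^{d+1+2\beta}}$ over the set $\{r:r^{d-1}\Delta_{d-1}\in I\}$. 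This produces, for each $a$, a conditional density on the shape space whose dependence on $\Delta:=\Delta_{d-1}(Y_1,\ldots,Y_d)$ I would extract explicitly.

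The second step is the key estimate. After integrating out $R$, I expect the numerator and denominator to be controlled by integrals over the shape variables of the form $\int \Phi_a(\Delta)\,\Delta^{\nu+1}\prod_i(1-\|y_i\|^2)^\beta\,\dd y$, where $\Phi_a$ is a weight peaked at large $\Delta$ (because conditioning on large volume with $R$ bounded forces $\Delta$ to be large). Concretely, I would show that conditioning on $\Vol(Z_{\beta,\nu})\gtrsim a$ effectively inserts a factor behaving like $\exp\{c\, a^{(d+1+2\beta)/(d-1)}\Delta^{-(d+1+2\beta)/(d-1)}\}$ or, after the substitution matching the exponent $r^{d+1+2\beta}$ against $r^{d-1}\Delta\in I$, a tilt that rewards configurations with large $\Delta$. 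The decisive geometric input is the isodiametric-type bound: among all simplices inscribed in $\SS^{d-2}$, the regular simplex uniquely maximises the volume $\Delta$, and moreover this maximum is \emph{stable}, i.e. $\Delta_{d-1}(y_1,\ldots,y_d)\le \Delta_{\max}\,(1-c_0\,\rho^2)$ whenever $\rho(\conv(y_1,\ldots,y_d))\ge\varepsilon$, for some $c_0>0$ depending on $d$. Such a quantitative stability statement for the volume functional is exactly the ingredient Hug and Schneider isolate in \cite{HugSchneiderDelaunay}, and I would invoke (or re-derive) it here.

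The third step is to combine these two ingredients. On the event $\{\rho\ge\varepsilon\}$ the stability bound caps $\Delta$ strictly below the maximum by a factor $1-c_0\varepsilon^2$, so the volume-conditioning tilt, which is monotone increasing in $\Delta$ and scales like the exponential of $a^{(d+1+2\beta)/(d-1)}$ times a function of $\Delta$, is exponentially smaller on $\{\rho\ge\varepsilon\}$ than its value near the regular-simplex configuration that dominates the denominator. Taylor-expanding the tilt exponent around $\Delta_{\max}$ then yields a ratio bounded by $C\exp\{-c\,\varepsilon^2\,a^{(d+1+2\beta)/(d-1)}\}$, with $c$ absorbing $c_0$ and the derivative of the exponent, and $C$ collecting the remaining (sub-exponential in $a$) prefactors; the hypothesis $d+2\beta+\nu\ge 0$ guarantees integrability so that $C<\infty$. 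The limit statement as $a\to\infty$ is then immediate. I expect the main obstacle to be the first step in its quantitative form: one must control the $R$-integral over the truncated range $\{r^{d-1}\Delta\in I\}$ uniformly in $\Delta$ and in the endpoints $a,b$ (including $b=\infty$), and show that the resulting effective weight on the shape space really does concentrate the exponent at $a^{(d+1+2\beta)/(d-1)}$ rather than at some other power of $a$; tracking the exact exponent through the Gamma-function asymptotics and the change of variables is the delicate bookkeeping that makes the stated rate sharp.
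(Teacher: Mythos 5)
Your proposal is correct and follows essentially the same route as the paper: the explicit density representation from Remark \ref{rem:rep_typical}, integration over the radial variable to isolate an effective weight $e^{-m_{d,\beta}(a/\Delta_{d-1})^{(d+1+2\beta)/(d-1)}}$ on the shape space (note your tilt should carry a negative sign in the exponent, though your subsequent reasoning treats it correctly as increasing in $\Delta$), and the Hug--Schneider stability estimate \eqref{eq:StabilityEstimate} to separate the numerator from the denominator by a factor $e^{-c\varepsilon^2 a^{(d+1+2\beta)/(d-1)}}$. The paper packages the two bounds as Lemma \ref{lem:Kendall1} and Lemma \ref{lem:Kendall2} and handles long intervals (including $b=\infty$) by covering $I$ with the geometric family $a(1+h_0)^i[1,1+h_0]$ and summing a convergent series, which is precisely the uniform-in-$(a,b)$ control you flag as the remaining bookkeeping.
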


We remark that by formally taking $\beta=-1$ and $\nu=0$, where $Z_{\beta,\nu}$ is the typical cell in a classical Poisson-Delaunay tessellation, Theorem \ref{thm:Kendall} reduces to \cite[Theorem 1]{HugSchneiderDelaunay} (with the dimension $d$ there replaced by $d-1$ in our case). The proof of Theorem \ref{thm:Kendall} is based on the explicit representation of the distribution of $Z_{\beta,\nu}$ we developed in part I of this paper and which we recalled in the previous section and otherwise closely follows the principal steps from \cite{HugSchneiderDelaunay}, but with suitable modifications. In particular, the following two more technical results are needed, whose proofs are postponed to Section \ref{subsec:ProofLemmasKendall}. To formulate them, we denote by $\tau_{d-1}$ the $(d-1)$-dimensional volume of a $(d-1)$-dimensional regular simplex with vertices on the unit sphere $\SS^{d-2}$. Moreover, we will use the following stability estimate from \cite[Theorem 2]{HugSchneiderDelaunay}: Let $\varepsilon\in[0,1]$ and $S$ be a simplex with vertices on $\SS^{d-2}$ and with $\rho(S)\geq\varepsilon$. Then there exists a constant $c_1\in(0,\infty)$ depending on $d$ only such that
\begin{equation}\label{eq:StabilityEstimate}
	\Vol(S)\leq (1-c_1\varepsilon^2)\tau_{d-1}.
\end{equation}
Also, recall the definition of the constant $m_{d,\beta}$ from \eqref{eq:DefMdbeta}.

\begin{lemma}\label{lem:Kendall1}
	For all $\varepsilon\in(0,1)$ there is a constant $c_2\in(0,\infty)$, which depends on $d$, $\beta$,  $\varepsilon$ and $\nu$ only, such that for all $h\leq (c_1/(c_1+12))\varepsilon^2=:h_0$ and $a>0$,
	\begin{align*}
		\PP(\Vol(Z_{\beta,\nu})\in a[1,1+h]) &\geq c_2\, h\,a^{\nu+{2d\beta+d^2+1\over d-1}}\\
		&\times\exp\Big\{{-{m_{d,\beta}\tau_{d-1}^{-{d+1+2\beta\over d-1}}}\Big(1+{c_1\over 4}(2^{2(d+\beta)\over d-1}-1)\varepsilon^2\Big)a^{d+1+2\beta\over d-1}}\Big\}.
	\end{align*}
\end{lemma}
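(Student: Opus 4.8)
The plan is to start from the explicit description of the distribution of $Z_{\beta,\nu}$. Writing $\Delta=\Delta_{d-1}(y_1,\dots,y_d)$, $p=2d\beta+d^2+\nu(d-1)$, $q=d+1+2\beta$, and using that $\Vol(\conv(ry_1,\dots,ry_d))=r^{d-1}\Delta$, Lemma~\ref{theo:typical_cell_stoch_rep} (equivalently Remark~\ref{rem:rep_typical}) gives, with $r_0=(a/\Delta)^{1/(d-1)}$ and $r_1=((1+h)a/\Delta)^{1/(d-1)}=r_0(1+h)^{1/(d-1)}$,
$$\PP(\Vol(Z_{\beta,\nu})\in a[1,1+h]) = \alpha_{d,\beta,\nu}\int_{(\BB^{d-1})^d}\Delta^{\nu+1}\prod_{i=1}^d(1-\|y_i\|^2)^{\beta}\Big(\int_{r_0}^{r_1}r^{p}e^{-m_{d,\beta}r^{q}}\,\dd r\Big)\dd y_1\cdots \dd y_d.$$
Since we only need a lower bound, I would restrict the outer integral to a ``good'' set $G\subset(\BB^{d-1})^d$ on which $\Delta$ is forced to be close to its maximal value $\tau_{d-1}$, and then bound the inner $r$-integral and the outer density/measure separately on $G$.

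For the inner integral I would use the crude bound $\int_{r_0}^{r_1}r^{p}e^{-m_{d,\beta}r^{q}}\,\dd r\ge (r_1-r_0)\,r_0^{p}\,e^{-m_{d,\beta}r_1^{q}}$, obtained by bounding the increasing factor $r^{p}$ below by $r_0^{p}$ and the decreasing factor $e^{-m_{d,\beta}r^{q}}$ below by $e^{-m_{d,\beta}r_1^{q}}$. The factor $h$ comes out of $r_1-r_0=r_0\big((1+h)^{1/(d-1)}-1\big)\ge \tilde c\,h\,r_0$, valid for $h\in(0,h_0]$ with $h_0<1$. Combining, $(r_1-r_0)r_0^{p}\ge \tilde c\,h\,r_0^{p+1}$, and since $\Delta\le\tau_{d-1}$ on $G$ one has $r_0^{p+1}=(a/\Delta)^{(p+1)/(d-1)}\ge \tau_{d-1}^{-(p+1)/(d-1)}a^{(p+1)/(d-1)}$, where $(p+1)/(d-1)=\nu+\frac{2d\beta+d^2+1}{d-1}$ reproduces exactly the claimed polynomial power of $a$ (the hypothesis $d+2\beta+\nu\ge0$ is what guarantees $p+1\ge0$, so that replacing $\Delta$ by $\tau_{d-1}$ is the correct direction). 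For the exponential, I would bound $r_1^{q}=(1+h)^{q/(d-1)}(a/\Delta)^{q/(d-1)}\le\tau_{d-1}^{-q/(d-1)}a^{q/(d-1)}\,(1+h)^{q/(d-1)}(\tau_{d-1}/\Delta)^{q/(d-1)}$ and, using $h\le h_0=\tfrac{c_1}{c_1+12}\varepsilon^2=O(\varepsilon^2)$ together with the forthcoming estimate $\Delta\ge\tau_{d-1}(1-O(\varepsilon^2))$ on $G$, conclude that both correction factors are $1+O(\varepsilon^2)$; tracking the constants (via the convexity bound $(1+h)^{q/(d-1)}\le 1+(2^{q/(d-1)}-1)h$ and the identity $\tfrac{q}{d-1}+1=\tfrac{2(d+\beta)}{d-1}$) yields the inflated exponent $m_{d,\beta}\tau_{d-1}^{-q/(d-1)}\big(1+\tfrac{c_1}{4}(2^{2(d+\beta)/(d-1)}-1)\varepsilon^2\big)a^{q/(d-1)}$. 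Note that inflating the exponent only weakens the bound, which is admissible for a lower bound.

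It remains to construct $G$ and bound the outer factor. I would fix vertices $w_1,\dots,w_d\in\SS^{d-2}$ of a regular simplex, set $\delta=\kappa\varepsilon^2$ for a small $\kappa=\kappa(d)$, and take $G=\prod_{i=1}^d B((1-\delta)w_i,\delta/2)$. On $G$ each $\|y_i\|$ lies in a shell bounded away from $1$ (roughly $\|y_i\|\in[1-\tfrac32\delta,1-\tfrac12\delta]$), so $1-\|y_i\|^2$ stays comparable to $\delta$; hence $\prod_i(1-\|y_i\|^2)^{\beta}$ is bounded below by a positive constant depending on $d,\beta,\varepsilon$ \emph{irrespective of the sign of $\beta$}, and likewise $\Delta^{\nu+1}$ is bounded below. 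Moreover $\Delta\ge(1-\delta)^{d-1}\tau_{d-1}(1-O(\delta))\ge\tau_{d-1}(1-O(\varepsilon^2))$, which is the estimate used above, and $\mathrm{Leb}(G)\ge m_G>0$ is a fixed constant. Assembling the three lower bounds and absorbing $\alpha_{d,\beta,\nu}$, the density lower bound, $m_G$ and $\tilde c$ into a single constant $c_2=c_2(d,\beta,\varepsilon,\nu)$ gives the asserted inequality.

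The main obstacle is precisely the construction of $G$: to push $\Delta$ up to $\tau_{d-1}$ one must place the $y_i$ near the sphere, yet the weight $(1-\|y_i\|^2)^{\beta}$ degenerates (to $0$ if $\beta>0$, to $+\infty$ if $\beta<0$) as $\|y_i\|\to1$; keeping the $y_i$ in a thin shell of width $\sim\varepsilon^2$ just inside the sphere resolves this tension and simultaneously forces $\Delta\ge\tau_{d-1}(1-O(\varepsilon^2))$. The secondary technical point is the bookkeeping that turns the two $1+O(\varepsilon^2)$ correction factors into the exact constant $\tfrac{c_1}{4}(2^{2(d+\beta)/(d-1)}-1)$ appearing in the statement, which is chosen so as to be compatible with the complementary stability estimate~\eqref{eq:StabilityEstimate} when this lemma is later combined into the proof of Theorem~\ref{thm:Kendall}.
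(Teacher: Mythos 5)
Your proposal is correct and follows essentially the same route as the paper: restrict to a good set of configurations where $\Delta_{d-1}$ is within a factor $1+O(\varepsilon^2)$ of $\tau_{d-1}$ (the paper takes the sublevel set $R(\varepsilon)=\{\Delta_{d-1}\ge(1+(c_1/12)\varepsilon^2)^{-1}\tau_{d-1}\}$ rather than an explicit product of balls, and bounds the radial integral via the mean value theorem after the substitution $s=m_{d,\beta}r^{d+1+2\beta}$ rather than by your direct sandwich, but these are cosmetic differences). One small imprecision: $p=2d\beta+d^2+\nu(d-1)$ need not be nonnegative for all admissible parameters (only $p+1>0$ is guaranteed), so $r^p$ may be decreasing on $[r_0,r_1]$; in that case replace $r_0^p$ by $r_1^p=r_0^p(1+h)^{p/(d-1)}\ge c(d,\beta,\nu)r_0^p$, which costs only a constant.
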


\begin{lemma}\label{lem:Kendall2}
	For all $\varepsilon\in(0,1)$ there is a constant $c_3\in(0,\infty)$, which depends on $d$, $\beta$,  $\varepsilon$ and $\nu$ only, such that for all $a>0$ and $h>0$,
	\begin{align*}
		\PP(\Vol(Z_{\beta,\nu})\in a[1,1+h],\rho(Z_{\beta,\nu})\geq\varepsilon)&\leq c_3\, h\,a^{d+1+2\beta\over d-1}\\
		&\times \exp\Big\{-{m_{d,\beta}\tau_{d-1}^{-{d+1+2\beta\over d-1}}}\Big(1+{c_1\over 2}(2^{2(d+\beta)\over d-1}-1)\varepsilon^2\Big)a^{d+1+2\beta\over d-1}\Big\}.
	\end{align*}
\end{lemma}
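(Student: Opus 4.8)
The plan is to combine the explicit stochastic representation of $Z_{\beta,\nu}$ from Remark~\ref{rem:rep_typical} with the stability estimate \eqref{eq:StabilityEstimate}, following the scheme of Hug and Schneider \cite{HugSchneiderDelaunay} but carrying the extra weight factors $\Delta_{d-1}^{\nu+1}\prod_i(1-\|y_i\|^2)^\beta$ through the computation. First I would record two reductions. Writing $Z_{\beta,\nu}\stackrel{d}{=}\conv(RY_1,\dots,RY_d)$, the assignment $\conv(y_1,\dots,y_d)\mapsto\conv(Ry_1,\dots,Ry_d)$ is a homothety of ratio $R$ about the origin, so $\Vol(Z_{\beta,\nu})=R^{d-1}\Delta_{d-1}(Y_1,\dots,Y_d)$, while $\rho$ is invariant under homotheties and hence $\rho(Z_{\beta,\nu})=\rho(\conv(Y_1,\dots,Y_d))$ depends on the $Y_i$ only. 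Using Lemma~\ref{theo:typical_cell_stoch_rep} I would then express the probability in question as $\alpha_{d,\beta,\nu}$ times the integral of $r^{2d\beta+d^2+\nu(d-1)}e^{-m_{d,\beta}r^{d+1+2\beta}}\,\Delta_{d-1}(y)^{\nu+1}\prod_{i=1}^d(1-\|y_i\|^2)^\beta$ over the pairs $(y,r)$ with $\rho(\conv(y))\ge\varepsilon$ and $r^{d-1}\Delta_{d-1}(y)\in a[1,1+h]$.

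The geometric heart of the argument is to turn $\rho\ge\varepsilon$ into a quantitative volume deficit. For a configuration $y$ with circumradius $r_c$ and circumcentre $z$, the normalised simplex $r_c^{-1}(\conv(y)-z)$ has its vertices on $\SS^{d-2}$ and satisfies $\rho\ge\varepsilon$, so \eqref{eq:StabilityEstimate} gives $\Delta_{d-1}(y)\le(1-c_1\varepsilon^2)\tau_{d-1}\,r_c^{d-1}$. Combining this with the constraint $y_i\in\BB^{d-1}$, which limits how large $r_c$ can be without collapsing $\Delta_{d-1}(y)$, one should obtain that on the event $\{\rho\ge\varepsilon\}$ a volume of at least $a$ can only be produced by a scaling radius satisfying $m_{d,\beta}r^{d+1+2\beta}\ge m_{d,\beta}\tau_{d-1}^{-p}\big(1+\tfrac{c_1}{2}(2^{2(d+\beta)/(d-1)}-1)\varepsilon^2\big)a^{p}$, where $p=\tfrac{d+1+2\beta}{d-1}$. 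This is the step that produces the sharp constant, and it is the point at which the proof departs from the on-sphere situation of \cite{HugSchneiderDelaunay}.

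With this deficit in hand I would evaluate the inner integral over $r$ by substituting the volume $t=r^{d-1}\Delta_{d-1}(y)$, turning it into $\tfrac{1}{d-1}\Delta_{d-1}(y)^{-s}\int_a^{(1+h)a}t^{s-1}e^{-m_{d,\beta}\Delta_{d-1}(y)^{-p}t^p}\,\dd t$ with $s=\tfrac{2d\beta+d^2+\nu(d-1)+1}{d-1}$. Bounding the exponential using the radius lower bound of the previous paragraph extracts the factor $\exp\{-m_{d,\beta}\tau_{d-1}^{-p}(1+\tfrac{c_1}{2}(2^{2(d+\beta)/(d-1)}-1)\varepsilon^2)a^{p}\}$, and since $\int_a^{(1+h)a}t^{s-1}\,\dd t=\tfrac{a^s}{s}((1+h)^s-1)$ is $O(h)$ for bounded $h$, the linear dependence on $h$ follows (for large $h$ the asserted bound already exceeds $1$ and is trivial). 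Integrating the remaining weight $\Delta_{d-1}(y)^{\nu+1}\prod_i(1-\|y_i\|^2)^\beta$ over the admissible configurations produces a finite Beta-type constant; together with $\alpha_{d,\beta,\nu}$ and the factor $\Delta_{d-1}(y)^{-s}$ this is absorbed into $c_3$, and bringing the resulting polynomial order down to the stated $a^{p}$ is achieved by the concentration-near-the-extremal-simplex bookkeeping that underlies \cite{HugSchneiderDelaunay}.

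I expect the main obstacle to be the second paragraph: producing the exact exponential constant $\tfrac{c_1}{2}(2^{2(d+\beta)/(d-1)}-1)\varepsilon^2$ while correctly accounting for the fact that, unlike in the classical Poisson--Delaunay case, the vertices $Y_i$ lie in the interior of $\BB^{d-1}$ rather than on $\SS^{d-2}$. Controlling the circumradius $r_c$ in this regime, and simultaneously keeping the polynomial prefactor at the sharp order $a^{p}$, is where the real work lies; moreover the companion Lemma~\ref{lem:Kendall1} must be arranged to use the same construction (with coefficient $\tfrac{c_1}{4}$ in place of $\tfrac{c_1}{2}$), so that the two estimates combine to give the clean exponential gap in Theorem~\ref{thm:Kendall}.
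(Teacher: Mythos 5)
Your overall scheme---the $(R,Y_1,\dots,Y_d)$ representation, the invariance of $\rho$ under scaling, and the stability estimate \eqref{eq:StabilityEstimate} turning $\rho\ge\varepsilon$ into a volume deficit---is exactly the paper's route (the paper substitutes $s=m_{d,\beta}r^{d+1+2\beta}$ and invokes the mean value theorem rather than your $t=r^{d-1}\Delta_{d-1}(y)$, but that is cosmetic). The genuine gap is in your third paragraph, and it is precisely the step you defer as ``concentration-near-the-extremal-simplex bookkeeping''. After your substitution the integrand carries $\Delta_{d-1}(y)^{-s}$ with $s=\nu+\frac{d^2+2d\beta+1}{d-1}$, and the total exponent $\nu+1-s=1-d-2\beta-\frac{d+1+2\beta}{d-1}$ is strongly negative; if you bound $e^{-m_{d,\beta}\Delta^{-p}t^p}$ (with $p=\frac{d+1+2\beta}{d-1}$) uniformly by its value at the extremal configuration, the remaining configuration integral $\int_{(\BB^{d-1})^d}\Delta^{\nu+1-s}\prod_i(1-\|y_i\|^2)^\beta$ diverges at $\Delta=0$, while if you instead keep $t^{s-1}$ and integrate it over $a[1,1+h]$ you get the prefactor $a^s$ rather than the asserted $a^p$. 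The paper resolves both problems with one device that your plan is missing: the elementary inequality $x^\alpha e^{-x}\le c_4(c,\alpha)e^{-cx}$ of \eqref{eq:16-12-2019-1}, applied to $\xi=m_{d,\beta}(\mathrm{vol}/\Delta)^{p}$ with $\alpha=\frac{d^2+2d\beta-d-2\beta+\nu(d-1)}{d+1+2\beta}$ and $c=\bigl(1-\frac{c_1}{2c_1+1}\varepsilon^2\bigr)^{p}$. Since $p\alpha=s-p$, this single estimate absorbs the surplus polynomial powers of $a$, leaves behind the milder (integrable) weight $\Delta^{\nu+1-p}\prod_i(1-\|y_i\|^2)^\beta$, and costs only an $O(\varepsilon^2)$ fraction of the exponent---which is exactly why the exponential constant of Lemma \ref{lem:Kendall2} can still be arranged to dominate that of Lemma \ref{lem:Kendall1}.

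Relatedly, you mis-locate the source of the constant $\frac{c_1}{2}(2^{2(d+\beta)/(d-1)}-1)$: in the paper it does not come from a refined circumradius analysis, but from combining the immediate bound $\xi\ge m_{d,\beta}\tau_{d-1}^{-p}(a(1+c_1\varepsilon^2))^{p}$ (a consequence of $\Delta\le(1-c_1\varepsilon^2)\tau_{d-1}$) with the loss factor $\bigl(1-\frac{c_1}{2c_1+1}\varepsilon^2\bigr)^{p}$ incurred in the absorption step. The in-ball-versus-on-sphere issue you flag is real---\eqref{eq:StabilityEstimate} is stated for vertices on $\SS^{d-2}$ and the paper applies it to $y_i\in\BB^{d-1}$ without further comment---but the paper treats it as immediate, so it is not where the proof's work lies. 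Finally, your fallback ``for large $h$ the bound is trivial'' does not cover the regime of small $a$ with large $h$; like the paper, you effectively need $h$ bounded, which is all that Theorem \ref{thm:Kendall} requires.
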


From now on we use the convention that $c$ denotes a generic constant whose value can change from occasion to occasion. If $c$ depends on a parameter or on several parameters like $d$, $\beta$ etc.\ this is indicated by writing $c(d)$, $c(d,\beta)$ etc. On the other hand we reserve the symbols $c_1,c_2$ and $c_3$ for the constants that appeared in \eqref{eq:StabilityEstimate}, Lemma \ref{lem:Kendall1} and Lemma \ref{lem:Kendall2}, respectively.

\begin{proof}[Proof of Theorem \ref{thm:Kendall}]
	Given Lemma \ref{lem:Kendall1} and Lemma \ref{lem:Kendall2} the proof of Theorem \ref{thm:Kendall} can now be completed similarly as in \cite{HugSchneiderDelaunay}. Let $I:=[a,b)$, $\varepsilon\in(0,1)$ and $a\geq a_0>0$. If $h_0>(b-a)/a$, we put $h:=(b-a)/a$ and observe that $a[1,1+h)=I$. Then Lemma \ref{lem:Kendall1} yields
	\begin{align*}
		\PP(\Vol(Z_{\beta,\nu})\in I) \geq c_2\,ha^{\nu+{2d\beta+d^2+1\over d-1}}\,e^{-Ua^{d+1+2\beta\over d-1}}
	\end{align*}
	with $U:={m_{d,\beta}\tau_{d-1}^{-{d+1+2\beta\over d-1}}}\big(1+{c_1\over 4}(2^{2(d+\beta)\over d-1}-1)\varepsilon^2\big)$. Similarly, Lemma \ref{lem:Kendall2} implies that
	\begin{align*}
		\PP(\Vol(Z_{\beta,\nu})\in I,\rho(Z_{\beta,\nu})\geq\varepsilon) \leq c_3\,ha^{d+1+2\beta\over d-1}e^{-Va^{d+1+2\beta\over d-1}}
	\end{align*}
	with $V:={m_{d,\beta}\tau_{d-1}^{-{d+1+2\beta\over d-1}}}\big(1+{c_1\over 2}(2^{2(d+\beta)\over d-1}-1)\varepsilon^2\big)$. Using now that $a\geq a_0$ we conclude that
	$$
	\PP(\rho(Z_{\beta,\nu})\geq\varepsilon\,|\,\Vol(Z_{\beta,\nu})\in I) \leq c(d,\beta,\nu,\varepsilon,a_0)\,e^{-(V-U)a^{d+1+2\beta\over d-1}} = c(d,\beta,\nu,\varepsilon,a_0)\,e^{-c(d,\beta)\varepsilon^2a^{d+1+2\beta\over d-1}}.
	$$
	If on the other hand $h_0<(b-a)/a$, then $1+h_0\leq b/a$ and $a[1,1+h_0)\subset I$. Using again Lemma \ref{lem:Kendall1} we have that
	$$
	\PP(\Vol(Z_{\beta,\nu})\in I) \geq c(d,\beta,\nu,\varepsilon,a_0)\,h_0a^{d+1+2\beta\over d-1}\,e^{-Ua^{d+1+2\beta\over d-1}}.
	$$
	Next, using Lemma \ref{lem:Kendall2} we obtain, for $i\in\{0,1,2,\ldots\}$,
	\begin{align*}
		&\PP(\Vol(Z_{\beta,\nu})\in a(1+h_0)^i[1,1+h_0],\rho(Z_{\beta,\nu})\geq\varepsilon)\\
		&\leq c(d,\beta,\nu,\varepsilon)h_0((1+h_0)^ia)^{d+1+2\beta\over d-1}e^{-V(a(1+h_0)^i)^{d+1+2\beta\over d-1}}\\
		&=c(d,\beta,\nu,\varepsilon)h_0((1+h_0)^ia)^{d+1+2\beta\over d-1}e^{-(V-U)(a(1+h_0)^i)^{d+1+2\beta\over d-1}}e^{-U(a(1+h_0)^i)^{d+1+2\beta\over d-1}}\\
		&\leq c(d,\beta,\nu,\varepsilon)h_0a^{d+1+2\beta\over d-1}e^{-Ua^{d+1+2\beta\over d-1}}e^{-{1\over 2}(V-U)\,a^{d+1+2\beta\over d-1}}((1+h_0)^i)^{d+1+2\beta\over d-1}e^{-{1\over 2}(V-U)\,(a_0(1+h_0)^i)^{d+1+2\beta\over d-1}}.
	\end{align*}
	Using that $I\subset\bigcup_{i=0}^\infty a(1+h_0)^i[1,1+h_0]$ we arrive at
	\begin{align*}
		\PP(\Vol(Z_{\beta,\nu})\in I,\rho(Z_{\beta,\nu})\geq\varepsilon) &\leq c(d,\beta,\nu,\varepsilon)h_0a^{d+1+2\beta\over d-1}e^{-Ua^{d+1+2\beta\over d-1}}e^{-{1\over 2}(V-U)\,a^{d+1+2\beta\over d-1}}\\
		&\qquad\times\sum_{i=0}^\infty (1+h_0)^{i{d+1+2\beta\over d-1}}e^{-{1\over 2}(V-U)(a_0(1+h_0)^i)^{d+1+2\beta\over d-1}}.
	\end{align*}
	Since the series in the last line converges we find another constant $c(d,\beta,\nu,\varepsilon, a_0)\in(0,\infty)$ such that 
	$$
	\PP(\Vol(Z_{\beta,\nu})\in I,\rho(Z_{\beta,\nu})\geq\varepsilon) \leq c(d,\beta,\nu,\varepsilon,a_0)h_0a^{d+1+2\beta\over d-1}e^{-Ua^{d+1+2\beta\over d-1}}e^{-{1\over 2}(V-U)\,a^{d+1+2\beta\over d-1}}.
	$$
	Thus,
	\begin{align*}
		\PP(\rho(Z_{\beta,\nu})\geq\varepsilon\,|\,\Vol(Z_{\beta,\nu})\in I) &= {\PP(\Vol(Z_{\beta,\nu})\in I,\rho(Z_{\beta,\nu})\geq\varepsilon)\over \PP(\Vol(Z_{\beta,\nu})\in I)}\\
		&\leq c(d,\beta,\nu,\varepsilon,a_0) e^{-{1\over 2}(V-U)\,a^{d+1+2\beta\over d-1}}\\
		&= c(d,\beta,\nu,\varepsilon,a_0)e^{-c(d,\beta)\varepsilon^2a^{d+1+2\beta\over d-1}}.
	\end{align*}
	This completes the argument.
\end{proof}

\subsection{Proof of the technical lemmas}\label{subsec:ProofLemmasKendall}

In this section we are going to prove the two technical results, Lemma \ref{lem:Kendall1} and Lemma \ref{lem:Kendall2}. 

\begin{proof}[Proof of Lemma \ref{lem:Kendall1}]
	Using the representation for $Z_{\beta,\nu}$ in Lemma \ref{theo:typical_cell_stoch_rep} and the substitution $s=m_{\beta,d}r^{d+1+2\beta}$ we have that
	\begin{align*}
		&\PP(\Vol(Z_{\beta,\nu})\in a[1,1+h])\\
		&=\alpha_{\beta,d,\nu}\int_{(\BB^{d-1})^d}\int_0^\infty{\bf 1}(\Delta_{d-1}(ry_1,\ldots,ry_d)\in a[1,1+h])\\
		&\qquad\times r^{2\beta d+d^2+\nu(d-1)}\,e^{-m_{\beta,d}r^{d+1+2\beta}}\,\Delta_{d-1}(y_1,\ldots,y_d)^{\nu+1}\prod_{i=1}^d(1-\|y_i\|^2)^\beta\,\dint r\dint y_1\ldots\dint y_d\\
		&=c(d,\beta,\nu)\int_{(\BB^{d-1})^d}\int_0^\infty{\bf 1}\Big(s^{d-1\over d+1+2\beta}\in {a\,m_{\beta,d}^{d-1\over d+1+2\beta}\over \Delta_{d-1}(y_1,\ldots,y_d)}[1,1+h]\Big)\\
		&\qquad\times s^{2\beta d+d^2-d-2\beta +\nu(d-1)\over d+1+2\beta}\,e^{-s}\,\Delta_{d-1}(y_1,\ldots,y_d)^{\nu+1}\prod_{i=1}^d(1-\|y_i\|^2)^\beta\,\dint s\dint y_1\ldots\dint y_d.
	\end{align*}
	Applying the mean value theorem for integrals, for fixed $y_1,\ldots,y_d\in\BB^{d-1}$ in general position we can find
	$$
	\xi(y_1,\ldots,y_d) \in [\ell,u],
	$$
	where
	$$
	\ell:=m_{d,\beta}\Big({a\over \Delta_{d-1}(y_1,\ldots,y_d)}\Big)^{d+1+2\beta\over d-1}\qquad\text{and}\qquad u:=m_{d,\beta}\Big({a(1+h)\over \Delta_{d-1}(y_1,\ldots,y_d)}\Big)^{d+1+2\beta\over d-1},
	$$
	such that the last expression is equal to
	\begin{align*}
		&c(d,\beta,\nu)a^{d+1+2\beta\over d-1}((1+h)^{d+1+2\beta\over d-1}-1)\int_{(\BB^{d-1})^d}\xi(y_1,\ldots,y_d)^{2d\beta +d^2-d-2\beta+\nu(d-1)\over d+1+2\beta}\,e^{-\xi(y_1,\ldots,y_d)}\\
		&\qquad\times\Delta_{d-1}(y_1,\ldots,y_d)^{\nu+1-{d+1+2\beta\over d-1}}\prod_{i=1}^d(1-\|y_i\|^2)^\beta\,\dint y_1\ldots\dint y_d.
	\end{align*}
	For $\varepsilon\in(0,1)$ define the set
	\begin{align}\label{eq:DefRepsilon}
		R(\varepsilon) := \{(y_1,\ldots,y_d)\in(\BB^{d-1})^d:\Delta_{d-1}(y_1,\ldots,y_d)\geq (1+(c_1/12)\varepsilon^2)^{-1}\tau_{d-1}\}
	\end{align}
	and observe that $R(\varepsilon)$ has positive measure with respect to the $d$-fold product of the Lebesgue measure on $\BB^{d-1}$. Then, taking into account that $d+1+2\beta\ge d-1$, we obtain the lower bound
	\begin{align*}
		&\PP(\Vol(Z_{\beta,\nu})\in a[1,1+h])\geq c(d,\beta,\nu)\,ha^{d+1+2\beta\over d-1}\int_{R(\varepsilon)}I(y_1,\ldots,y_d)\,\dint y_1\ldots\dint y_d,
	\end{align*}
	where the integrand is given by
	\begin{equation}\label{eq:DefIntegrandLemma2}
		\begin{split}
		I(y_1,\ldots,y_d) &:= \xi(y_1,\ldots,y_d)^{2d\beta +d^2-d-2\beta+\nu(d-1)\over d+1+2\beta}\,e^{-\xi(y_1,\ldots,y_d)}\\
		&\qquad\qquad\times\Delta_{d-1}(y_1,\ldots,y_d)^{\nu+1-{d+1+2\beta\over d-1}}\prod_{i=1}^d(1-\|y_i\|^2)^\beta.
		\end{split}
	\end{equation}
	For $(y_1,\ldots,y_d)\in R(\varepsilon)$ in general position we have that
	\begin{align*}
		\xi(y_1,\ldots,y_d) \geq m_{d,\beta}\Big({a\over \Delta_{d-1}(y_1,\ldots,y_d)}\Big)^{d+1+2\beta\over d-1} \geq m_{d,\beta}\Big({a\over \tau_{d-1}}\Big)^{d+1+2\beta\over d-1},
	\end{align*}
	since $\tau_{d-1}$ is the maximal volume of a simplex with vertices in $\BB^{d-1}$ and also
	\begin{align*}
		\xi(y_1,\ldots,y_d) \leq m_{d,\beta}\Big({a(1+h)\over \Delta_{d-1}(y_1,\ldots,y_d)}\Big)^{d+1+2\beta\over d-1}\leq (1+h_0)^{d+1+2\beta\over d-1}m_{d,\beta}\Big({a\,(1+(c_1/12)\varepsilon^2)\over\tau_{d-1}}\Big)^{d+1+2\beta\over d-1},
	\end{align*}
	where $h_0={c_1\over c_1+12}\varepsilon^2$. 
	This implies that
	\begin{align}
		\nonumber\PP(\Vol(Z_{\beta,\nu})\in a[1,1+h]) &\geq c(d,\beta,\nu)\,h\,a^{{d+1+2\beta\over d-1}{2d\beta+d^2-d-2\beta+\nu(d-1)\over d+1+2\beta}+{d+1+2\beta\over d-1}}\\
		\nonumber&\qquad\times\exp\Big\{-(1+h_0)^{d+1+2\beta\over d-1}m_{d,\beta}\Big({a\,(1+(c_1/12)\varepsilon^2)\over\tau_{d-1}}\Big)^{d+1+2\beta\over d-1}\Big\}\\
		\nonumber &\qquad\times\int_{R(\varepsilon)}\Delta_{d-1}(y_1,\ldots,y_d)^{\nu+1-{d+1+2\beta\over d-1}}\prod_{i=1}^d(1-\|y_i\|^2)^\beta\dint y_1\ldots\dint y_d\\
		\nonumber&=c(d,\beta,\nu,\varepsilon)\,h\,a^{\nu+{2d\beta+d^2+1\over d-1}}\\
		&\qquad\times\exp\Big\{-(1+h_0)^{d+1+2\beta\over d-1}m_{d,\beta}\Big({a\,(1+(c_1/12)\varepsilon^2)\over\tau_{d-1}}\Big)^{d+1+2\beta\over d-1}\Big\},\label{eq:ConstLemma2}
	\end{align}
	since the integral is just a constant depending on $d$, $\beta$, $\nu$ and $\varepsilon$. The choice of $h_0$ and $\varepsilon$ implies that
	$$
	\Big((1+h_0)(1+(c_1/12)\varepsilon^2)\Big)^{d+1+2\beta\over d-1} \leq (1+(c_1/4)\varepsilon^2)^{d+1+2\beta\over d-1}\leq (1+(c_1/4)\varepsilon^2)^{\left\lceil {d+1+2\beta\over d-1}\right\rceil}.
	$$
	Since by definition of the constant $c_1$ we have $(c_1/4)\varepsilon^2 <1$ it follows that for any integer $m\geq 0$ we obtain
	$$
	(1+(c_1/4)\varepsilon^2)^m\leq 1+\Big({c_1\over 4}(2^m -1)\Big)\varepsilon^2.
	$$
	Putting $c_2:=c(d,\beta,\nu,\varepsilon)$, this yields the lower bound
	\begin{align*}
		\PP(\Vol(Z_{\beta,\nu})\in a[1,1+h]) &\geq c_2\,ha^{\nu+{2d\beta+d^2+1\over d-1}}\\
		&\qquad\times\exp\Big\{-m_{d,\beta}\tau_{d-1}^{-{d+1+2\beta\over d-1}}\Big(1+{c_1\over 4}(2^{2(d+\beta)\over d-1}-1)\varepsilon^2\Big)a^{d+1+2\beta\over d-1}\Big\},
	\end{align*}
	for all $h\leq h_0$. This completes the argument.
\end{proof}

\begin{proof}[Proof of Lemma \ref{lem:Kendall2}]
	Repeating the same arguments as at the beginning of the proof of Lemma \ref{lem:Kendall1} we see that
	\begin{align*}
		&\PP(\Vol(Z_{\beta,\nu})\in a[1,1+h],\rho((Z_{\beta,\nu}))\geq\varepsilon)\\
		&= c(d,\beta,\nu)ha^{d+1+2\beta\over d-1}\int_{(\BB^{d-1})^d}\xi(y_1,\ldots,y_d)^{2\beta d+d^2-d-2\beta+\nu(d-1)\over d+1+2\beta}\, e^{-\xi(y_1,\ldots,y_d)}\\
		&\qquad\times\Delta_{d-1}(y_1,\ldots,y_d)^{\nu+1-{d+1+2\beta\over d-1}}\,{\bf 1}(\rho(\conv(y_1,\ldots,y_d))\geq\varepsilon)\prod_{i=1}^d(1-\|y_i\|^2)^\beta\,\dint y_1\ldots\dint y_d.
	\end{align*}
	The geometric stability estimate \eqref{eq:StabilityEstimate} says that $\rho(\conv(y_1,\ldots,y_d))\geq\varepsilon$ implies that 
	$$
	\Delta_{d-1}(y_1,\ldots,y_d)\leq(1-c_1\varepsilon^2)\tau_{d-1}.
	$$
	Thus, if ${\bf 1}(\rho(\conv(y_1,\ldots,y_d))\geq\varepsilon)\neq 0$ we have that
	\begin{align*}
		\xi(y_1,\ldots,y_d)  &\geq m_{d,\beta}\Big({a\over \Delta_{d-1}(y_1,\ldots,y_d)}\Big)^{d+1+2\beta\over d-1} \\
		&\geq m_{d,\beta}\Big({a\over (1-c_1\varepsilon^2)\tau_{d-1}}\Big)^{d+1+2\beta\over d-1}\geq m_{d,\beta}\tau_{d-1}^{-{d+1+2\beta\over d-1}}(a(1+c_1\varepsilon^2))^{d+1+2\beta\over d-1}.
	\end{align*}
	It is easy to ensure that for any $\alpha > 0$ and $c\in (0,1)$ there exists a constant $c_4(c,\alpha)$, such that 
	\begin{align}\label{eq:16-12-2019-1}
		x^{\alpha}e^{-x}\leq c_4(c,\alpha)e^{-cx},\qquad x>0.
	\end{align}
	Thus, we have
	\begin{align*}
		\xi(y_1,\ldots,y_d)&^{2\beta d+d^2-d-2\beta+\nu(d-1)\over d+1+2\beta}\, e^{-\xi(y_1,\ldots,y_d)}\\
		&\leq c(d,\beta,\nu, \varepsilon)\exp\Big\{(1-(c_1/(2c_1+1))\varepsilon^2)^{d+1+2\beta\over d-1}\xi(y_1,\ldots,y_d)\Big\}\\
		&\leq c(d,\beta,\nu, \varepsilon)\exp\Big\{-m_{d,\beta}\tau_{d-1}^{-{d+1+2\beta\over d-1}}a^{d+1+2\beta\over d-1}\Big((1+c_1\varepsilon^2)(1-(c_1/(2c_1+1))\varepsilon^2)\Big)^{d+1+2\beta\over d-1}\Big\}.
	\end{align*}
	Arguing as in proof of Lemma \ref{lem:Kendall1} we obtain
	$$
	\Big((1+c_1\varepsilon^2)(1-(c_1/(2c_1+1))\varepsilon^2)\Big)^{d+1+2\beta\over d-1} \leq \Big(1+{c_1\over 2}(2^{2(d+\beta)\over d-1}-1)\varepsilon^2\Big).
	$$
	Plugging this into the above expression for $\PP(\Vol(Z_{\beta,\nu})\in a[1,1+h],\rho(Z_{\beta,\nu})\geq\varepsilon)$ proves the claim for suitable constant $c_3\in(0,\infty)$ depending only on the parameters mentioned in the lemma.
\end{proof}

\begin{remark}\label{rem:ConstantC4}
	The estimate \eqref{eq:16-12-2019-1} can be made more precise. In fact, we start by writing
	$$
	x^\alpha e^{-x} = x^\alpha e^{-cx}e^{-(1-c)x} \leq e^{-cx}\sup_{x>0}[x^{\alpha}e^{-(1-c)x}].
	$$
	The supremum is attained at $x=\alpha/(1-c)$ so that $c_4(c,\alpha)$ can be chosen as
	$$
	c_4(c,\alpha) = \Big({\alpha\over 1-c}\Big)^\alpha e^{-\alpha}.
	$$
	In particular, if $\delta>0$, $p\geq 1$ and $c=(1-\delta\varepsilon^2)^p$ we have that $
	c_4(c,\alpha) \leq c(\alpha,\delta,p)\varepsilon^{-2\alpha}$ for a suitable constant $c(\alpha,\delta,p)\in(0,\infty)$. We will take advantage of this bound in the proof of Theorem \ref{thm:VolumeTails} below, where we apply this bound with $\alpha={2\beta d+d^2-d-2\beta+\nu(d-1)\over d+1+2\beta}$.
\end{remark}

\subsection{Results about tail asymptotics}

In this section we are interested in the behaviour of the tails of the volume of the $\nu$-weighted typical cell of a $\beta$-Delaunay tessellations at zero and infinity. We will prove that $\PP(\Vol(Z_{\beta,\nu})\leq a)$ behaves like a power of $a$, as $a\to 0$, and that $\PP(\Vol(Z_{\beta,\nu})\leq a)$ decays exponentially, as $a\to\infty$. To state our result we recall that $m_{d,\beta}$ is defined at \eqref{eq:DefMdbeta} and that $\tau_{d-1}$ stands for the volume of a regular simplex with vertices on the $(d-2)$-dimensional sphere $\SS^{d-2}$.

\begin{theorem}[Tail asymptotics for the volume of the $\nu$-weighted typical cell]\label{thm:VolumeTails}
	Fix $d\geq 2$, $\beta>-1$ and $\nu\ge-1$.  
	\begin{itemize}
		\item[(i)] We have that $\lim\limits_{a\to\infty}a^{-{d+1+2\beta\over d-1}}\log\PP(\Vol(Z_{\beta,\nu})\geq a) = -m_{d,\beta}\tau_{d-1}^{-{d+1+2\beta\over d-1}}$ for $d+2\beta+\nu\ge 0$.
		
		\item[(ii)] {Suppose that $d\ge 2(1-\beta)$. Then we have that $\lim\limits_{a\to 0}a^{-(\nu+2)}\,\PP(\Vol(Z_{\beta,\nu})\leq a) = C$ for some constant $C\in(0,\infty)$ only depending on $d$, $\beta$ and $\nu$.}
		\end{itemize}
		\end{theorem}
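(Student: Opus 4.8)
The plan is to base the whole argument on the stochastic representation of Remark~\ref{rem:rep_typical}. Writing $\Delta:=\Delta_{d-1}(Y_1,\ldots,Y_d)$, we have $\Vol(Z_{\beta,\nu})\stackrel{d}{=}R^{d-1}\Delta$, where $R$ and $\Delta$ are independent, $R$ has Lebesgue density $f_R$ proportional to $r\mapsto r^{A}e^{-m_{d,\beta}r^{B}}$ with $A:=2d\beta+d^2+\nu(d-1)$ and $B:=d+1+2\beta$, and $\Delta$ takes values in $(0,\tau_{d-1}]$, since $\tau_{d-1}$ is the maximal volume of a $(d-1)$-simplex with vertices in $\BB^{d-1}$. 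The two tails are then governed by different mechanisms: a large volume can only be produced by a large value of $R$ (as $\Delta$ is bounded), whereas a small volume is, under the stated hypothesis, most cheaply produced by a near-degenerate shape, i.e.\ a small value of $\Delta$.

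For part~(i) I would first record the elementary tail asymptotics $\log\PP(R\geq t)=-m_{d,\beta}t^{B}(1+o(1))$ as $t\to\infty$, which follows from $\int_t^\infty r^{A}e^{-m_{d,\beta}r^{B}}\dd r\sim (m_{d,\beta}B)^{-1}t^{A-B+1}e^{-m_{d,\beta}t^{B}}$. For the upper bound, $\Delta\leq\tau_{d-1}$ gives $\PP(\Vol(Z_{\beta,\nu})\geq a)\leq\PP(R\geq(a/\tau_{d-1})^{1/(d-1)})$, whence $\limsup_{a\to\infty}a^{-B/(d-1)}\log\PP(\Vol(Z_{\beta,\nu})\geq a)\leq-m_{d,\beta}\tau_{d-1}^{-B/(d-1)}$. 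For the matching lower bound I would fix $\delta\in(0,1)$ and use that $p_\delta:=\PP(\Delta\geq(1-\delta)\tau_{d-1})>0$ (the event has positive Lebesgue measure and the joint density of the $Y_i$ is positive on it); independence then gives $\PP(\Vol(Z_{\beta,\nu})\geq a)\geq p_\delta\,\PP(R\geq(a/((1-\delta)\tau_{d-1}))^{1/(d-1)})$, so that $\liminf\geq-m_{d,\beta}((1-\delta)\tau_{d-1})^{-B/(d-1)}$; letting $\delta\downarrow0$ and noting $B/(d-1)=\tfrac{d+1+2\beta}{d-1}$ finishes part~(i).

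For part~(ii) the crucial input is the small-volume behaviour of $\Delta$ itself, namely $G(x):=\PP(\Delta\leq x)\sim c_\Delta\,x^{\nu+2}$ as $x\downarrow0$ for some $c_\Delta\in(0,\infty)$. Granting this, I would set $r_*:=(a/\tau_{d-1})^{1/(d-1)}$ and decompose
\begin{align*}
\PP(\Vol(Z_{\beta,\nu})\leq a)
&= \int_0^\infty f_R(r)\,G\bigl(\min(a/r^{d-1},\tau_{d-1})\bigr)\,\dd r\\
&= \PP(R\leq r_*) + \int_{r_*}^\infty f_R(r)\,G(a/r^{d-1})\,\dd r .
\end{align*}
The first term is of order $r_*^{A+1}$, i.e.\ of order $a^{(A+1)/(d-1)}$, and a direct computation gives $\tfrac{A+1}{d-1}-(\nu+2)=\tfrac{d(d+2\beta-2)+3}{d-1}$, which is strictly positive exactly under $d\geq2(1-\beta)$; hence this term is $o(a^{\nu+2})$. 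In the second term the pointwise limit $a^{-(\nu+2)}G(a/r^{d-1})\to c_\Delta\,r^{-(d-1)(\nu+2)}$, together with the uniform bound $G(x)\leq c\,x^{\nu+2}$ on $(0,\tau_{d-1}]$, permits an application of dominated convergence, yielding $\lim_{a\downarrow0}a^{-(\nu+2)}\PP(\Vol(Z_{\beta,\nu})\leq a)=c_\Delta\,\EE\bigl[R^{-(d-1)(\nu+2)}\bigr]=:C$. The same exponent inequality guarantees integrability of $f_R(r)r^{-(d-1)(\nu+2)}$ at $r=0$, so that $\EE[R^{-(d-1)(\nu+2)}]<\infty$ and $C\in(0,\infty)$.

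The main obstacle is the asymptotics $G(x)\sim c_\Delta x^{\nu+2}$. Here I would disintegrate the joint density of $(Y_1,\ldots,Y_d)$ via the affine Blaschke--Petkantschin formula: conditioning on $H:=\aff(Y_1,\ldots,Y_{d-1})$ and writing $\Delta=\tfrac{1}{d-1}\Delta_{d-2}(Y_1,\ldots,Y_{d-1})\,\dist(Y_d,H)$, a small value of $\Delta$ forces $Y_d$ into a slab of width of order $x$ around $H$, contributing a factor of order $x$, while the explicit weight $\Delta^{\nu+1}$ contributes the factor $x^{\nu+1}$; one must then check that the limiting surface integral over the genuinely degenerate configurations is positive and finite, which is what yields $c_\Delta\in(0,\infty)$. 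Alternatively, and perhaps more economically given what is already available, one can read off both exponent and constant from the explicit moment formula of Lemma~\ref{lm:VolumeMomBeta}: as a function of $s$, $\EE\Vol(Z_{\beta,\nu})^s$ is meromorphic and its rightmost singularity to the left of $s=-\nu-1$ is the simple pole at $s=-(\nu+2)$ produced by the $i=1$ factor $\Gamma(\tfrac{s+\nu+2}{2})$ of the product, the hypothesis $d\geq2(1-\beta)$ being precisely what keeps the competing factors $\Gamma(\tfrac{d(d+\nu+s+2\beta)}{2}+1)$ and $\Gamma(\tfrac{d(d+2\beta)+(\nu+s)(d-1)+1}{2})$ regular and non-vanishing there; a Mellin--Tauberian argument then converts this pole into the $x^{\nu+2}$ behaviour. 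Since justifying the Tauberian passage requires control on vertical lines, I would keep the Blaschke--Petkantschin computation as the primary, more robust route.
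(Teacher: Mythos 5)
For part (i) your argument is correct and genuinely different from the paper's. The paper deduces (i) from the two Kendall lemmas: Lemma \ref{lem:Kendall1} with $\varepsilon=1/a$ (after tracking the $\varepsilon$-dependence of the constant) for the lower bound, and a double decomposition over dyadic volume scales and over the events $\{\rho(Z_{\beta,\nu})\geq k^{-q}\}$ via Lemma \ref{lem:Kendall2} for the upper bound. Your route through $\Vol(Z_{\beta,\nu})\stackrel{d}{=}R^{d-1}\Delta$, the boundedness $\Delta\leq\tau_{d-1}$, the positivity of $\PP(\Delta\geq(1-\delta)\tau_{d-1})$ and the elementary Laplace asymptotics for $\PP(R\geq t)$ is shorter, avoids the stability estimate \eqref{eq:StabilityEstimate} entirely, and does not even use the hypothesis $d+2\beta+\nu\geq 0$, which in the paper is an artifact of applying $x^{\alpha}e^{-x}\leq c_4e^{-cx}$ with $\alpha=\frac{(d-1)(d+2\beta+\nu)}{d+1+2\beta}>0$. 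What your route does not give is the conditional shape information of Theorem \ref{thm:Kendall}, but for Theorem \ref{thm:VolumeTails}(i) alone it is the more economical proof.

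For part (ii) your architecture is sound and in fact aims at more than the paper's proof delivers: the paper establishes $c_{13}a^{\nu+2}\leq\PP(\Vol(Z_{\beta,\nu})\leq a)\leq c_9a^{\nu+2}$, i.e.\ two-sided bounds rather than the existence of the limit, whereas your dominated-convergence scheme would genuinely produce $C=c_\Delta\,\EE[R^{-(d-1)(\nu+2)}]$. However, the entire weight of your argument rests on the asymptotic $G(x)=\PP(\Delta\leq x)\sim c_\Delta x^{\nu+2}$, which you only sketch, and this is a genuine gap. The paper's base-times-height computation yields $\PP(\Vol(P_{\beta,-1})\leq b)\asymp b$ and hence $G(x)\asymp x^{\nu+2}$; upgrading $\asymp$ to $\sim$ requires (a) a dominated-convergence argument for the slab probability $\PP(\dist(H,Y_d)\leq c)/c$ uniformly over hyperplanes $H$, and (b) control of the projected beta density, proportional to $(1-t^2)^{\beta+\frac{d-2}{2}}$, for $|t|$ near $1$, i.e.\ for hyperplanes close to tangency. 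It is precisely there, and not only in your exponent count $\frac{A+1}{d-1}>\nu+2$, that the hypothesis $d\geq 2(1-\beta)$ (equivalently $\beta+\frac{d-2}{2}\geq 0$) enters; your Blaschke--Petkantschin sketch does not identify this. Two smaller points: the exponent inequality $d(d+2\beta-2)+3>0$ is implied by, but not equivalent to, $d\geq2(1-\beta)$, so ``exactly'' is an overstatement; and your alternative Mellin route would still need a Tauberian argument with monotonicity or vertical-line decay, which is not free. Until $G(x)\sim c_\Delta x^{\nu+2}$ is actually proved, part (ii) of your proposal is a plan rather than a proof.
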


{\begin{remark}
\begin{itemize}
\item[(i)]  We remark that the result of Theorem \ref{thm:VolumeTails} (i) reduces to a special case of \cite[Theorem 2]{HSDelaunay2} when we formally put $\beta=-1$, which corresponds to the classical Poisson-Delaunay tessellation. On the other hand, the result of part (ii) of Theorem \ref{thm:VolumeTails} is new even for this classical model.
\item[(ii)] The dimension restriction $d\ge 2(1-\beta)$ in Theorem \ref{thm:VolumeTails} (ii) is in fact restrictive only for $d\in\{2,3\}$. Namely, if $d=2$, it requires that $\beta\geq 0$, for $d=3$ that $\beta\geq -1/2$, while for $d\geq 4$ it is automatically fulfilled, since $\beta>-1$.
\end{itemize}
\end{remark}}

\begin{proof}[Proof of Theorem \ref{thm:VolumeTails}]
	We start with part (i). Using Lemma \ref{lem:Kendall1} with $h=h_0$ gives 
	\begin{align*}
		\PP(\Vol(Z_{\beta,\nu})\geq a) \geq c(d,\beta,\varepsilon,\nu)a^{\nu+{2d\beta+d^2+1\over d-1}}\exp\Big\{-m_{d,\beta}\tau_{d-1}^{-{d+1+2\beta\over d-1}}(1+c(d,\beta)\varepsilon^2)a^{d+1+2\beta\over d-1}\Big\}.
	\end{align*}
	From now on we shall take $\varepsilon=1/a$.
	
	At this place, we need to make more precise the dependence of the constant $c(d,\beta,\varepsilon,\nu)$ on $\varepsilon$. This dependence arises precisely at \eqref{eq:ConstLemma2} and an inspection of the proof there shows that $c(d,\beta,\varepsilon,\nu)$ can be chosen as $\varepsilon^{d-1}c(\nu,d,\beta)$. This is because the the integral over the region $R(\varepsilon)$ defined at \eqref{eq:DefRepsilon} of the function in \eqref{eq:DefIntegrandLemma2} can be bounded from below by a constant multiple $c(d,\beta,\nu)$ of the volume of $R(\varepsilon)$, which in turn is bounded from below by a constant multiple of $\varepsilon^{d-1}$.	Thus,
	\begin{align*}
		\liminf_{a\to\infty}a^{-{d+1+2\beta\over d-1}}\log\PP(\Vol(Z_{\beta,\nu})\geq a) &\geq -m_{d,\beta}\tau_{d-1}^{-{d+1+2\beta\over d-1}}+\liminf_{a\to\infty}a^{-{d+1+2\beta\over d-1}}\log(\varepsilon^{d-1}c(\nu,d,\beta)).
	\end{align*}
	Now, since we took $\varepsilon=1/a$, the last limit vanishes and we arrive at
	\begin{align*}
		\liminf_{a\to\infty}a^{-{d+1+2\beta\over d-1}}\log\PP(\Vol(Z_{\beta,\nu})\geq a) \geq -m_{d,\beta}\tau_{d-1}^{-{d+1+2\beta\over d-1}}.
	\end{align*}

	To obtain an upper bound we let $q>0$ be a parameter to be determined later and write
	\begin{align*}
		\PP(\Vol(Z_{\beta,\nu})\geq a) &= \PP\Big(\Vol(Z_{\beta,\nu})\in\bigcup_{i=0}^\infty2^ia[1,2] ,\bigcup_{k=1}^\infty\{\rho(Z_{\beta,\nu})\geq k^{-q}\}\Big)\\
		&\leq \sum_{i=0}^\infty\sum_{k=1}^\infty\PP(\Vol(Z_{\beta,\nu})\in 2^ia[1,2],\rho(Z_{\beta,\nu})\geq k^{-q}).
	\end{align*}
	The probability in the previous expression can be bounded using Lemma \ref{lem:Kendall2} with $h=1$. In fact, we have that
	\begin{align*}
		&\PP(\Vol(Z_{\beta,\nu})\in 2^ia[1,2],\rho(Z_{\beta,\nu})\geq k^{-q})\\
		&\qquad \leq c(d,\beta,\nu)c(k,q)(2^ia)^{d+1+2\beta\over d-1}\exp\Big\{-m_{d,\beta}\tau_{d-1}^{-{d+1+2\beta\over d-1}}(1+c(d,\beta)k^{-2q})(2^ia)^{d+1+2\beta\over d-1}\Big\}\\
		&\qquad \leq c(d,\beta,\nu)c(k,q)(2^ia)^{d+1+2\beta\over d-1}\exp\Big\{-m_{d,\beta}\tau_{d-1}^{-{d+1+2\beta\over d-1}}a^{d+1+2\beta\over d-1}\Big\}\\
		&\qquad\qquad\times \exp\Big\{-m_{d,\beta}\tau_{d-1}^{-{d+1+2\beta\over d-1}}(2^{i-1}a)^{d+1+2\beta\over d-1}\Big\}\exp\Big\{-m_{d,\beta}\tau_{d-1}^{-{d+1+2\beta\over d-1}}c(d,\beta)k^{-2q}a^{d+1+2\beta\over d-1}\Big\}
	\end{align*}
	for $i\geq 1$
	Moreover, according to Remark \ref{rem:ConstantC4} the constant $c(k,q)$ satisfies
	$$
	c(k,q) \leq c(d,\beta,\nu)k^{-2q{2\beta d+d^2-d-2\beta+\nu(d-1)\over d+1+2\beta}}
	$$
	and we choose now $q=q(d,\beta,\nu)$ as ${d+1+2\beta\over 2\beta d+d^2-d-2\beta+\nu(d-1)}>0$, which ensures that $c(k,q)\leq c(d,\beta,\nu)k^{-2}$. Then
	\begin{align*}
		\PP(\Vol(Z_{\beta,\nu})\geq a) &\leq c(d,\beta,\nu)a^{d+1+2\beta\over d-1}\exp\Big\{-m_{d,\beta}\tau_{d-1}^{-{d+1+2\beta\over d-1}}a^{d+1+2\beta\over d-1}\Big\}\\
		&\qquad\times\sum_{k=1}^\infty c(k,q)\exp\Big\{-m_{d,\beta}\tau_{d-1}^{-{d+1+2\beta\over d-1}}c(d,\beta)k^{-2q}a^{d+1+2\beta\over d-1}\Big\}\\
		&\qquad{\times\Big(\sum_{i=1}^\infty 2^{i\,{d+1+2\beta\over d-1}}\exp\Big\{-m_{d,\beta}\tau_{d-1}^{-{d+1+2\beta\over d-1}}(2^{i-1}a)^{d+1+2\beta\over d-1}\Big\}+1\Big)}.
	\end{align*}
	Since we are interested in the limit as $a\to\infty$, we may suppose that $a\geq 1$, say, from now on. Then
	the series over $i$ converges to a constant only depending on $d$ and $\beta$. Moreover, since $c(k,q)\leq c(d,\beta,\nu)k^{-2}$ also the series over $k$ converges, but this time to a constant only depending on $d$, $\beta$ and on $\nu$. Thus,
	\begin{align*}
		\PP(\Vol(Z_{\beta,\nu})\geq a) &\leq c(d,\beta,\nu)a^{d+1+2\beta\over d-1}\exp\Big\{-m_{d,\beta}\tau_{d-1}^{-{d+1+2\beta\over d-1}}a^{d+1+2\beta\over d-1}\Big\}
	\end{align*}
	for $a\geq 1$, say, and hence
	\begin{align*}
		\limsup_{a\to\infty}a^{-{d+1+2\beta\over d-1}}\log\PP(\Vol(Z_{\beta,\nu})\geq a) \leq -m_{d,\beta}\tau_{d-1}^{-{d+1+2\beta\over d-1}}.
	\end{align*}
	Together with the lower bound this proves (i).
	
	{In order to show (ii) we use the probabilistic representation of the typical cell $Z_{\beta,\nu}$ from Remark \ref{rem:rep_typical}, which says that $Z_{\beta,\nu}$ has the same distribution as $\conv(RY_1,\ldots, RY_d)$ with the radius $R$ being independent of the beta random simplex $P_{\beta,\nu}:=\conv(Y_1,\ldots, Y_d)$. Writing $\PP_R$ for the distribution of the random variable $R$ we may write, for $a>0$,
	\begin{equation}\label{eq_08.04.21_1}
	\begin{split}
	\PP(\Vol(Z_{\beta,\nu})\leq a)&=\PP(R^{d-1}\Vol(P_{\beta,\nu})\leq a)\\
	&=c_0(d,\beta,\nu)\int_0^\infty\int_{(\BB^{d-1})^d}{\bf 1}(\Delta_{d-1}(y_1,\ldots,y_d)\leq ar^{-d+1})\\
	&\qquad\qquad\times\Delta_{d-1}(y_1,\ldots,y_d)^{\nu+1}\prod_{i=1}^{d}(1-\|y_i\|^2)^{\beta}\,\dint y_1\ldots\dint y_d\,\PP_R(\dint r).
	\end{split}
	\end{equation}
	In the next step we will find the upper bound for the inner $d$-fold integral
	\begin{align*}
	I(b) := \int_{(\BB^{d-1})^d}{\bf 1}(\Delta_{d-1}(y_1,\ldots,y_d)\leq b)\Delta_{d-1}(y_1,\ldots,y_d)^{\nu+1}\prod_{i=1}^{d}(1-\|y_i\|^2)^{\beta}\,\dint y_1\ldots\dint y_d
	\end{align*}
	for general $b>0$. Note that, up to a constant depending on $d$, $\beta$ and $\nu$, $I(b)$ is the same as the probability $\PP(\Vol(P_{\beta,\nu})\leq b)$. We start by observing that
	\begin{align*}
	I(b)&
	\leq b^{\nu+1}\int_{(\BB^{d-1})^d}{\bf 1}(\Delta_{d-1}(y_1,\ldots,y_d)\leq b)\prod_{i=1}^{d}(1-\|y_i\|^2)^{\beta}\,\dint y_1\ldots\dint y_d\\
	&= c_1(d,\beta,\nu)b^{\nu+1}\PP(\Vol(P_{\beta,-1})\leq b),
	\end{align*}
	where $c_1(\beta,\nu,d)$ is some positive constant. The random simplex $P_{\beta,-1}$ is the convex hull of independent beta-distributed random points $Z_1,\ldots, Z_d$ in $\BB^{d-1}$ with density proportional to $(1-\|y\|^2)^\beta$. Let $\dist(L,x):=\min_{y\in L}\|x-y\|$ denote the distance between an affine subspace $L\subset \RR^{d-1}$ and a point $x\in\RR^{d-1}$. Then using the well-known base-times-hight-formula for the volume of a $(d-1)$-dimensional simplex we obtain
	\begin{equation}\label{eq_08.04.21_4}
	\begin{aligned}
	\Vol(P_{\beta,-1})&={1\over (d-1)}\Vol(\conv(Z_1,\ldots,Z_{d-1}))\cdot\dist(\aff(Z_1,\ldots, Z_{d-1}),Z_d)\\
	&=:{1\over (d-1)}\Delta_{d-2}(Z_1,\ldots,Z_{d-1})\cdot \dist(\aff(Z_1,\ldots, Z_{d-1}),Z_d),
	\end{aligned}
	\end{equation}
	where $\aff(z_1,\ldots,z_{d-1})$ denotes the affine hull of points $z_1,\ldots, z_{d-1}\in\RR^{d-1}$. Since $Z_d$ is independent of $Z_1,\ldots, Z_{d-1}$ we have that
	\begin{equation}\label{eq_08.04.21_2}
	\begin{split}
	\PP(\Vol(P_{\beta,-1})\leq b)&=\int_{(\BB^{d-1})^{d-1}}\PP\big(\dist(\aff(y_1,\ldots, y_{d-1}),Z_d)\leq (d-1)b\Delta_{d-2}^{-1}(y_1,\ldots,y_{d-1})\big)\\
	&\hspace{4cm}\PP_{Z_1,\ldots,Z_{d-1}}(\dint(y_1,\ldots,y_{d-1})),
	\end{split}
	\end{equation}
	where $\PP_{Z_1,\ldots,Z_{d-1}}$ stands for the joint distribution of $Z_1,\ldots,Z_{d-1}$.
	For a fixed $(d-2)$-dimensional subspace $L\subset\RR^{d-1}$ and for some $c>0$ we consider the probability $\PP(\dist(L,Z_d)\leq c)$. Let $p(L)\in L$ be the unique point satisfying $\dist(L,0)=\|p(L)\|$. It is clear that
	$
	\dist(L,Z_d)=\|p(L)-\proj_{L^{\perp}}(Z_d)\|,
	$
	where $L^{\perp}$ is the line, passing through $p(L)$ and is orthogonal to $L$, and $\proj_{L^{\perp}}:\RR^{d-1}\mapsto L^{\perp}$ denotes the orthogonal projection onto $L^\perp$. Let $I_{L^{\perp}}:L^{\perp}\mapsto\RR$ be an arbitrary but fixed isometry, such that $I_{L^{\perp}}(0)=0$. As a next step we use \cite[Lemma 3.1]{KTZ20},  which ensures that the random variable $I_{L^{\perp}}(\proj_{L^{\perp}}(Z_d))$ has density 
	$$
	t\mapsto {\Gamma(1/2+d/2+\beta)\over \sqrt{\pi}\Gamma(d/2+\beta)}(1-t^2)^{\beta+{d-2\over 2}}{\bf 1}_{[-1,1]}(t).
	$$
	Hence, since $p(L)\in L^{\perp}$ and the distance is invariant under the isometry $I_{L^{\perp}}$ we obtain
	\begin{align*}
	\PP(\dist(L,Z_d)\leq c)=c_2(d,\beta)\int_{-1}^{1}{\bf 1}(|t-p|\leq c)(1-t^2)^{\beta+{d-2\over 2}}\,\dint t,
	\end{align*}
	where $p:=I_{L^{\perp}}(p(L))$. By the mean value theorem there exists $s\in [\max(p-c,-1), \min(p+c,1)]$ such that
		\begin{equation}\label{eq_10.04.21_2}
	\PP(\dist(L,Z_d)\leq c)\leq 2c_2(d,\beta)(1-s^2)^{\beta+{d-2\over 2}}\cdot c \leq c_3(d,\beta)c.
	\end{equation}
	Substituting this into \eqref{eq_08.04.21_2} and using the explcit description of $\PP_{Z_1,\ldots,Z_{d-1}}$ we get
	\begin{align*}
		\PP(\Vol(P_{\beta,-1})\leq b)&\leq c_4(d,\beta)\,b\int_{(\BB^{d-1})^{d-1}} (\Delta_{d-2}(y_1,\ldots,y_{d-1}))^{-1}\prod_{i=1}^{d-2}(1-\|y_i\|^2)^{\beta}\,\dint y_1\ldots \dint y_{d-1}.
	\end{align*}
	Applying the affine Blaschke-Petkantschin formula \cite[Theorem 7.2.7]{SW} it is easy to see that the last integral is finite and since it is independent of $b$ we conclude 
	$$
	\PP(\Vol(P_{\beta,-1})\leq b)\leq c_5(d,\beta)\,b,
	$$
	for some positive constant $c_5(d,\beta)$. Eventually, this implies that $I(b)\leq c_6(d,\beta,d)b^{\nu+2}$ for any $b>0$. Plugging this back into \eqref{eq_08.04.21_1} and using the definition of $\PP_R$ from Remark \ref{rem:rep_typical} we obtain
	\begin{equation}\label{eq_10.04.21_3}
	\begin{aligned}
\PP(\Vol(Z_{\beta},\nu)\leq a)&\leq c_7(d,\beta,\nu)a^{\nu+2}\int_{0}^{\infty}r^{2d\beta+d^2+\nu(d-1)-(\nu+2)(d-1)}e^{-m_{d,\beta} r^{d+1+2\beta}}\,\dint r\\
&= c_8(d,\beta,\nu)a^{\nu+2}\int_{0}^{\infty}s^{2d\beta-2\beta+d^2-3d+2\over d+1+2\beta}e^{-s}\,\dint s\\
&\leq c_9(d,\beta,\nu)a^{\nu+2},
	\end{aligned}
	\end{equation}
	for some positive constant $c_9(d,\beta,\nu)$ independent of $a$. Note that the last integral is in fact finite for $\beta\geq -3/4$ if $d=2$ and all $\beta>-1$ for $d\geq 3$. Since $d\geq 2(1-\beta)$ by assumption, this is always satisfied.}
	
	{In order to obtain a lower bound we note that by \eqref{eq_08.04.21_1},
	\begin{equation}\label{eq_11.04.21_1}
	\PP(\Vol(Z_{\beta},\nu)\leq a)\ge \int_0^\infty \PP\big(\Vol(P_{\beta,\nu})\leq ar^{-d+1}\big)\,{\bf 1}(r^{d-1}\ge 4(d-1)\tau^{-1}_{d-2}a)\,\PP_R(\dint r),
	\end{equation}
	 where $\tau_{d-2}$ denotes the $(d-2)$-dimensional volume of $(d-2)$-dimensional regular simplex with vertices on the unit sphere $\SS^{d-3}$. 
	In what follows we consider the probability $\PP(\Vol(P_{\beta,\nu})\leq b)$ for $b\leq \tau_{d-2}/(4(d-1))$. For any $C\in[0,1)$ we have
	\begin{align*}
	&\PP(\Vol(P_{\beta,\nu})\leq b)\\
	&\quad\ge c_1(d,\beta,\nu)\int_{(\BB^{d-1})^d}{\bf 1}(Cb< \Delta_{d-1}(y_1,\ldots,y_d)\leq b)\Delta_{d-1}(y_1,\ldots,y_d)^{\nu+1}\prod_{i=1}^{d}(1-\|y_i\|^2)^{\beta}\,\dint y_1\ldots\dint y_d\\
	&\quad\geq c_{10}(d,\beta,\nu)C^{\nu+1}b^{\nu+1}\int_{(\BB^{d-1})^d}{\bf 1}(Cb< \Delta_{d-1}(y_1,\ldots,y_d)\leq b)\prod_{i=1}^{d}(1-\|y_i\|^2)^{\beta}\,\dint y_1\ldots\dint y_d\\
	&\quad= c_{10}(d,\beta,\nu)C^{\nu+1}b^{\nu+1}\Big(\PP(\Vol(P_{\beta,-1})\leq b)-\PP(\Vol(P_{\beta,-1})\leq Cb)\Big).
	\end{align*}
	From \eqref{eq_08.04.21_4} and the fact that $\Delta_{d-2}(z_1,\ldots, z_{d-1})\leq \tau_{d-2}$ for all $z_1,\ldots,z_{d-1}\in\BB^{d-1}$, we conclude together with the base-times-high-formula for the volume of $(d-1)$-dimensional simplices that
	\begin{align*}
	&\PP(\Vol(P_{\beta,-1})\leq b)\\
	&\quad\ge \PP(\dist(\aff(Z_1,\ldots, Z_{d-1}),Z_d)\leq (d-1)\tau_{d-2}^{-1}b)\\
	&\quad\ge \PP\big(\dist(\aff(Z_1,\ldots, Z_{d-1}),Z_d)\leq (d-1)\tau_{d-2}^{-1}b\,|\,\dist(\aff(Z_1,\ldots, Z_{d-1}),0)\leq 1/2\big)\\
	&\quad\qquad\qquad\times\PP(\dist(\aff(Z_1,\ldots, Z_{d-1}),0)\leq 1/2)\\
	&\quad=:c_{11}(d,\beta)\PP\big(\dist(\aff(Z_1,\ldots, Z_{d-1}),Z_d)\leq (d-1)\tau_{d-2}^{-1}b\,|\,\dist(\aff(Z_1,\ldots, Z_{d-1}),0)\leq 1/2\big).
	\end{align*}
	Considering a fixed $(d-2)$-dimensional affine subspace $L\subset\RR^{d-1}$ with $\|p(L)\|\leq 1/2$, where $p(L)$ is defined as before, taking $p:=I_{L^{\perp}}(p(L))$ and applying the mean value theorem we have for any $c\leq 1/4$,
	\begin{align*}
	\PP(\dist(L,Z_d)\leq c)&=c_2(d,\beta)\int_{-1}^{1}{\bf 1}(|t-p|\leq c)(1-t^2)^{\beta+{d-2\over 2}}\,\dint t\\
	&=2c_2(d,\beta)(1-s^2)^{\beta+{d-2\over 2}}\cdot c,	
	\end{align*}
	with $s\in [p-c,p+c]$. Since $s\leq p+c\leq 3/4$ we see that $(1-s^2)\ge 7/16>1/4$, which together with the condition $d\ge 2-2\beta$ leads to $\PP(\dist(L,Z_d)\leq c)\ge c_2(d,\beta)2^{-2\beta-d+3}c$. Hence,
	$$
	\PP(\Vol(P_{\beta,-1})\leq b)\ge c_{11}(d,\beta)c_2(d,\beta)2^{-2\beta-d+3}(d-1)\tau_{d-2}^{-1}b.
	$$
	Finally from \eqref{eq_10.04.21_2} with $c=c_{11}(d,\beta)2^{-2\beta-d+1}(d-1)\tau_{d-2}^{-1}b$ we get
	$$
	\PP\big(\Vol(P_{\beta,-1})\leq c_{11}(d,\beta)2^{-2\beta-d+1}(d-1)\tau_{d-2}^{-1} b\big)\leq c_{11}(d,\beta)c_2(d,\beta)2^{-2\beta-d+2}(d-1)\tau_{d-2}^{-1}b.
	$$
	Thus, taking $C=c_{11}(d,\beta)2^{-2\beta-d+1}(d-1)\tau_{d-2}^{-1}<1$ and combining everything together we obtain
	$$
	\PP(\Vol(P_{\beta,\nu})\leq b)\ge c_{12}(d,\beta,\nu)\,b^{\nu+2}.
$$
Substituting this back into \eqref{eq_11.04.21_1} we conclude for $a\leq 1/2$, that
\begin{align*}
\PP(\Vol(Z_{\beta},\nu)\leq a)&\ge c_{12}(d,\beta,\nu)a^{\nu+2}\int_0^\infty r^{-(\nu+2)(d-2)}{\bf 1}(r^{d-1}\ge 2(d-1)\tau_{d-2})\,\PP_R(\dint r)\\
%c_{12}(d,\beta,\nu)a^{\nu+2}\EE_{R}\Big[R^{-(\nu+2)(d-2)}{\bf 1}(R^{d-1}\ge 2(d-1)\tau_{d-2})\Big]\\
&=c_{13}(d,\beta,\nu)a^{\nu+2},
\end{align*}
for some positive constant $c_{13}(d,\beta,\nu)$ independent of $a$. Together with \eqref{eq_10.04.21_3} this finishes the proof of part (ii).}
\end{proof}

\section{Log-volume of weighted typical cells in high dimensions}\label{sec:LimitTheorems}

In this section we investigate the asymptotic probabilistic behaviour of the random variables
$$
Y_{\beta,\nu,d}:=\log \Vol(Z_{\beta, \nu}),\qquad \nu\ge -1,\,\, \beta>-1,
$$
as $d+2\beta+\nu\to\infty$, where we use the same notation as before. In particular, we will use the cumulant method, which allows us to obtain a number of asymptotic probabilistic results, such as a central limit theorem with Berry-Esseen bound, a moderate deviation principle and concentration inequalities, as soon as fine estimates for the cumulants of random variable under consideration are established. At the end of the section we will investigate the mod-$\phi$ convergence and the large deviation behaviour based on the exact formulas for the moment generating function of the random variables $Y_{\beta,\nu,d}$.

Given $m\in\NN$ and a random variable $X$ with $\EE|X|^m<\infty$, let $c_m(X)$ be the $m$th cumulant of $X$, which is formally defined as
\[
c_m(X)=(-\mathfrak{i})^m\frac{\dd^m}{\dd t^m}\log \EE[e^{\mathfrak{i}tX}]\big|_{t=0},
\]
where $\mathfrak{i}$ is the imaginary unit. In particular, $c_1(X)=\EE X$ and $c_2(X)=\VV (X)$. Our first step is to derive bounds for the $m$th cumulant and to determine asymptotic formulas for the expectation and the variance of the random variables $Y_{\beta,\nu,d}$. Due to Lemma \ref{lm:VolumeMomBeta} the cumulants of random variables $Y_{\beta,\nu,d}$ can  partially be expressed as a sum of derivatives of gamma functions, also known as polygamma functions. 

In the next subsection we collect some results about the asymptotic behaviour of polygamma functions and some other auxiliary results, which we will need later. Through this section we will use the following notation. Given two functions $f(x)$ and $g(x)$ we write $f=O(g)$ if $\limsup\limits_{x\to\infty}|f(x)/g(x)|<\infty$ and $f=o(g)$ if $\lim\limits_{x\to\infty}|f(x)/g(x)|=0$.

\subsection{Asymptotics for gamma and polygamma functions}\label{sec:polygamma}

The first very useful result regarding the asymptotic behaviour of gamma function is classical Stirling's formula \cite{NIST}:
\begin{align}
\log\Gamma(x)&=x\log x-x-\frac12\log x+{1\over 2}\log(2\pi)+O(1/x),\qquad \text{as}\  x\rightarrow\infty,\, x\in\RR,\label{eq:AsympGamma}\\
\log(n!)&=n\log n-n+\frac12 \log n+O(1),\hspace{2.4cm}\qquad\text{as}\ n\to\infty,\, n\in\mathbb{N}.\label{eq:Stirling}
\end{align}
Consider the digamma function $\psi(x)=\psi^{(0)}(x):=\frac{\dd}{\dd x}\log\Gamma(x)$ and, more generally, the polygamma function 
\[
\psi^{(m)}(x):=\frac{\dd^m}{\dd x^m}\psi(z)=\frac{\dd^{m+1}}{\dd x^{m+1}}\log\Gamma(x),\qquad m\in\mathbb{N}.
\]
The asymptotic expansions for functions $\psi^{(m)}(x)$ are well-known (see e.g. \cite[p. 260]{AS64}), but our estimates will rely on a more precise and recent results for the functions $\psi(x)$ and $\psi^{(1)}(x)$. Namely, in \cite[Theorem C]{QV04} it was shown that
\begin{equation}\label{eq:AsympDigamma}
\psi(x)=\log x-{1\over 2x}+O(1/x^{2})
\end{equation}
and in \cite{Mor10} the asymptotics
\begin{equation}\label{eq:AsympPolygamma}
\psi^{(1)}(x)={1\over x}+{1\over 2x^2}+O(1/x^{3})
\end{equation}
was obtained, as $x\rightarrow\infty$. Moreover, for any $x\neq 0,-1,-2,\ldots$ one has that
$$
\psi^{(m)}(x)=\sum\limits_{k=0}^{\infty}\frac{(-1)^{m+1}m!}{(x+k)^{m+1}},
$$
see \cite[6.4.10]{AS64}. Hence, we conclude that
\begin{equation}\label{eq:PolygammaBound}
|\psi^{(m)}(x)|\leq \frac{(m-1)!}{x^m}+\frac{m!}{x^{m+1}}.
\end{equation}

The following proposition summarizes the results of \cite[Proposition 3.1 - 3.3]{GusakovaThaeleDelaunay} and provides identities or estimates for sums of polygamma functions.

\begin{proposition}[On sums of polygamma functions]
For any $a\in(0,\infty)$ and $k\in\NN$, $k\ge 2$ we have
\begin{align}
\frac12\sum\limits_{j=1}^k\psi\left({j+a\over 2}\right)&=
\left({k-c\over 2}+{a\over 2}-{1\over 2}\right)\psi (a+k-c-1)+{c\over 2}\psi\left(k+a-1\right)+{1\over 4}\psi\left({k+a\over 2}\right)\notag\\
&\qquad-\left({a\over 2}-{1\over 2}\right)\psi(a+1)-{1\over 4}\psi\left({a\over 2}+1\right)-{k\over 2}\left(1+\log 2\right) +1+2c, \label{eq:DigammaSum}
\end{align}
\begin{align}
&\frac14\sum\limits_{j=1}^k\psi^{(1)}\left({j+a\over 2}\right)\notag={1\over 2}\left(\psi(k+a-c+1)-\psi(a+1)\right)+{a\over 2}\left(\psi^{(1)}(k+a-c+1)-\psi^{(1)}(a+1)\right)\notag\\
&\qquad -{1\over 8}\Big(\psi^{(1)}\Big({k+a-c+1\over 2}\Big)-\psi^{(1)}\Big({a+1\over 2}\Big)\Big)+{k-c\over 2}\psi^{(1)}(k+a-c+1)+{c\over 4}\psi^{(1)}\left({k+a\over 2}\right), \label{eq:DerivativeDigammaSum}
\end{align}
and
\begin{align}
\Big|\sum\limits_{j=1}^k\psi^{(m)}\left({j+a\over 2}\right)\Big|&\leq{2^{m+3}m!\over (a+1)^{m-1}}\label{eq:DerivativeDigammaSumBound}.
\end{align}
where $c:= k\mod 2$, which is equal to $0$ if $k$ is even and equal to $1$ if $k$ is odd.
\end{proposition}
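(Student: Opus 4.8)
The plan is to reduce all three statements to two elementary facts about the polygamma functions. The first is the recurrence $\psi^{(m)}(x+1)-\psi^{(m)}(x)=(-1)^m m!\,x^{-(m+1)}$, which for $m=0$ reads $\psi(x+1)-\psi(x)=1/x$ and follows at once from the representation of $\psi^{(m)}$ in \eqref{eq:PolygammaBound}. The second is the Legendre duplication formula $\psi(x)+\psi(x+\tfrac12)=2\psi(2x)-2\log 2$ together with its derivatives $\psi^{(m)}(x)+\psi^{(m)}(x+\tfrac12)=2^{m+1}\psi^{(m)}(2x)$, $m\ge1$. The duplication formula is the device that converts the half-integer-spaced arguments $\tfrac{j+a}{2}$ into integer-spaced ones, and it is also the only source of the $\log 2$ contributions appearing on the right-hand sides.

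For \eqref{eq:DigammaSum} I would set $u_j:=\psi(\tfrac{j+a}{2})$ and apply the duplication formula with $x=\tfrac{j-1+a}{2}$ to obtain the coupling relation $u_{j-1}+u_j=2\psi(a+j-1)-2\log 2$. Summing this over $j=2,\dots,k$ and rearranging $\sum_{j=2}^k(u_j+u_{j-1})=2\sum_{j=1}^k u_j-u_1-u_k$ collapses the half-spaced sum into the genuinely unit-spaced digamma sum $\sum_{i=1}^{k-1}\psi(a+i)$, plus the two boundary values $u_1=\psi(\tfrac{a+1}{2})$, $u_k=\psi(\tfrac{a+k}{2})$ and the linear term $-2(k-1)\log 2$. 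The unit-spaced sum is then evaluated in closed form by the telescoping identity
\begin{equation*}
\sum_{i=0}^{n-1}\psi(b+i)=(b+n-1)\psi(b+n)-(b-1)\psi(b)-n,
\end{equation*}
which follows by telescoping the one-line consequence $(b+i)\psi(b+i+1)-(b+i-1)\psi(b+i)=\psi(b+i)+1$ of the recurrence. Solving for $\sum_{j=1}^k u_j$ and dividing by two produces a closed form for the left-hand side.

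The remaining task is to bring this closed form into the exact shape of the right-hand side of \eqref{eq:DigammaSum}. Here one uses the recurrence to shift the argument of the leading digamma by one or two units and the duplication formula once more to trade the boundary value $\psi(\tfrac{a+1}{2})$ for the combination $\psi(a+1)$, $\psi(\tfrac a2+1)$ displayed on the right. I expect this bookkeeping to be the main obstacle. The number of integer shifts needed to reach the target leading argument depends on whether $k$ is even or odd, and this is precisely what forces the appearance of the correction parameter $c=k\bmod 2$; carrying the accumulated constants and $\log 2$ terms correctly through these rewrites (the terms $-\tfrac k2(1+\log 2)$ and the $c$-dependent constant) is where off-by-one and sign errors are most likely to occur, and it is worth cross-checking the final constant against a direct numerical evaluation for small even and odd $k$.

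For \eqref{eq:DerivativeDigammaSum} the argument is identical after one differentiation: the coupling relation becomes $\psi^{(1)}(\tfrac{j-1+a}{2})+\psi^{(1)}(\tfrac{j+a}{2})=4\psi^{(1)}(a+j-1)$, and the telescoping identity is replaced by
\begin{equation*}
\sum_{i=0}^{n-1}\psi^{(1)}(b+i)=(b+n-1)\psi^{(1)}(b+n)-(b-1)\psi^{(1)}(b)+\psi(b+n)-\psi(b),
\end{equation*}
whose extra summand $\psi(b+n)-\psi(b)$ explains the simultaneous presence of $\psi$ and $\psi^{(1)}$ terms on the right-hand side of \eqref{eq:DerivativeDigammaSum}. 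Finally, \eqref{eq:DerivativeDigammaSumBound} — which is meaningful, and is needed, only for $m\ge 2$, since for $m\le 1$ the full sum diverges with $k$ — requires no functional equations at all. I would bound each term directly by \eqref{eq:PolygammaBound}, yielding $|\psi^{(m)}(\tfrac{j+a}{2})|\le 2^m(m-1)!(j+a)^{-m}+2^{m+1}m!(j+a)^{-(m+1)}$, and then estimate the two resulting convergent series $\sum_j(j+a)^{-m}$ and $\sum_j(j+a)^{-(m+1)}$ by comparison with the integrals $\int_1^\infty(x+a)^{-m}\,\dd x$ and $\int_1^\infty(x+a)^{-(m+1)}\,\dd x$; these are of order $(a+1)^{-(m-1)}$ and $(a+1)^{-m}$ respectively, and collecting the (generously bounded) constants gives the stated estimate $2^{m+3}m!\,(a+1)^{-(m-1)}$.
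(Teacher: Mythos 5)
Your plan is sound, and since the paper itself offers no proof of this proposition (it is quoted from Propositions 3.1--3.3 of the cited reference \cite{GusakovaThaeleDelaunay}), what you propose is a genuine self-contained derivation rather than a divergence from the paper's argument. All the ingredients you invoke check out: the duplication relation $\psi^{(m)}(x)+\psi^{(m)}(x+\tfrac12)=2^{m+1}\psi^{(m)}(2x)$ for $m\ge1$ (and $=2\psi(2x)-2\log 2$ for $m=0$) correctly collapses the half-integer-spaced sum onto a unit-spaced one; your two telescoping identities for $\sum_i\psi(b+i)$ and $\sum_i\psi^{(1)}(b+i)$ are verified by the recurrence (I checked them at $n=1$ and by telescoping the one-step consequence you state); and the extra $-\tfrac12\log 2$ needed to upgrade $-\tfrac{k-1}{2}\log 2$ to the displayed $-\tfrac{k}{2}\log 2$ does indeed come from one further application of duplication to the boundary term $\psi(\tfrac{a+1}{2})$, exactly as your plan anticipates. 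Your remark that \eqref{eq:DerivativeDigammaSumBound} is meaningful only for $m\ge 2$ is correct and consistent with how the bound is used (only for $\psi^{(m-1)}$ with $m\ge3$), and your integral-comparison bookkeeping does deliver the constant $2^{m+3}$ with room to spare.

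One concrete finding, which your own suggestion to ``cross-check the final constant against a direct numerical evaluation for small even and odd $k$'' would have surfaced: the additive constant in \eqref{eq:DigammaSum} as printed appears to be wrong for odd $k$. For $(k,a)=(3,1)$ the left-hand side is $\tfrac12(\psi(1)+\psi(\tfrac32)+\psi(2))\approx-0.0590$ while the displayed right-hand side evaluates to $\approx 1.4410$; the discrepancy is exactly $\tfrac32$, and the same offset $\tfrac{3c}{2}$ occurs for $(k,a)=(5,1)$ and $(3,2)$, whereas even $k$ matches exactly. The constant should read $1+\tfrac{c}{2}$ rather than $1+2c$. So a correct execution of your plan will \emph{not} reproduce the stated right-hand side verbatim for odd $k$ --- but this is a typo in the statement, not a gap in your argument, and it is harmless for the paper since \eqref{eq:DigammaSum} only enters the expectation asymptotics inside an $O(1)$ term; the variance computation uses \eqref{eq:DerivativeDigammaSum}, which I verified numerically to be correct as stated for both parities.
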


\subsection{Cumulant estimates for the log-volume}

In this section we prove general asymptotic formulas and estimates for the cumulants of the random variables $Y_{\beta,\nu,d}$ depending on all three model parameters $\beta$, $\nu$ and $d$.

\begin{proposition}[General cumulant bound and asymptotics for expectation and variance]\label{lem:CumulantBoundsBeta}
For any $\beta>-1$, $\nu\ge -1$, $\gamma >0$ and $d\ge 2$ we have
\begin{align*}
\EE Y_{\beta,\nu,d}&=-{d-1\over 2(d+2\beta+1)}\log (d+2\beta)-{(d-1)(d+2\beta-1)\over 2(d+2\beta+1)}\log(d(d+2\beta+\nu)-(\nu-1))+{d\over 2}\\
&\qquad-{d-1\over 2}\log d+\Big({d+\nu\over 2}-{1\over 4}\Big)\log(d+\nu)-{2\nu+1\over 4}\log(\nu+2)-{d-1\over d+2\beta+1}\log\gamma+O(1),\\
\VV Y_{\beta,\nu,d}&=-{(d-1)^2(d+2\beta-1)\over 2(d(d+2\beta+\nu)-(\nu-1))(d+2\beta +1)}+{d\over 2(d+2\beta+\nu)^2}+{1\over 2}\psi(d+\nu-c+1)\\
&\qquad+{d+\nu-c\over 2}\psi^{(1)}(d+\nu-c+1)-{1\over 8}\psi^{(1)}\Big({d+\nu-c+1\over 2}\Big)+{c\over 4}\psi^{(1)}\left({d+\nu\over 2}\right)\\
&\quad -{1\over 2}\psi(\nu+2)-{\nu+1\over 2}\psi^{(1)}(\nu+2)+{1\over 8}\psi^{(1)}\Big({\nu+2\over 2}\Big)+O((d+2\beta+\nu)^{-2}),
\end{align*}
as $d+2\beta+\nu\rightarrow\infty$, where $c:= (d-1)\mod 2$. Moreover, for any $2\beta+\nu>-d+1$ and $d, m\in\NN$, $m\geq 3$, $d\ge 3$ we have
\begin{align*}
\left|c_m(Y_{\beta,\nu,d})\right|&\leq {(11d+3+6(\beta+2)^{m-1})(m-1)!\over 4(d+2\beta+\nu)^{m-1}}+{4(m-1)!\over (\nu+2)^{m-2}}.
\end{align*}
\end{proposition}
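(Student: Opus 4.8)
The plan is to exploit that Lemma \ref{lm:VolumeMomBeta} gives not just the moments but the full moment generating function of $Y_{\beta,\nu,d}$: since $Y_{\beta,\nu,d}=\log\Vol(Z_{\beta,\nu})$ we have $\EE e^{sY_{\beta,\nu,d}}=\EE\Vol(Z_{\beta,\nu})^s$, so the cumulant generating function $s\mapsto\log\EE e^{sY_{\beta,\nu,d}}$ is read off by taking the logarithm of the right-hand side of that lemma. The outcome is a linear-in-$s$ part, coming from the factors $((d-1)!)^{-s}$ and the $\gamma$-gamma ratio raised to the power $\tfrac{s(d-1)}{d+2\beta+1}$, plus a finite sum of blocks of the form $\pm a_j\log\Gamma(A_j+\alpha_j s)$ in which every argument is affine in $s$ with explicit slope $\alpha_j$ and constant $A_j$. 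The five $s$-dependent $\log\Gamma$-blocks carry slopes $\tfrac{d-1}{2}$, $\tfrac{d}{2}$, $\tfrac{d-1}{d+2\beta+1}$, $\tfrac12$ (with multiplicity $d$) and $\tfrac12$ (the product over $i$), sitting at $A_1=\tfrac{d(d+2\beta+\nu)-(\nu-1)}{2}$, $A_2=\tfrac{d(d+\nu+2\beta)}{2}+1$, $A_3=d+\tfrac{(\nu-1)(d-1)}{d+2\beta+1}$, $A_4=\tfrac{d+\nu}{2}+\beta+1$, and $\tfrac{i+\nu+1}{2}$ respectively. Differentiating $m$ times and setting $s=0$, with $\tfrac{\dd^m}{\dd s^m}\log\Gamma(A+\alpha s)\big|_{s=0}=\alpha^m\psi^{(m-1)}(A)$, the linear part drops out for $m\geq2$ and
\[
c_m(Y_{\beta,\nu,d})=-\Big(\tfrac{d-1}{2}\Big)^m\psi^{(m-1)}(A_1)+\Big(\tfrac{d}{2}\Big)^m\psi^{(m-1)}(A_2)+\Big(\tfrac{d-1}{d+2\beta+1}\Big)^m\psi^{(m-1)}(A_3)-d\Big(\tfrac12\Big)^m\psi^{(m-1)}(A_4)+\Big(\tfrac12\Big)^m\sum_{i=1}^{d-1}\psi^{(m-1)}\Big(\tfrac{i+\nu+1}{2}\Big),
\]
while $c_1$ additionally carries the linear terms and the digamma values.

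For the expectation ($m=1$) I would insert Stirling's formula \eqref{eq:Stirling} for $\log((d-1)!)$ and \eqref{eq:AsympGamma} for the $\gamma$-gamma ratio, the digamma expansion \eqref{eq:AsympDigamma} for $\psi(A_1),\ldots,\psi(A_4)$, and the closed form \eqref{eq:DigammaSum} with $k=d-1$, $a=\nu+1$ for $\tfrac12\sum_i\psi(\tfrac{i+\nu+1}{2})$; collecting the logarithms produces the claimed expansion of $\EE Y_{\beta,\nu,d}$ up to $O(1)$. For the variance ($m=2$) I would plug the trigamma expansion \eqref{eq:AsympPolygamma} into the four explicit terms and the identity \eqref{eq:DerivativeDigammaSum} into the sum. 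The key algebraic point to verify is that the $A_1$- and $A_3$-contributions combine at leading order into $-\tfrac{(d-1)^2(d+2\beta-1)}{2(d(d+2\beta+\nu)-(\nu-1))(d+2\beta+1)}$, using that $A_1$ and $A_3$ share the numerator $d(d+2\beta+\nu)-(\nu-1)$, while the $A_2$- and $A_4$-contributions have cancelling leading parts of size $\tfrac{d}{2(d+2\beta+\nu)}$ and, once the second-order trigamma corrections are retained, leave $\tfrac{d}{2(d+2\beta+\nu)^2}$ modulo $O((d+2\beta+\nu)^{-2})$; the sum \eqref{eq:DerivativeDigammaSum} reproduces verbatim the polygamma block in the statement.

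For the higher cumulant bound ($m\geq3$) I would estimate the five terms of the displayed formula separately. The polygamma bound \eqref{eq:PolygammaBound} applied to $\psi^{(m-1)}(A_j)$, together with the elementary lower bounds $A_1\geq\tfrac{(d-1)(d+2\beta+\nu)}{2}$, $A_2\geq\tfrac{d(d+2\beta+\nu)}{2}$, $A_3=\tfrac{d(d+2\beta+\nu)-(\nu-1)}{d+2\beta+1}$ and $A_4\geq\tfrac{d+2\beta+\nu}{2}$ (with $A_4\geq\beta+2$ used to isolate the $\beta$-dependence), shows that the first four terms contribute at most a constant multiple of $(d+2\beta+\nu)^{-(m-1)}$ with the polynomial and $(\beta+2)^{m-1}$ prefactor displayed in the statement; the last term is controlled directly by \eqref{eq:DerivativeDigammaSumBound} with $k=d-1$, $a=\nu+1$ and polygamma order $m-1$, producing the summand $\tfrac{4(m-1)!}{(\nu+2)^{m-2}}$.

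I expect the main obstacle to be the variance computation. Unlike the cumulant bound, where crude estimates suffice, here one must keep the first two terms of the trigamma expansion in the $A_2$- and $A_4$-blocks, since their $O(1)$ leading parts cancel and the genuinely surviving contribution is an order of magnitude smaller, while only the leading term is needed for $A_1$ and $A_3$; moreover every remainder must be checked to be uniformly $O((d+2\beta+\nu)^{-2})$ as $d+2\beta+\nu\to\infty$. Tracking these competing cancellations, rather than any single estimate, is the delicate part; by contrast, the identification of the generating function and the differentiation step are essentially mechanical.
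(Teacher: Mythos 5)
Your proposal follows essentially the same route as the paper: read the cumulant generating function off Lemma \ref{lm:VolumeMomBeta}, express $c_m(Y_{\beta,\nu,d})$ as a signed combination of polygamma values at affine-in-$s$ arguments, and then feed in \eqref{eq:AsympGamma}--\eqref{eq:PolygammaBound} together with the sum identities \eqref{eq:DigammaSum}, \eqref{eq:DerivativeDigammaSum} and the bound \eqref{eq:DerivativeDigammaSumBound} for the three cases $m=1$, $m=2$, $m\ge 3$; you also correctly identify the two key cancellations ($A_1$ with $A_3$ via the shared numerator $d(d+2\beta+\nu)-(\nu-1)$, and $A_2$ with $A_4$ at leading order) that the paper exploits. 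The only cosmetic difference is that the paper first applies $\Gamma(x+1)=x\Gamma(x)$ to two of the gamma factors, producing an explicit $-(d-1)\log(d+2\beta+\nu+s)$ term, whereas you keep the shifted arguments inside the polygamma functions; both bookkeepings lead to the same estimates.
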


\begin{proof}
By the definition of cumulants we have
$$
c_m(Y_{\beta,\nu,d})=\frac{\dd^m}{\dd s^m}\left[\log \EE \Vol(Z_{\beta,\nu})^{s}\right]\Big|_{s=0}.
$$
Thus, using Lemma \ref{lm:VolumeMomBeta} and the relation $\Gamma(x+1)=x\Gamma(x)$, $x\in(0,\infty)$, we obtain that $c_m(Y_{\beta,\nu,d})$ is equal to
\begin{align*}
&{\bf 1}_{\{m=1\}}\left[{(d-1)\over d+2\beta+1}\Big({1\over 2}\log\pi+ \log \Gamma\Big({d+2\beta+3\over 2}\Big)-\log \Gamma\Big({d+2\beta+2\over 2}\Big)-\log \gamma\Big)-\log (d-1)!\right]\\
&\qquad+\frac{\dd^m}{\dd s^m}\Bigg[\log\Gamma\Big({d(d+2\beta+\nu)\over 2}+{sd\over 2}\Big)+\log\Gamma\Big(d+{(\nu-1)(d-1)\over d+2\beta+1}+{s(d-1)\over d+2\beta +1}\Big)\\
&\qquad-\log\Gamma\Big({d(d+2\beta+\nu)-(\nu-1)\over 2}+{s(d-1)\over 2}\Big)-d\log\Gamma\Big({d+2\beta+\nu\over 2}+{s\over 2}\Big)\\
&\qquad-(d-1)\log(d+2\beta+\nu+s)+\sum\limits_{i=1}^{d-1}\log\Gamma\left({i+\nu+1\over 2}+{s\over 2}\right)\Bigg]\Bigg|_{s=0},
\end{align*}
after simplification of the resulting expression. This expressoin can be re-written in terms of polygamma functions as follows:
\begin{align*}
&{\bf 1}_{\{m=1\}}\left[{(d-1)\over d+2\beta+1}\Big({1\over 2}\log\pi+ \log \Gamma\Big({d+2\beta+3\over 2}\Big)-\log \Gamma\Big({d+2\beta+2\over 2}\Big)-\log\gamma\Big)-\log (d-1)!\right]\\
&\qquad+{d^m\over 2^m}\psi^{(m-1)}\Big({d(d+2\beta+\nu)\over 2}\Big)+{(d-1)^m\over (d+2\beta +1)^m}\psi^{(m-1)}\Big({d(d+2\beta+\nu)-(\nu-1)\over d+2\beta+1}\Big)\\
&\qquad-{(d-1)^m\over 2^m}\psi^{(m-1)}\Big({d(d+2\beta+\nu)-(\nu-1)\over 2}\Big)-{d\over 2^m}\psi^{(m-1)}\Big({d+2\beta+\nu\over 2}\Big)\\
&\qquad+{1\over 2^m}\sum\limits_{i=1}^{d-1}\psi^{(m-1)}\left({i+\nu+1\over 2}\right)-{(-1)^{m+1}(m-1)!(d-1)\over (d+2\beta+\nu)^m}.
\end{align*}
We now distinguish between the three different cases $m=1$, $m=2$ and $m\geq 3$. If $m=1$, $c_1(Y_{\beta,\nu,d})=\EE Y_{\beta,\nu,d}$ and the previous expression simplifies to
\begin{align*}
&{d-1\over d+2\beta+1}\Big(\log \Gamma\Big({d+2\beta+3\over 2}\Big)-\log \Gamma\Big({d+2\beta+2\over 2}\Big)+\psi\Big({d(d+2\beta+\nu)-(\nu-1)\over d+2\beta+1}\Big)-\log\gamma\Big)\\
&\qquad-\log (d-1)!+{d\over 2}\psi\Big({d(d+2\beta+\nu)\over 2}\Big)-{(d-1)\over 2}\psi\Big({d(d+2\beta+\nu)-(\nu-1)\over 2}\Big)\\
&\qquad-{d\over 2}\psi\Big({d+2\beta+\nu\over 2}\Big)+{1\over 2}\sum\limits_{i=1}^{d-1}\psi\left({i+\nu+1\over 2}\right)+O(1),
\end{align*}
where we used the fact that $\beta>-1$. Applying the asymptotic relations \eqref{eq:AsympGamma} and \eqref{eq:AsympDigamma} together with \eqref{eq:DigammaSum}, and taking into account that $d+2\beta+\nu\to\infty$, we obtain that $\EE Y_{\beta,\nu,d}$ equals to
\begin{align*}
&{d-1\over d+2\beta+1}\Big({d+2\beta+2\over 2}\log\Big(1+{1\over d+2\beta+2}\Big)+{1\over 2}\log (d+2\beta+3)+\log(d(d+2\beta+\nu)-(\nu-1))\\
&\quad-\log(d+2\beta+1)\Big)-\log d!+\log d+{d\over 2}\log d -{(d-1)\over 2}\log (d(d+2\beta+\nu)\\
&\quad-(\nu-1))-{\nu\over 2}\log(\nu+2)-{1\over 4}\log(\nu+3)+{d+\nu-1-c\over 2}\log(\nu+d-c-1)+{c\over 2}\log\left(\nu+d-1\right)\\
&\quad+{1\over 4}\log(\nu+d)-{d\over 2}-{d-1\over d+2\beta+1}\log\gamma+O(1).
\end{align*}
Using the Taylor expression for the logarithm with the Lagrange form of the remainder, namely,
$$
\log(x+a)=\log(x)+{a\over x}+O(a^2/x^2),
$$ 
and Stirling's formula  \eqref{eq:Stirling} we have
\begin{align*}
\EE Y_{\beta,\nu,d}&=-{d-1\over 2(d+2\beta+1)}\log (d+2\beta)-{(d-1)(d+2\beta-1)\over 2(d+2\beta+1)}\log(d(d+2\beta+\nu)-(\nu-1))\\
&\quad-{d-1\over 2}\log d+\Big({d+\nu\over 2}-{1\over 4}\Big)\log(d+\nu)-{2\nu+1\over 4}\log(\nu+2)+{d\over 2}-{d-1\over d+2\beta+1}\log\gamma+O(1).
\end{align*}
Next, we turn to the case $m=2$. We have that $c_2(Y_{\mu, n})=\VV Y_{\mu,n}$ equals to
\begin{align*}
&{d^2\over 4}\psi^{(1)}\Big({d(d+2\beta+\nu)\over 2}\Big)+{(d-1)^2\over (d+2\beta +1)^2}\psi^{(1)}\Big({d(d+2\beta+\nu)-(\nu-1)\over d+2\beta+1}\Big)-{d\over 4}\psi^{(1)}\Big({d+2\beta+\nu\over 2}\Big)\\
&\qquad-{(d-1)^2\over 4}\psi^{(1)}\Big({d(d+2\beta+\nu)-(\nu-1)\over 2}\Big)+{1\over 4}\sum\limits_{i=1}^{d-1}\psi^{(1)}\left({i+\nu+1\over 2}\right)+{d-1\over (d+2\beta+\nu)^2}.
\end{align*}
Taking into account that $d+2\beta+\nu\to\infty$ and using \eqref{eq:AsympPolygamma} and \eqref{eq:DerivativeDigammaSum} together with a Taylor expansion of the function $1/x$ with the Lagrange form of the remainder we conclude that
\begin{align*}
\VV Y_{\beta,\nu,d}&=-{(d-1)^2(d+2\beta-1)\over 2(d(d+2\beta+\nu)-(\nu-1))(d+2\beta +1)}+{d\over 2(d+2\beta+\nu)^2}+{1\over 2}\psi(d+\nu-c+1)\\
&\qquad+{d+\nu-c\over 2}\psi^{(1)}(d+\nu-c+1)-{1\over 8}\psi^{(1)}\Big({d+\nu-c+1\over 2}\Big)+{c\over 4}\psi^{(1)}\left({d+\nu\over 2}\right)\\
&\quad -{1\over 2}\psi(\nu+2)-{\nu+1\over 2}\psi^{(1)}(\nu+2)+{1\over 8}\psi^{(1)}\Big({\nu+2\over 2}\Big)+O\Big({1\over (d+2\beta+\nu)^2}\Big).
\end{align*}
This proves the first two assertions of the proposition. 

We turn now to the case that $m\geq 3$. Applying the estimates \eqref{eq:PolygammaBound} and \eqref{eq:DerivativeDigammaSumBound} for $d\ge 3$ and $\nu+2\beta+d>1$, we get
\begin{align*}
\left|c_m(Y_{\beta,\nu,d})\right|&\leq {d^m\over 2^m}\Big|\psi^{(m-1)}\Big({d(d+2\beta+\nu)\over 2}\Big)\Big|+\Big|\psi^{(m-1)}\Big({d(d+2\beta+\nu)-(\nu-1)\over d+2\beta+1}\Big)\Big|\\
&\qquad+{(d-1)^m\over 2^m}\Big|\psi^{(m-1)}\Big({d(d+2\beta+\nu)-(\nu-1)\over 2}\Big)\Big|+{d\over 2^m}\Big|\psi^{(m-1)}\Big({d+2\beta+\nu\over 2}\Big)\Big|\\
&\qquad+\Big|{1\over 2^m}\sum\limits_{i=1}^{d-1}\psi^{(m-1)}\left({i+\nu+1\over 2}\right)\Big|+{(m-1)!(d-1)\over (d+2\beta+\nu)^{m-1}}.
\end{align*}
Since $d+2\beta+\nu>d-2+\nu>\nu-1$ we obtain the upper bound
\begin{align*}
\left|c_m(Y_{\beta,\nu,d})\right|&\leq {d(m-2)!\over (d+2\beta+\nu)^{m-1}}+{2(m-1)!\over (d+2\beta+\nu)^{m}}+{(m-2)!(\beta+2)^{m-1}\over (d+2\beta+\nu)^{m-1}}+{(m-1)!(\beta+2)^{m}\over (d+2\beta+\nu)^{m}}\\
&\qquad+{(d-1)(m-2)!\over 2(d+2\beta+\nu)^{m-1}}+{d(m-1)!\over (d+2\beta+\nu)^{m}}+{(d-1)(m-1)!\over (d+2\beta+\nu)^{m-1}}+{4(m-1)!\over (\nu+2)^{m-2}}\\
&\leq {(11d+3+6(\beta+2)^{m-1})(m-1)!\over 4(d+2\beta+\nu)^{m-1}}+{4(m-1)!\over (\nu+2)^{m-2}}.
\end{align*}
This completes the proof.
\end{proof}

The results of Proposition \ref{lem:CumulantBoundsBeta} allows us to consider the situation, when some of the parameters $d$, $\beta$ and $\nu$ stay fixed and the other(s) tend to infinity with possibly different speed. In particular, assuming that $\beta>-1$ is some fixed number and using the Taylor expansion of the logarithm with the Lagrange form of the remainder we have
\begin{align*}
\log(d+2\beta)&=\log d + O(1/d),\\
\log(d(d+2\beta+\nu)-(\nu-1))&=\log((d-1)(\nu+d-1))+O((d+\nu)^{-1}).
\end{align*}
Substituting these estimates into the expression for the expectation in Proposition \ref{lem:CumulantBoundsBeta} we obtain
\begin{align*}
\EE Y_{\beta,\nu,d}&=-\Big({1\over 2}+O(1/d)\Big)(\log d + O(1/d))-\Big({d-3\over 2}+{2+2\beta\over d+2\beta+1}\Big)\log((d-1)(\nu+d-1))\\
&\qquad-{d-1\over 2}\log d+\Big({d+\nu\over 2}-{1\over 4}\Big)\log(d+\nu)-{2\nu+1\over 4}\log(\nu+2)+{d\over 2}-\log\gamma+O(1)\\
&=-{2d-3\over 2}\log d+\Big({5+2\nu\over 4}+{2+2\beta\over d+2\beta+1}\Big)\log(\nu+d)-{2\nu+1\over 4}\log(\nu+2)+{d\over 2}-\log\gamma+O(1).
\end{align*}
Next, we consider the variance in the same situation. Applying \eqref{eq:AsympDigamma}, \eqref{eq:AsympPolygamma} and the Taylor expansion of the logarithm and of the function $1/x$,
$$
{1\over x+a}={1\over x}-{a\over x^2}+O(a^2/x^3),
$$
we get
\begin{align*}
\VV Y_{\beta,\nu,d}&=-{2d-1\over 4(d+\nu)}+{(\beta+1)d\over (d+\nu)^2}+{(d-1)\over (d+\nu+2\beta+1)(d+2\beta +1)}+{1\over 2}\log(d+\nu)\\
&\qquad+{1\over 2}-{1\over 2}\psi(\nu+2)-{\nu+1\over 2}\psi^{(1)}(\nu+2)+{1\over 8}\psi^{(1)}\Big({\nu+2\over 2}\Big)+O((d+\nu)^{-2}).
\end{align*}
It should be noted, that by Remark \ref{rm:BetaToPoisson} the random variables $Y_{\beta,\nu,d}$ converges weakly to the random variable $Y_{-1,\nu,d}$, as $\beta\to -1$, where $Y_{-1,\nu,d}$ is a logarithmic volume of the $\nu$-weighted typical cell in the classical Poisson-Delaunay tessellation in $\RR^{d-1}$ with intensity ${2\gamma\pi^{d/2}\over \Gamma({d\over 2})}$. This case was considered in \cite{GusakovaThaeleDelaunay} and the case of fixed $\beta$ can be treated analogously.

In this article we focus from now on on the particularly interesting situation when $\nu$ is some fixed number. This covers, for example, the case of the typical ($\nu=0$) cell and the zero-cell ($\nu=1$) of the $\beta$-Delaunay tessellation. For simplicity we let $\beta$ be fixed as well.

\begin{corollary}[Cumulant bound, expectation and variance asymptotics for fixed model parameters]\label{cor:CumulantsTypicalBeta}
Fix $\nu\ge -1, \beta>-1$ and $\gamma>0$. Then, as $d\to\infty$,
\begin{align*}
\EE Y_{\beta,\nu,d}&=-d\log d+{11+2\nu\over 4}\log d+{d\over 2}-\log\gamma+O(1),\\
\VV Y_{\beta,\nu,d}&={1\over 2}\log d+C_{\nu}+O(1/d),
\end{align*}
where 
$$
C_{\nu}:=-{1\over 2}\psi(\nu+2)-{\nu+1\over 2}\psi^{(1)}(\nu+2)+{1\over 8}\psi^{(1)}\Big({\nu+2\over 2}\Big),
$$
and the hidden constant in $O$-big notation only depends on $\nu$ and $\beta$. Moreover, for $d>\max(4,1-2\beta-\nu)$ and $m\ge 3$ we have that
$$
|c_m(Y_{\beta,\nu,d})|\leq {(16+2(\beta+2)^{m-1})(m-1)!\over (\nu+2)^{m-2}}\leq {18(\beta+2)^{m-1}(m-1)!\over (\nu+2)^{m-2}}.
$$
\end{corollary}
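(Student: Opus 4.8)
The plan is to obtain the Corollary as a direct specialization of Proposition \ref{lem:CumulantBoundsBeta}, exploiting that the fixed-$\beta$ simplifications of the expectation and variance have already been carried out in the text preceding the statement. Concretely, I would start from those two intermediate formulas (valid for fixed $\beta>-1$ and $d+\nu\to\infty$) and impose in addition that $\nu$ is fixed, so that the only remaining asymptotic parameter is $d\to\infty$. The whole argument then reduces to isolating the leading $d\log d$ and $\log d$ contributions, collecting the genuinely constant terms, and checking that everything else is absorbed into the claimed error. Throughout I would keep track of the fact that all implicit constants depend only on $\beta$ and $\nu$.

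For the expectation I would write $-\tfrac{2d-3}{2}\log d=-d\log d+\tfrac32\log d$ and use $\log(\nu+d)=\log d+O(1/d)$ together with $\tfrac{2+2\beta}{d+2\beta+1}\log(\nu+d)=O(d^{-1}\log d)=O(1)$. The coefficient of $\log d$ then becomes $-d+\tfrac32+\tfrac{5+2\nu}{4}=-d+\tfrac{11+2\nu}{4}$, while the term $-\tfrac{2\nu+1}{4}\log(\nu+2)$ is a $\nu$-dependent constant swept into $O(1)$; this yields the stated formula for $\EE Y_{\beta,\nu,d}$. For the variance the crucial point is a cancellation: expanding $-\tfrac{2d-1}{4(d+\nu)}=-\tfrac12+O(1/d)$ produces a $-\tfrac12$ that annihilates the explicit $+\tfrac12$ appearing in the intermediate formula. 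The two rational terms and the residual $O((d+\nu)^{-2})$ are all $O(1/d)$, the term $\tfrac12\log(d+\nu)=\tfrac12\log d+O(1/d)$ supplies the leading behaviour, and the three polygamma terms assemble precisely into $C_\nu$, giving $\VV Y_{\beta,\nu,d}=\tfrac12\log d+C_\nu+O(1/d)$.

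For the cumulant bound with $m\ge3$ I would argue purely algebraically from the general estimate in Proposition \ref{lem:CumulantBoundsBeta}. The idea is to trade one factor of $(d+2\beta+\nu)$ in the denominator $(d+2\beta+\nu)^{m-1}$ against the $(\nu+2)^{m-2}$ in the target: since $d\ge5$ and $\beta>-1$, $\nu\ge-1$ give $d+2\beta+\nu>d-2+\nu\ge\nu+2$, we have $(d+2\beta+\nu)^{m-2}\ge(\nu+2)^{m-2}$, so the first summand is at most $\tfrac{(11d+3+6(\beta+2)^{m-1})(m-1)!}{4(d+2\beta+\nu)(\nu+2)^{m-2}}$. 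It then remains to bound the scalar prefactor $\tfrac{11d+3+6(\beta+2)^{m-1}}{4(d+2\beta+\nu)}$ by $12+2(\beta+2)^{m-1}$: using $d+2\beta+\nu>\tfrac{2d}{5}$ (valid for $d\ge5$) gives $\tfrac{11d+3}{4(d+2\beta+\nu)}<\tfrac{55}{8}+\tfrac{15}{8d}\le 12$, and using $d+2\beta+\nu>1$ gives $\tfrac{6(\beta+2)^{m-1}}{4(d+2\beta+\nu)}\le 2(\beta+2)^{m-1}$. Adding back the second summand $\tfrac{4(m-1)!}{(\nu+2)^{m-2}}$ produces the constant $16$, and the final inequality follows from $(\beta+2)^{m-1}>1$, whence $16+2(\beta+2)^{m-1}\le 18(\beta+2)^{m-1}$.

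I expect the only real friction to be bookkeeping rather than any genuine difficulty: one must keep careful track of which lower bound for $d+2\beta+\nu$ is invoked at each step, and verify that the explicit numerical constants close up. The dimension hypothesis is exactly tailored for this — the condition $d>4$ (so $d\ge5$) is precisely what makes $d+2\beta+\nu>\tfrac{2d}{5}$ and $d+2\beta+\nu>\nu+2$ hold simultaneously, while $d>1-2\beta-\nu$ guarantees $d+2\beta+\nu>1$ — so these elementary estimates go through with the stated constants.
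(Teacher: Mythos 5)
Your proposal is correct and follows exactly the route the paper intends: the corollary is a specialization of Proposition \ref{lem:CumulantBoundsBeta} together with the fixed-$\beta$ formulas derived just before the statement, and your bookkeeping (the $\tfrac32+\tfrac{5+2\nu}{4}=\tfrac{11+2\nu}{4}$ collection, the cancellation of $\pm\tfrac12$ in the variance, and the trade of one factor of $d+2\beta+\nu$ against $(\nu+2)^{m-2}$ using $d\ge 5$ and $d+2\beta+\nu>1$) closes up with the stated constants. No gaps.
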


\subsection{Central limit theorem and moderate deviations}

As was already mentioned above, based on cumulant bounds for the random variables $Y_{\beta,\nu,d}$ we can prove a number of probabilistic limit theorems for the logarithmic volume of the $\nu$-weighted typical cell $Z_{\beta,\nu}$ of the $\beta$-Delaunay tessellation. 

For completeness, we recall the definition of a large and moderate deviation principle for a sequence of random variables. Given a sequence $(\mu_d)_{d\in\mathbb{N}}$ of probability measures on a topological space $E$, we say that it fulfils a large deviation principle with speed $a_d$ and (good) rate function $I:E\rightarrow [0;\infty]$, if $I$ is lower semi-continuous and has compact level sets, and if for every Borel set $B\subseteq E$ we have
\[
-\inf_{x\in \inter(B)} I(x)\leq \liminf_{d\rightarrow \infty}a_d^{-1}\log\mu_d(B)\leq \limsup_{d\rightarrow\infty}a_d^{-1}\log \mu_d(B)\leq -\inf_{x\in \cl(B)} I(x),
\]
where $\inter(B)$ and $\cl(B)$ stand for the interior and the closure of $B$, respectively. We say that a sequence $(X_d)_{d\in\mathbb{N}}$ of random variables satisfies a large deviations principle if the sequence of their distributions does. Moreover, if the rescaling $a_d$ lies between that of a law of large numbers and that of a distributional (often a central) limit theorem, we will say that a sequence $(X_d)_{d\in\mathbb{N}}$ satisfies a moderate deviations principle with speed $a_d$ and rate function $I$, see \cite{DZ}.

We consider the centred and normalized random random variables
$$
\widetilde Y_{\beta, \nu, d}:=\frac{Y_{\beta, \nu, d}-\EE Y_{\beta, \nu, d}}{\sqrt{\VV Y_{\beta, \nu, d}}}
$$
and define
$$
\varepsilon_{\beta,\nu,d}:={2(\beta+2)\over (\nu+2)\sqrt{\log d}}.
$$
In what follows we denote by $\Phi(\,\cdot\,)$ the distribution function of a standard Gaussian random variable.

\begin{theorem}[Berry-Esseen bound and moderate deviations for the log-volume]\label{thm:Main}
	Suppose that $\nu\ge -1$ and $\beta>-1$ are some fixed real numbers. Then the following assertions hold.
	\begin{itemize}
			\item[(i)] There exists a constant $c\in(0,\infty)$ such that for all sufficiently large $d$,
		$$
		\sup_{y\in\RR}|\PP(\widetilde{Y}_{\beta,\nu,d}\leq y)-\Phi(y)| \leq c\,\varepsilon_{\beta,\nu,d}.
		$$
		
		\item[(ii)] Let $(a_d)_{d\in\NN}$ be a sequences of positive real numbers such that
		$$
		\lim_{d\to\infty}a_d=\infty\qquad\text{and}\qquad \lim_{d\to\infty}a_d\,\varepsilon_{\beta,\nu,d}= 0.
		$$
		Then the sequence of random variables $(a_d^{-1}\widetilde{Y}_{\beta,\nu,d})_{d\in\NN}$ satisfies a moderate deviations principle on $\RR$ with speed $a_d^2$ and rate function $I(x)=x^2/2$.
			\end{itemize}
\end{theorem}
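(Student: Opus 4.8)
The plan is to derive both assertions from the cumulant method, for which Corollary \ref{cor:CumulantsTypicalBeta} already supplies exactly the input that is needed. Write $\sigma_d^2:=\VV Y_{\beta,\nu,d}$. Since cumulants are homogeneous of degree $m$ under affine rescaling for $m\ge 2$, the cumulants of the normalised variable satisfy $c_m(\widetilde Y_{\beta,\nu,d})=\sigma_d^{-m}c_m(Y_{\beta,\nu,d})$ for $m\ge 2$, while $c_1(\widetilde Y_{\beta,\nu,d})=0$ and $c_2(\widetilde Y_{\beta,\nu,d})=1$ by construction. The first step is therefore to combine the cumulant bound $|c_m(Y_{\beta,\nu,d})|\le 18(\beta+2)^{m-1}(m-1)!/(\nu+2)^{m-2}$ from Corollary \ref{cor:CumulantsTypicalBeta} (valid for $m\ge 3$ and $d$ large enough) with the variance asymptotics $\sigma_d^2=\tfrac12\log d+C_\nu+O(1/d)$, in particular with the resulting lower bound $\sigma_d^2\ge c\log d$.

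Second, I would rearrange this into the standard Statulevi\v{c}ius normal form. Factoring out one power of $\sigma_d^2$ and using $(m-1)!\le m!$ gives, for all $m\ge 3$ and all sufficiently large $d$,
\[
|c_m(\widetilde Y_{\beta,\nu,d})| \le \frac{18(\beta+2)}{\sigma_d^2}\,m!\,\Big(\frac{\beta+2}{(\nu+2)\sigma_d}\Big)^{m-2} \le \frac{m!}{\Delta_d^{m-2}},
\qquad \Delta_d:=\frac{(\nu+2)\sigma_d}{\beta+2},
\]
where the last inequality uses that the stray prefactor $18(\beta+2)/\sigma_d^2\le 1$ once $\sigma_d^2$ is large. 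This is exactly the cumulant condition with exponent $\gamma=0$ and rate $\Delta_d\asymp\sqrt{\log d}$; moreover $\Delta_d^{-1}\asymp\varepsilon_{\beta,\nu,d}$ because $\sigma_d\sim\sqrt{\tfrac12\log d}$, and a direct computation gives $\Delta_d\,\varepsilon_{\beta,\nu,d}\to\sqrt2$.

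Third, with this cumulant condition established, assertion (i) follows from the general Berry--Esseen bound of the cumulant method (in the form of Saulis and Statulevi\v{c}ius, see also D\"oring and Eichelsbacher), which under a $\gamma=0$ condition yields $\sup_{y\in\RR}|\PP(\widetilde Y_{\beta,\nu,d}\le y)-\Phi(y)|\le c\,\Delta_d^{-1/(1+2\gamma)}=c\,\Delta_d^{-1}$; since $\Delta_d^{-1}\asymp\varepsilon_{\beta,\nu,d}$ this is precisely the claimed bound. For assertion (ii), the same framework delivers a moderate deviations principle for $a_d^{-1}\widetilde Y_{\beta,\nu,d}$ at speed $a_d^2$ with Gaussian rate $I(x)=x^2/2$, valid whenever $a_d\to\infty$ and $a_d=o(\Delta_d^{1/(1+2\gamma)})=o(\Delta_d)$. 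The stated hypothesis $a_d\,\varepsilon_{\beta,\nu,d}\to 0$ is exactly this admissible range, again because $\varepsilon_{\beta,\nu,d}\asymp\Delta_d^{-1}$.

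On the obstacles: the genuine analytic difficulty --- the fine polygamma-based estimates for the cumulants --- is already encapsulated in Corollary \ref{cor:CumulantsTypicalBeta}, so the remaining steps are largely bookkeeping. The one point demanding care is the passage to the normal form in the second step: one must justify absorbing the prefactor $18(\beta+2)/\sigma_d^2$ (which forces the restriction to large $d$ and relies on the variance lower bound $\sigma_d^2\ge c\log d$), and one must keep track of the fact that $\gamma=0$, so that the Berry--Esseen rate is the full $\Delta_d^{-1}$ and the admissible moderate-deviation regime is exactly $a_d=o(\Delta_d)$, matching the condition imposed on $(a_d)_{d\in\NN}$. No sharper control of constants beyond these order-of-magnitude identifications is required.
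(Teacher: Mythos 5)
Your proposal is correct and follows essentially the same route as the paper: both verify the Statulevi\v{c}ius-type cumulant condition \eqref{eq:CumBoundAbstract} with $\delta=0$ and $\Delta_d\asymp\varepsilon_{\beta,\nu,d}^{-1}$ by combining the cumulant bound and variance asymptotics of Corollary \ref{cor:CumulantsTypicalBeta}, and then invoke the abstract Berry--Esseen and moderate-deviation consequences (Lemma \ref{lem:ConsequenceCumBounds}). The only cosmetic difference is your choice $\Delta_d=(\nu+2)\sigma_d/(\beta+2)$ versus the paper's $\Delta_d=\varepsilon_{\beta,\nu,d}^{-1}$, which differ by the harmless factor $\sqrt2$ you already account for.
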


The proof of Theorem \ref{thm:Main} relies on the following lemma, which summarizes some of the main finings of the method of cumulants for normal approximation. We refer the reader to the recent survey article \cite{DJSSurvey} and the many references provided therein. 

 \begin{lemma}\label{lem:ConsequenceCumBounds}
 	Let $(X_d)_{d\in\NN}$ be a sequence of random variables with $\EE[X_d] = 0$ and $\VV[X_d] = 1$ for all $d\in\NN$. Suppose that, for all $m\in \NN$, $m\ge 3$ and sufficiently large $n$,
 	\begin{align}\label{eq:CumBoundAbstract}
 	|c_m(X_d)| \leq \frac{(m!)^{1 + \delta}}{(\Delta_d)^{m-2}}
 	\end{align}
 	with a constant $\delta \in [0,\infty)$ not depending on $d$ and constants $\Delta_d \in (0,\infty)$ that may depend on $d$.  Then the following assertions are true.
 	\begin{itemize}
 		\item[(i)] One has the Berry-Esseen bound
\begin{align*}
	\sup\limits_{y\in \mathbb{R}} |\PP(X_d \leq y) - \Phi(y)| \leq c\, (\Delta_d)^{-1/(1+2\delta)}
\end{align*}
with a constant $c\in (0,\infty)$ that only depends on $\delta$.
 		\item[(ii)] Let $(a_d)_{d \in\NN}$ be a sequence of positive real numbers such that
 		\begin{align*}
 		\lim\limits_{d \rightarrow \infty} a_d = \infty \qquad \text{and}\qquad \lim\limits_{d \rightarrow \infty} a_d\, \Delta_d^{-\frac{1}{1 + 2\delta}} = 0.
 		\end{align*}
 		Then $(a_d^{-1} X_d)_{d\in\NN}$ satisfies a moderate deviations principle on $\mathbb{R}$ with speed  $a_d^2$ and rate function $I(x) = \frac{x^2}{2}$.
 	\end{itemize} 
 \end{lemma}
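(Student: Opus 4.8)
The plan is to recognise that the hypothesis \eqref{eq:CumBoundAbstract} is precisely the classical \emph{Statulevi\v{c}ius condition} with parameter $\delta$ and scale $\Delta_d$, so that both assertions become instances of the well-developed machinery of the method of cumulants; I would therefore structure the argument as a verification that the hypotheses of the corresponding statements collected in \cite{DJSSurvey} are met, under the identification $\gamma=\delta$ and $\Delta=\Delta_d$. The common analytic input is the \emph{fundamental lemma} of the cumulant method. Since $\EE[X_d]=0$ and $\VV[X_d]=1$ force $c_1(X_d)=0$ and $c_2(X_d)=1$, one has the formal expansion
$$
\log\EE[e^{\ii t X_d}] = -\frac{t^2}{2} + \sum_{m\geq 3}\frac{(\ii t)^m}{m!}\,c_m(X_d),
$$
and \eqref{eq:CumBoundAbstract} converts this into the quantitative estimate
$$
\Big|\log\EE[e^{\ii t X_d}] + \frac{t^2}{2}\Big| \leq c_\delta\,\frac{|t|^3}{\Delta_d}, \qquad |t|\leq c_\delta'\,\Delta_d^{1/(1+2\delta)},
$$
for suitable constants $c_\delta,c_\delta'\in(0,\infty)$ depending only on $\delta$. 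When $\delta>0$ the power series above is merely asymptotic rather than convergent, so this bound is obtained by truncating the Taylor expansion at an order depending on $t$ and controlling the remainder by \eqref{eq:CumBoundAbstract}, exactly as in the Saulis--Statulevi\v{c}ius framework.

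For part (i) I would feed this estimate into the Esseen smoothing inequality. Writing $\phi_d(t)=\EE[e^{\ii t X_d}]=e^{-t^2/2}e^{r_d(t)}$ with $|r_d(t)|\leq c_\delta|t|^3/\Delta_d$, and choosing $c_\delta'$ small enough that $c_\delta|t|/\Delta_d\leq 1/4$ throughout the admissible range, one gets the pointwise bound $|\phi_d(t)-e^{-t^2/2}|\leq c\,(|t|^3/\Delta_d)\,e^{-t^2/4}$ on $|t|\leq T$ with $T:=c_\delta'\Delta_d^{1/(1+2\delta)}$. Then Esseen's inequality gives
$$
\sup_{y\in\RR}|\PP(X_d\leq y)-\Phi(y)| \leq \frac{1}{\pi}\int_{-T}^{T}\frac{|\phi_d(t)-e^{-t^2/2}|}{|t|}\,\dint t + \frac{c}{T} \leq \frac{c}{\Delta_d} + \frac{c}{T},
$$
and since $1/\Delta_d\leq\Delta_d^{-1/(1+2\delta)}$, the dominant term is $c/T\asymp\Delta_d^{-1/(1+2\delta)}$. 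This is exactly the claimed Berry--Esseen bound, with the exponent $1/(1+2\delta)$ arising from the optimal choice of the smoothing length $T$ against the scale in \eqref{eq:CumBoundAbstract}.

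For part (ii) the route is through the refined deviation estimates of Saulis--Statulevi\v{c}ius rather than a direct G\"artner--Ellis argument, because \eqref{eq:CumBoundAbstract} with $\delta>0$ permits cumulants growing faster than geometrically, so that the moment generating function need not exist and the naive scaled cumulant generating function is unavailable. Under the Statulevi\v{c}ius condition these estimates yield, uniformly for $0\leq x=o(\Delta_d^{1/(1+2\delta)})$, a relative-error control of $\PP(X_d>x)$ against the Gaussian tail $1-\Phi(x)$, with a correction governed by the Cram\'er series that is negligible at exponential scale once $x$ is of smaller order than $\Delta_d^{1/(1+2\delta)}$. Setting $x=a_dy$ and using $a_d\to\infty$ together with $a_d\,\Delta_d^{-1/(1+2\delta)}\to 0$ keeps us strictly inside this moderate range, so the leading Gaussian term dominates and $a_d^{-2}\log\PP(a_d^{-1}X_d>y)\to -y^2/2$; the same applies to the left tail $(-\infty,-y)$ upon replacing $X_d$ by $-X_d$. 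Since on $\RR$ a large deviations principle for an exponentially tight family is determined by its half-line tail rates, these two limits upgrade to the moderate deviations principle with speed $a_d^2$ and rate function $I(x)=x^2/2$.

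The main obstacle I anticipate is not any individual inequality but the consistent bookkeeping of exponents. One must track how the parameter $\delta$ in $(m!)^{1+\delta}$ propagates through the truncation of the (divergent, when $\delta>0$) cumulant series into the radius $\Delta_d^{1/(1+2\delta)}$ of validity, and then ensure that this very radius reappears correctly as the smoothing length in part (i) and as the boundary of the moderate regime in part (ii). Given that the statement is an explicit packaging of known theorems, the cleanest execution is to cite their precise forms from \cite{DJSSurvey} after checking that \eqref{eq:CumBoundAbstract} is exactly their hypothesis under the identification $\gamma=\delta$, $\Delta=\Delta_d$.
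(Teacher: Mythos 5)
Your proposal is correct and takes essentially the same route as the paper: the paper gives no proof of this lemma at all, presenting it as a packaging of known results of the cumulant method under the Statulevi\v{c}ius condition and simply citing the survey \cite{DJSSurvey}, which is exactly the identification ($\gamma=\delta$, $\Delta=\Delta_d$) you make. Your additional sketch of the underlying mechanics (the fundamental lemma plus Esseen smoothing for (i), and the Saulis--Statulevi\v{c}ius moderate-range tail estimates for (ii)) is consistent with the standard proofs referenced there.
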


\begin{proof}[Proof of Theorem \ref{thm:Main}]
	From the estimates in Corollary \ref{cor:CumulantsTypicalBeta} we obtain
	\[
	\big|c_m\big(\widetilde Y_{\beta, \nu, d}\big)\big|=\frac{\left|c_m\left(Y_{\beta, \nu, d}\right)\right|}{\left(\VV Y_{\beta, \nu, d}\right)^{m/2}}
	\leq {36(\beta+2)\over m\sqrt{\log d}}{m!}\varepsilon_{\beta,\nu,d}^{m-2}
	\leq{m!}\,\varepsilon_{\beta,\nu,d}^{m-2},
	\]
	for sufficiently large $d$. Thus, $\widetilde Y_{\beta,\nu,d}$ satisfies condition \eqref{eq:CumBoundAbstract} with $\delta=0$ and $\Delta_d=\varepsilon_{\beta,\nu,d}^{-1}$ respectively. An application of Lemma \ref{lem:ConsequenceCumBounds} now completes the proof.
\end{proof}

\subsection{Mod-Gaussian convergence}

In this subsection we investigate mod-$\phi$ convergence of the logarithmic volume $Y_{\beta,\nu,d}$ of the random simplex $Z_{\beta,\nu}$, assuming, as at the end of the previous section, that the parameters $\beta$ and $\nu$ are fixed and the dimension $d$ tends to infinity. The notion of mod-$\phi$ convergence has been introduced and studied in the previous decade in \cite{DKN15, JKN11}. It is a powerful tool which leads to a whole collection of limit theorems including an extended version of the central limit theorem, a local limit theorem, precise moderate and large deviations and Cram\'er-Petrov type asymptotic expansions. For more references and a survey of the topic we refer the reader to \cite{ModPhiBook}. We remark that \textit{some} of the results established in the previous section will also follow once we have established mod-$\phi$ convergence.

The main idea behind the concept of mod-$\phi$ convergence of a sequence of random variables is to look for a suitable renormalization of their moment generating functions (considered on the complex plane $\mathbb{C}$). There are a several versions and we will consider the one from \cite[Definition 1.1]{ModPhiBook}. Let $(X_d)_{d\in\mathbb{N}}$ be a sequence of real-valued random variables, and let us denote by $\varphi_d(z)=\EE[e^{zX_d}]$ their moment generating functions, which are assumed to exist in a strip 
\[
S_{(a,b)}:=\left\{z\in\CC\colon a<\Re z<b\right\},
\]
where $a<0<b$ are extended real numbers. Assume, that there exists a non-constant infinitely divisible distribution $\phi$ with moment generating function $\int_{\RR}e^{zx}\phi(\dd x)=\exp(\eta(z))$, which is well defined on $S_{(a,b)}$, and an analytic function $\psi$ which does not vanish on the real part of $S_{(a,b)}$, such that 
\[
\exp\left(-w_d\eta(z)\right)\varphi_d(z)\rightarrow \psi(z),\qquad d\to\infty,
\]
locally uniformly in $z\in S_{(a,b)}$ for some sequence $w_d\to\infty$. Then we say that the sequence $(X_d)_{d\in\mathbb{N}}$ converges in the mod-$\phi$ sense on $S_{(a,b)}$ with parameters $(w_d)_{d\in\mathbb{N}}$ and limiting function $\psi$. In particular, if $\eta(z)=z^2/2$, one speaks about mod-Gaussian convergence. This will be the case for our application presented below.

Mod-$\phi$ convergence for the log-volume of different models of random simplices was recently studied in \cite{EKGammaMoments,GKT17, GusakovaThaeleDelaunay}. We remark that although \cite{EKGammaMoments} studies very general models with so-called gamma type moments, our random variables do not precisely fit into this framework. Our argument closely follows the one in \cite{GusakovaThaeleDelaunay} with suitable modifications and adaptions, of course.

Before we state the main result of this section let us recall the definition of the Barnes $G$-function. The Barnes $G$-function is an entire function of one complex argument $z\in\CC$, which can be defined as a solution of the functional equation 
\[
G(z+1)=\Gamma(z)G(z),
\]
satisfying the `initial' condition $G(1)=1$. 

\begin{theorem}[Mod-Gaussian convergence of the log-volume]\label{thm:ModPhi}
For fixed $\nu\ge -1$ and $\beta>-1$ define the sequence $(m_d)_{d\in\NN}$ by
\[
m_d:=\Big({9\over 4}+{\nu\over 2}\Big)\log\Big({d\over 2}\Big)-{d+\nu+2\beta\over 2}-1+\log\Big({\pi\over 4\gamma}\Big)-{3(\beta+1)\log d\over d+2\beta+1}-\log(d-1)!.
\] 
Then, as $d\to\infty$, the sequence of random variables $(Y_{\beta, \nu,d}-m_d)_{d\in\NN}$ converges in the mod-Gaussian sense on the strip $S_{(-\nu-1,\infty)}$ with parameters $w_d={1\over 2}\log\left({d\over 2}\right)-1$ and limiting function 
$$
\psi(z)={G\left({\nu+2\over 2}\right)G\left({\nu+3\over 2}\right)\over G\left({\nu+2+z\over 2}\right) G\left({\nu+3+z\over 2}\right)}.
$$
\end{theorem}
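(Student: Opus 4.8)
The plan is to work directly with the moment generating function and the exact moment formula of Lemma \ref{lm:VolumeMomBeta}. Since $\varphi_d(z):=\EE[e^{zY_{\beta,\nu,d}}]=\EE[\Vol(Z_{\beta,\nu})^z]$, substituting $s=z$ into Lemma \ref{lm:VolumeMomBeta} yields a closed expression for $\varphi_d$ as a finite product and ratio of Gamma functions together with the factor $\prod_{i=1}^{d-1}\Gamma(\tfrac{i+\nu+z+1}{2})/\Gamma(\tfrac{i+\nu+1}{2})$. Every argument appearing has strictly positive real part whenever $\Re z>-\nu-1$ (the most dangerous one being $\Gamma(\tfrac{\nu+z+2}{2})$, whose real part exceeds $\tfrac12$), so $\varphi_d$ extends to an analytic, non-vanishing function on the strip $S_{(-\nu-1,\infty)}$. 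By the definition of mod-Gaussian convergence it then suffices to prove that $\log\varphi_d(z)-z\,m_d-w_d z^2/2\to\log\psi(z)$ locally uniformly on $S_{(-\nu-1,\infty)}$.

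The second step is to extract the limiting function $\psi$. Writing $a=\nu+z+1$ and splitting the product according to the parity of $i$, the even indices $i=2j$ contribute $\Gamma(j+\tfrac a2)$ and the odd indices $i=2j-1$ contribute $\Gamma(j+\tfrac{a-1}{2})$. Telescoping via the functional equation $G(w+1)=\Gamma(w)G(w)$ of the Barnes function gives $\prod_{j=1}^{m}\Gamma(c+j)=G(c+m+1)/G(c+1)$, so the whole product collapses to a ratio of Barnes $G$-values. The fixed lower endpoints produce exactly the factors $G(\tfrac{\nu+z+2}{2})$ and $G(\tfrac{\nu+z+3}{2})$ in a denominator and, after forming the ratio with the $z=0$ version, the factors $G(\tfrac{\nu+2}{2})G(\tfrac{\nu+3}{2})$ in the numerator; this is precisely $\psi(z)$. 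The growing upper endpoints $G(\tfrac{\nu+z+3}{2}+\lfloor\tfrac{d-1}{2}\rfloor)$ and $G(\tfrac{\nu+z+2}{2}+\lceil\tfrac{d-1}{2}\rceil)$ are separated off to be analysed asymptotically.

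The third step is the asymptotic analysis. To the remaining Gamma factors with arguments growing in $d$ --- among them $\Gamma(\tfrac{d(d+\nu+z+2\beta)}{2}+1)$ and $\Gamma(\tfrac{d(d+2\beta)+(\nu+z)(d-1)+1}{2})$, whose arguments are quadratic in $d$, and $d\log\Gamma(\tfrac{d+\nu+z}{2}+\beta+1)$ --- I would apply Stirling's expansion \eqref{eq:AsympGamma}, expanding each $\log\Gamma(A+Bz)$ in powers of $z$ with $A=A(d)\to\infty$; to the two growing Barnes factors I would apply the classical expansion $\log G(w+1)=\tfrac{w^2}{2}\log w-\tfrac34 w^2+\tfrac w2\log(2\pi)-\tfrac{1}{12}\log w+\zeta'(-1)+o(1)$. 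Collecting the coefficient of $z^2$ isolates the parameter $w_d=\tfrac12\log(\tfrac d2)-1$ (the quadratic-in-$d$ contributions of the two big Gamma factors cancelling, the $\log d$ growth coming from the growing Barnes factors), collecting the coefficient of $z$ identifies the centring $m_d$, and the $z$-independent remainder together with the fixed Barnes factors converges to $\log\psi(z)$. Since all these expansions are uniform for $z$ in compact subsets of the strip and carry errors of order $O(1/d)$, the convergence is locally uniform, and $\psi$ is analytic and nowhere zero on $\{\Re z>-\nu-1\}$ because the Barnes $G$-function vanishes only at non-positive integers while every argument $\tfrac{\nu+2+z}{2},\tfrac{\nu+3+z}{2}$ has positive real part there.

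The main obstacle I anticipate is the exact bookkeeping in the third step: one must show that the quadratic- and linear-in-$d$ terms produced by the several Gamma factors cancel precisely and recombine into the stated $m_d$ and $w_d$, rather than merely matching to leading order. In particular the $z^2$-coefficient receives $O(1)$ contributions from the sub-leading Stirling and Barnes corrections (the $-\tfrac34 w^2$ and $-\tfrac1{12}\log w$ terms, the $\log 2$ arising from the parity splitting, and the constants hidden in the quadratic-argument Gammas), all of which must be tracked to pin down the additive constant $-1$ in $w_d$ and the constants inside $m_d$. The parity-dependent floor/ceiling split (the quantity $c=(d-1)\bmod 2$ of the earlier propositions) must also be shown to wash out in the limit. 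This is routine but delicate, and follows the template of \cite{GusakovaThaeleDelaunay} with the $\beta$-dependent arguments inserted; a useful consistency check is that differentiating the resulting log-moment generating function twice at $z=0$ must reproduce the variance asymptotics $\VV Y_{\beta,\nu,d}=\tfrac12\log d+C_\nu+O(1/d)$ of Corollary \ref{cor:CumulantsTypicalBeta}.
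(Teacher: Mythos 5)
Your proposal is correct and follows essentially the same route as the paper: the exact Gamma-product formula for $\varphi_d(z)=\EE[\Vol(Z_{\beta,\nu})^z]$ from Lemma \ref{lm:VolumeMomBeta}, identification of $\psi(z)$ from the product $\prod_{i=1}^{d-1}\Gamma(\tfrac{i+\nu+z+1}{2})/\Gamma(\tfrac{i+\nu+1}{2})$ via parity splitting and Barnes-$G$ telescoping (the paper imports this step as equation \eqref{eq:20.01.21_1} from \cite{GusakovaThaeleDelaunay} rather than re-deriving it), and asymptotic expansion of the remaining large-argument Gamma factors to extract $m_d$ and $w_d$. The only methodological difference is that the paper controls those factors with Binet's integral formula, which yields the locally uniform $O((1+|z|^3)/d)$ error on the strip directly, whereas your use of \eqref{eq:AsympGamma} (stated for real arguments) would need to be replaced by the complex-argument Stirling expansion, uniform on sectors, to justify local uniformity in $z$.
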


\begin{proof}
Consider the moment generating function 
$$
\varphi_d(z):=\EE[\exp(zY_{\beta,\nu,d})]=\EE[\Vol(Z_{\beta,\nu})^z],\qquad z\in\CC,
$$
of the random variable $Y_{\beta,\nu,d}$.
Then by Lemma \ref{lm:VolumeMomBeta} we obtain the representation
$$
\log\varphi_d(z):=z \left[{d-1\over d+2\beta+1}\log\Big({ \sqrt{\pi}\Gamma({d+1\over 2}+\beta+1)\over \gamma\Gamma({d\over 2}+\beta+1)}\Big)-\log(d-1)!\right]+S_d(z)+T_d(z),
$$
where
\begin{align*}
S_d(z):&=\log \prod\limits_{i=1}^{d-1}{\Gamma({i+\nu+s+1\over 2})\over \Gamma({i+\nu+1\over 2})},\\
T_d(z):&=-\left(\log\Gamma\Big({d(d+2\beta)+\nu(d-1)+1\over 2}+{z(d-1)\over 2}\Big)-\log\Gamma\Big({d(d+2\beta)+\nu(d-1)+1\over 2}\Big)\right)\\
&+\left(\log\Gamma\Big({d(d+\nu +2\beta)\over 2}+1+{zd\over 2}\Big)-\log\Gamma\Big({d(d+\nu +2\beta)\over 2}+1\Big)\right)\\
&+\left(\log\Gamma\Big(d+{(\nu-1)(d-1)\over d+2\beta+1}+{z(d-1)\over d+2\beta+1}\Big)-\log\Gamma\Big(d+{(\nu-1)(d-1)\over d+2\beta+1}\Big)\right)\\
&-d\left(\log\Gamma\Big({d+\nu+2\beta\over 2} +1+{z\over 2}\Big)-\log\Gamma\Big({d+\nu+2\beta\over 2} +1\Big)\right).
\end{align*}
The first term $S_d(z)$ is independent of $\beta$ and was analysed already in \cite{GusakovaThaeleDelaunay}. In fact, from the proof of Theorem 5.2 in \cite{GusakovaThaeleDelaunay} we deduce that
$$
S_d(z)=\log\psi(z)+{z\over 2}\Big((\nu+d-{1\over 2})\log\Big({d-1\over 2}\Big)-d+1+\log(2\pi)\Big)+{z^2\over 4}\log\Big({d-1\over 2}\Big)+O\Big({|z|^3+1\over d}\Big),
$$
and using the Taylor expansion of the logarithm we conclude
\begin{equation}\label{eq:20.01.21_1}
S_d(z)=\log\psi(z)+{z\over 2}\Big((\nu+d-{1\over 2})\log\Big({d\over 2}\Big)-d+\log(2\pi)\Big)+{z^2\over 4}\log\Big({d\over 2}\Big)+O\Big({|z|^3+1\over d}\Big).
\end{equation}

Let us now consider the second term $T_d(z)$. We will use the classical Binet's formula for the logarithm of Gamma function \cite[page 243]{WW}:
$$
\log\Gamma(z)=\Big(z-{1\over 2}\Big)\log z-z+{1\over 2}\log(2\pi)+\int_{0}^{\infty}{e^{-tz}\over t}\Big({1\over 2}-{1\over t}+{1\over e^t-1}\Big)\dint t,\qquad \Re z\in (0,\infty),
$$
in particular
$$
\log\Gamma(z+y)-\log\Gamma(z)=\Big(z-{1\over 2}\Big)\log \Big(1+{y\over z}\Big)+y\log(z+y)-y+\int_{0}^{\infty}{e^{-tz}(e^{-ty}-1)\over t}\Big({1\over 2}-{1\over t}+{1\over e^t-1}\Big)\dint t.
$$
Repeated application of this identity shows that $T_d(z)$ equals
\begin{align*}
&-{(d-1)(d+\nu+2\beta+1)+1+2\beta\over 2}\log\Big(1+{z(d-1)\over (d-1)(d+\nu+2\beta+1)+2+2\beta}\Big)\\
&-{z(d-1)\over 2}\log((d-1)(d+\nu+z+2\beta+1)+2+2\beta)+{z(d-1)\over 2}(1+\log 2)\\
&+{d(d+\nu +2\beta)+1\over 2}\log\Big(1+{zd\over d(d+\nu+2\beta)+2}\Big)+{zd\over 2}\log(d(d+\nu +2\beta+z)+2)-{zd\over 2}(1+\log 2)\\
&+\Big({(d-1)(\nu+d+1+2\beta)+2\beta+2\over d+2\beta+1}-{1\over 2}\Big)\log\Big(1+{z(d-1)\over (d-1)(\nu+d+1+2\beta)+2\beta+2}\Big)\\
&+{z(d-1)\over d+2\beta+1}\log\Big({(d-1)(\nu+d+1+2\beta+z)+2\beta+2\over d+2\beta+1}\Big)-{z(d-1)\over d+2\beta+1}\\
&-{d(d+\nu+2\beta+1)\over 2}\log\Big(1+{z\over d+\nu+2\beta+2}\Big)-{zd\over 2}\log(d+\nu+2\beta+z+2)+{zd\over 2}(1+\log 2)+R_d(z),
\end{align*}
where
\begin{align*}
R_d(z)&:=\int_{0}^{\infty}{1\over t}\Big({1\over 2}-{1\over t}+{1\over e^t-1}\Big)\Big(e^{-{(d(d+\nu +2\beta)+2)t\over 2}}(e^{-{zdt\over 2}}-1)+e^{-{((d-1)(\nu+d+1+2\beta)+2\beta+2)t\over d+2\beta+1}}(e^{-{z(d-1)t\over d+2\beta+1}}-1)\\
&\qquad -e^{-{((d-1)(d+\nu+2\beta+1)+2+2\beta)t\over 2}}(e^{-{z(d-1)t\over 2}}-1)-de^{-{(d+\nu+2\beta+2)t\over 2}}(e^{-{zt\over 2}}-1)\Big)\dint t.
\end{align*}
Now, we note, that the function $t\mapsto {1\over t}({1\over 2}-{1\over t}+{1\over e^t-1})$ is bounded by $1/12$ and that the inequality $|e^{-z}-1|\leq e^{-\Re z}+1$ holds for all $z\in\CC$. By the triangle inequality this leads to
\begin{align*}
|R_d(z)|&\leq{1\over 12}\int_{0}^{\infty}\Big(e^{{-(d\Re z+d(d+\nu +2\beta)+2)t\over 2}}+e^{{-(d(d+\nu +2\beta)+2)t\over 2}}+e^{{-((d-1)\Re z+(d-1)(\nu+d+1+2\beta)+2\beta+2)t\over d+2\beta+1}}\\
&\qquad+e^{{-((d-1)(\nu+d+1+2\beta)+2\beta+2)t\over d+2\beta+1}} +e^{-((d-1)\Re z+(d-1)(d+\nu+2\beta+1)+2+2\beta)t\over 2}+e^{-((d-1)(d+\nu+2\beta+1)+2+2\beta)t\over 2}\\
&\qquad+e^{-(\Re z+d+\nu+2\beta+2)t\over 2}+e^{-(d+\nu+2\beta+2)t\over 2}\Big)\dint t.
\end{align*}
Since $z\in S_{(-\nu-1,\infty)}$ we have that $\Re z>-\nu-1$, showing that all exponents in the integral above are negative. Thus,
$
R_d(z)=O\Big({1\over d}\Big).
$
Further, using the Taylor expansion for the complex logarithm 
$
\log(1+z)=z-{z^2\over 2}+O(|z|^3)
$
we simplify $T_d(z)$ as follows:
\begin{align*}
T_d(z)&={z\over 2}\Big((d-1)\log 2+{4(\beta+1)\over d+2\beta+1}\Big)+{z^2\over 4}-{z(d-1)\over 2}(\log(d-1)+\log(d+\nu+z+2\beta+1))\\
&\qquad+{zd\over 2}(\log d+\log (d+\nu +2\beta+z))-{zd\over 2}\log(d+\nu+2\beta+z+2)\\
&\qquad+{z(d-1)\over d+2\beta+1}(\log(d-1)+\log(\nu+d+1+2\beta+z)-\log(d+2\beta+1))+O\Big({1+|z|^3\over d}\Big)\\
&={z\over 2}\Big((d-1)\log 2+2\log d - {4(\beta+1)\log d\over d+2\beta+1}\Big)+{z^2\over 4}-{z(d-1)\over 2}\Big(2\log d+{\nu+2\beta+z\over d}\Big)\\
&\qquad+{zd\over 2}\Big(2\log d+{\nu +2\beta+z\over d}\Big)-{zd\over 2}\Big(\log d+{\nu+2\beta+z+2\over d}\Big)+O\Big({1+|z|^3\over d}\Big)\\
&={z\over 2}\Big(4\log d - d\log d+(d-1)\log 2-\nu-2\beta-2 -{4(\beta+1)\log d\over d+2\beta+1}\Big)-{z^2\over 4}+O\Big({1+|z|^3\over d}\Big).
\end{align*}
Next, employing the asymptotic behaviour of the logarithm of the gamma function \eqref{eq:AsympGamma} and the Taylor approximation of the logarithm we get
$$
\log\Big({\Gamma({d+1\over 2}+\beta+1)\over\Gamma({d\over 2}+\beta+1)}\Big)=\log\Big({d\over 2}\Big)+O(1/d).
$$
Finally, combining this with \eqref{eq:20.01.21_1} we obtain
\begin{equation}\label{eq:22.01.21}
\begin{aligned}
\log\varphi_d(z)&= z\Big(\Big({9\over 4}+{\nu\over 2}\Big)\log\Big({d\over 2}\Big)-{d+\nu+2\beta\over 2}-1+\log\Big({\pi\over 4\gamma}\Big)-{3(\beta+1)\log d\over d+2\beta+1}-\log(d-1)!\Big)\\
&\qquad+\log\psi(z)+{z^2\over 4}\Big(\log\Big({d\over 2}\Big)-2\Big)+O\Big({1+|z|^3\over d}\Big),
\end{aligned}
\end{equation}
which finishes the proof of the theorem.
\end{proof}

\subsection{Large deviation principle}

In this subsection we complete the investigation of the high-dimensional probabilistic limit theorems for the random variables $Y_{\beta,\nu,d}$ by establishing the large deviation principle for fixed $\beta$ and $\nu$, as $d\to\infty$.

\begin{theorem}[Large deviations for the log-volume]\label{thm:LDP}
For fixed $\nu\ge -1$ and $\beta>-1$ 
the sequence of random variables 
$$
\left({2\over \log\big({d\over 2}\big)}(Y_{\beta,\nu,d}-{\EE Y_{\beta,\nu,d}})\right)_{d\in\NN}
$$ 
satisfies a large deviations principle on $\RR$ with speed ${1\over 2}\log\big({d\over 2}\big)$ and good rate function $I(x)={x^2\over 2}$.
\end{theorem}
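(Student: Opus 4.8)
The plan is to apply the Gärtner--Ellis theorem, using the sharp control of the moment generating function furnished by the mod-Gaussian expansion \eqref{eq:22.01.21}. Write $a_d:={1\over 2}\log(d/2)$ for the proposed speed and set
$$
W_d:={2\over\log(d/2)}\big(Y_{\beta,\nu,d}-\EE Y_{\beta,\nu,d}\big)=a_d^{-1}\big(Y_{\beta,\nu,d}-\EE Y_{\beta,\nu,d}\big).
$$
The crucial observation is that the speed exactly cancels the normalisation, since $a_d\lambda W_d=\lambda\,(Y_{\beta,\nu,d}-\EE Y_{\beta,\nu,d})$ for every $\lambda\in\RR$. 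Consequently the scaled cumulant generating function is
$$
\Lambda_d(\lambda):=a_d^{-1}\log\EE\big[e^{a_d\lambda W_d}\big]=a_d^{-1}\Big(-\lambda\,\EE Y_{\beta,\nu,d}+\log\varphi_d(\lambda)\Big),
$$
where $\varphi_d(\lambda)=\EE[\Vol(Z_{\beta,\nu})^\lambda]$ is the moment generating function of $Y_{\beta,\nu,d}$, which is finite precisely on the strip appearing in Theorem \ref{thm:ModPhi}.

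Next I would insert the expansion \eqref{eq:22.01.21}, in which the coefficient of $z$ is exactly the centring constant $m_d$, together with the first-order identity $\EE Y_{\beta,\nu,d}=m_d+(\log\psi)'(0)+O(1/d)$ obtained by differentiating \eqref{eq:22.01.21} at $\lambda=0$ (equivalently, by reading off $c_1$ from Corollary \ref{cor:CumulantsTypicalBeta}). The two occurrences of the large constant $m_d$ then cancel in $-\lambda\,\EE Y_{\beta,\nu,d}+\log\varphi_d(\lambda)$, leaving
$$
-\lambda\,\EE Y_{\beta,\nu,d}+\log\varphi_d(\lambda)=\log\psi(\lambda)-\lambda\,(\log\psi)'(0)+{\lambda^2\over 4}\big(\log(d/2)-2\big)+O\Big({1+|\lambda|^3\over d}\Big).
$$
Dividing by $a_d={1\over2}\log(d/2)$ and letting $d\to\infty$, the bounded terms $\log\psi(\lambda)$ and $\lambda\,(\log\psi)'(0)$ contribute $o(1)$, the error term vanishes, and ${\lambda^2\over 4}(\log(d/2)-2)/a_d\to{\lambda^2\over 2}$. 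Hence
$$
\Lambda(\lambda):=\lim_{d\to\infty}\Lambda_d(\lambda)={\lambda^2\over 2}
$$
for every $\lambda$ in the open strip, while $\Lambda(\lambda)=+\infty$ to the left of it, where $\varphi_d$ diverges. The Legendre--Fenchel transform of $\lambda\mapsto\lambda^2/2$ is $I(x)=x^2/2$, which is lower semicontinuous with compact level sets, i.e.\ the announced good rate function.

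The remaining, and genuinely delicate, point is to verify the hypotheses of Gärtner--Ellis at the boundary of the effective domain $\{\Lambda<\infty\}$. For $\nu>-1$ the origin lies in the interior of this domain and $\Lambda$ is differentiable there, so the upper bound for closed sets and the lower bound at all exposed points follow at once; since every $x$ to the right of the left endpoint is an exposed point of $I$, this already yields the large deviation principle with rate $x^2/2$ on the upper range, which is exactly the regime consistent with the central limit and moderate deviation behaviour of Theorem \ref{thm:Main}. The main obstacle is that $\Lambda$ is \emph{not} steep at its finite left endpoint (the negative moments of $\Vol(Z_{\beta,\nu})$ cease to exist there), so the far lower deviations are not covered by the abstract theorem and must be handled separately in order to obtain the clean quadratic rate on all of $\RR$. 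I would control these by a direct estimate of $\PP\big(Y_{\beta,\nu,d}\le\EE Y_{\beta,\nu,d}+a_d x\big)$ based on the stochastic representation $\Vol(Z_{\beta,\nu})=R^{d-1}\Vol(P_{\beta,\nu})$ from Remark \ref{rem:rep_typical} and the small-volume asymptotics of Theorem \ref{thm:VolumeTails}(ii); this is the step requiring the most care, with the degenerate case $\nu=-1$ (where the origin sits on the boundary of the strip) needing an extra limiting argument in the spirit of Remark \ref{rm:BetaToPoisson}.
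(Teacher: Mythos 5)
Your main computation coincides with the paper's proof: the paper states a version of the G\"artner--Ellis theorem, observes that the logarithmic moment generating function of the rescaled variable at the point $\tfrac t2\log(d/2)$ equals $\log\varphi_d(t)-t\,\EE Y_{\beta,\nu,d}$, inserts the expansion \eqref{eq:22.01.21} together with Stirling's formula and Corollary \ref{cor:CumulantsTypicalBeta} to obtain the limit $t^2/2$, and concludes via the Legendre--Fenchel transform. Up to and including the identification of the limit $\Lambda(t)=t^2/2$ on the strip, your argument and the paper's are the same.

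The difference lies in your last paragraph, and there the proposal has a genuine gap --- one you have correctly diagnosed but not closed, and which the paper does not address at all. The G\"artner--Ellis lemma as quoted in the paper requires $D_\Lambda=\RR$. By Theorem \ref{thm:VolumeTails}(ii) the small-ball probability $\PP(\Vol(Z_{\beta,\nu})\leq a)$ is of exact order $a^{\nu+2}$, so $\varphi_d(t)=\EE\,\Vol(Z_{\beta,\nu})^t=+\infty$ for every $t\leq-(\nu+2)$; hence $D_\Lambda\subseteq(-\nu-2,\infty)$, the limit $\Lambda$ is not steep at the finite left endpoint, and the abstract theorem only delivers the upper bound with rate $\Lambda^*$ together with the lower bound at exposed points, i.e.\ the LDP with rate $x^2/2$ on $x>-(\nu+2)$. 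Your plan to recover the quadratic rate on the remaining half-line by a direct small-ball estimate cannot succeed: the Chernoff bound can never beat $\Lambda^*(x)=-(\nu+2)x-\tfrac{(\nu+2)^2}{2}$ for $x<-(\nu+2)$, and tracking the $d$-dependence of the constant in Theorem \ref{thm:VolumeTails}(ii) --- equivalently, combining the residue of the moment formula of Lemma \ref{lm:VolumeMomBeta} at its pole $s=-(\nu+2)$ with the expansion \eqref{eq:22.01.21} --- gives a matching lower bound, namely $a_d^{-1}\log\PP(W_d\leq x)\to(\nu+2)x+\tfrac{(\nu+2)^2}{2}=-\Lambda^*(x)$, which is strictly larger than $-x^2/2$ there. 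So the step you flag as ``requiring the most care'' is not merely delicate: carried out, it yields the affine continuation of the rate function rather than the quadratic one, and the LDP upper bound with $I(x)=x^2/2$ fails for closed half-lines $(-\infty,x]$ with $x<-(\nu+2)$. In other words, your analysis exposes that the paper's own proof invokes its G\"artner--Ellis lemma outside that lemma's hypotheses; the statement is only safe with the good rate function $\Lambda^*$ (equal to $x^2/2$ on $[-(\nu+2),\infty)$ and affine below), or with the deviations restricted to the range $x>-(\nu+2)$.
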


Our proof of this result will rely on the G\"artner-Ellis theorem, see \cite[Section 2.3]{DZ}. Although this is a standard tool in the large deviations theory, we reformulate a version of it in order to keep our presentation self-contained.

\begin{lemma}[G\"artner-Ellis theorem]
Consider a sequence of random variables $(X_d)_{d\in\mathbb{N}}$ in $\RR$ with logarithmic moment generating functions $\Lambda_d(t):=\log\EE e^{tX_d}$, $t\in\RR$. Let $(a_d)_{d\in\mathbb{N}}$ be a positive sequence such that $a_d\to\infty$, as $d\to\infty$. Assume that for each $t\in\RR$ the limit
\[
\Lambda(t):=\lim_{d\rightarrow\infty}{1\over a_d}\Lambda_d(a_dt),
\]
exists as an extended real number. Also assume that $D_\Lambda:=\{t\in\RR:\Lambda(t)<\infty\}=\RR$ and that $\Lambda$ is differentiable on $D_\Lambda$. Then the sequence of random variables $X_d$ satisfies large deviations principle with speed $a_d$ and rate function $I(x)=\sup\limits_{t\in\RR}[xt-\Lambda(t)]$, the Legendre-Fenchel transform of $\Lambda$.
\end{lemma}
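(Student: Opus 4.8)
The plan is to give the classical proof of the Gärtner--Ellis theorem, simplified by the standing hypotheses that $\Lambda$ is finite and differentiable on all of $\RR$, which let me avoid any discussion of steepness or of non-exposed points. I would first record the structural properties of $\Lambda$. Each $\Lambda_d$ is convex in $t$ by Hölder's inequality, hence so is $t\mapsto a_d^{-1}\Lambda_d(a_dt)$ and therefore so is its pointwise limit $\Lambda$; being in addition finite and differentiable, $\Lambda$ is of class $C^1$, and $\Lambda(0)=0$. It follows that $I=\Lambda^*$ is a non-negative, lower semicontinuous, convex function with $\inf_x I(x)=-\Lambda(0)=0$. Since $\Lambda$ is finite on all of $\RR$, for every $r>0$ one has $I(x)\ge r|x|-\max(\Lambda(r),\Lambda(-r))$, so $I$ grows superlinearly and hence has compact level sets; thus $I$ is a good rate function.

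Next I would prove the upper bound. For $t\ge0$, Markov's inequality gives $\PP(X_d\ge x)\le e^{-a_dtx}\,\EE e^{a_dtX_d}$, whence $\limsup_{d\to\infty}a_d^{-1}\log\PP(X_d\ge x)\le -(tx-\Lambda(t))$; optimizing over $t\ge0$, and symmetrically over $t\le0$ for the left half-line, bounds the two half-line probabilities by $-I(x)$. Covering a compact set by finitely many such half-lines yields the upper bound on compacts. Evaluating the same Markov estimate at $t=\pm1$ and using $\Lambda(\pm1)<\infty$ gives exponential tightness, $\lim_{L\to\infty}\limsup_{d\to\infty}a_d^{-1}\log\PP(|X_d|\ge L)=-\infty$, which promotes the compact upper bound to all closed sets.

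The lower bound is the step I expect to be the crux. It suffices to show $\liminf_{d\to\infty}a_d^{-1}\log\PP(|X_d-x|<\delta)\ge -I(x)$ for each $x$ and each $\delta>0$. If $I(x)=\infty$ there is nothing to prove; otherwise, when $x=\Lambda'(\tau)$ for some $\tau\in\RR$, we have $I(x)=\tau x-\Lambda(\tau)$. Introduce the exponentially tilted measures $\widetilde\PP_d(\,\cdot\,):=\EE[e^{a_d\tau X_d}\mathbf 1_{X_d\in\,\cdot\,}]/\EE e^{a_d\tau X_d}$. Undoing the tilt on the event $\{|X_d-x|<\delta\}$ gives $\PP(|X_d-x|<\delta)\ge e^{\Lambda_d(a_d\tau)-a_d\tau x-a_d|\tau|\delta}\,\widetilde\PP_d(|X_d-x|<\delta)$. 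The key point is that under $\widetilde\PP_d$ the variable $X_d$ concentrates at $x$: its normalized cumulant generating function converges to $s\mapsto\Lambda(\tau+s)-\Lambda(\tau)$, whose derivative at $s=0$ is $\Lambda'(\tau)=x$, so applying the upper bound already established to the tilted sequence shows $\widetilde\PP_d(|X_d-x|\ge\delta)\to0$ and hence $a_d^{-1}\log\widetilde\PP_d(|X_d-x|<\delta)\to0$. Combining this with $a_d^{-1}\Lambda_d(a_d\tau)\to\Lambda(\tau)$ and letting $\delta\downarrow0$ yields the claimed bound $-I(x)$.

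Finally, for an open set $G$ one passes to the supremum of the local lower bounds over $x\in G$: because $\Lambda$ is $C^1$, every point of the interior of the effective domain of $I$ equals $\Lambda'(\tau)$ for some $\tau$ and is thus covered by the previous paragraph, while lower semicontinuity of $I$ handles the at most two boundary points by approximation, so that $\liminf_{d\to\infty}a_d^{-1}\log\PP(X_d\in G)\ge-\inf_{x\in G}I(x)$. Together with the closed-set upper bound this is precisely the large deviations principle with speed $a_d$ and good rate function $I=\Lambda^*$. The only genuinely delicate ingredient is the lower bound, where the change of measure and the concentration of the tilted laws must be justified; the differentiability assumption is exactly what makes every relevant $x$ an exposed point and removes the obstructions present in the general Gärtner--Ellis statement.
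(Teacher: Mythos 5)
Your proof is essentially correct, but note that the paper does not prove this lemma at all: it is stated as a known tool, quoted (in a simplified one-dimensional form) from Dembo and Zeitouni \cite[Section 2.3]{DZ}, precisely so that the authors can apply it to $Y_{\beta,\nu,d}$ without reproving it. What you have written is therefore a self-contained substitute for that citation, and it follows the classical route faithfully: convexity and goodness of $I=\Lambda^*$ from finiteness of $\Lambda$ on all of $\RR$; the Chernoff/half-line upper bound plus exponential tightness (which in one dimension, using that $I$ is convex and hence monotone on either side of its minimiser $\Lambda'(0)$, already gives the bound for all closed sets); and the exponential tilting argument for the lower bound, where the hypothesis that $\Lambda$ is differentiable on $D_\Lambda=\RR$ guarantees that every point in the interior of the effective domain of $I$ is of the form $\Lambda'(\tau)$, so the exposed-point difficulties of the general theorem disappear. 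Two small points of precision: the concentration of the tilted laws should be justified by noting that $\inf_{|y-x|\ge\delta}\widetilde I(y)>0$ because $\widetilde I$ is a good rate function vanishing only at $x=\Lambda'(\tau)$ (compactness of level sets is what turns pointwise positivity into a positive infimum); and at the at most two finite endpoints of $\mathrm{dom}(I)$ the approximation from inside uses that a finite convex lower semicontinuous function is in fact \emph{continuous} relative to its domain (the inequality $\lim_{x\uparrow\beta}I(x)\le I(\beta)$ comes from convexity, not from lower semicontinuity). Neither issue is a gap; with these remarks your argument is complete and would serve as a valid proof of the lemma the paper takes on faith.
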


\begin{proof}[Proof of Theorem \ref{thm:LDP}]
As in the proof of Theorem \ref{thm:ModPhi} we denote by $\varphi_d(z)$ the moment generating function of the random variable $Y_{\beta,\nu,d}$. Then the moment generating function $\widetilde{\varphi}_d(z)$ of the random variable ${2\over \log({d\over 2})}(Y_{\beta,\nu,d}-\EE Y_{\beta,\nu,d})$ satisfies
$$
\log\widetilde{\varphi}_d\Big({t\over 2} \log\Big({d\over 2}\Big)\Big)=\log \varphi_d(t)-z\EE Y_{\beta,\nu,d}.
$$
Using the asymptotic representation \eqref{eq:22.01.21} for the function $\log\varphi_d(t)$ together with Stirling's formula \eqref{eq:Stirling} and Corollary \ref{cor:CumulantsTypicalBeta} we conclude
$$
\lim\limits_{d\to\infty}{2\over \log({d\over 2})}\log\widetilde{\varphi}_d\Big({t\over 2} \log\Big({d\over 2}\Big)\Big)={t^2\over 2}.
$$
Since the Legendre-Fenchel transform  $I(x)$ of the function $t\mapsto t^2/2$ is $x^2/2$ the claim follows from the G{\"a}rtner-Ellis theorem.
\end{proof}

\begin{remark}
The similar theorem has been obtained in \cite[Theorem 5.3]{GusakovaThaeleDelaunay} for the case when $\beta=-1$ or, in other words, for the $\nu$-weighted typical cell of Poisson-Delaunay tessellation. We would like to mention here that the formulation of Theorem 5.3 in \cite{GusakovaThaeleDelaunay} contains a typo. In fact, the rescaling by the factor ${2/\log({n/2})}$ is missing, although it is present in the proof.
\end{remark}

\subsection*{Acknowledgement}
ZK was supported by the DFG under Germany's Excellence Strategy  EXC 2044 -- 390685587, \textit{Mathematics M\"unster: Dynamics - Geometry - Structure}. CT and ZK were supported by the DFG via the priority program \textit{Random Geometric Systems}.

\bibliographystyle{acm}

\end{document}